\newcommand{\p}{\medskip \noindent}
\newcommand{\PP}{\mathscr{P}}
\newcommand{\cc}{{^\frown}}
\newcommand{\till}{{\upharpoonright}}
\newcommand{\thru}{\uparrow}
\newcommand{\gen}{{\rm gen}}
\newcommand{\G}{\boldsymbol{\Gamma}}
\newcommand{\SIGMA}{\boldsymbol{\Sigma}}
\newcommand{\DELTA}{\boldsymbol{\Delta}}
\newcommand{\PI}{\boldsymbol{\Pi}}
\newcommand{\Split}{{\rm Split}}
\newcommand{\ww}{\omega^\omega}
\newcommand{\dw}{2^\omega}
\newcommand{\kk}{\kappa^\kappa}
\newcommand{\klk}{\kappa^{<\kappa}}
\newcommand{\dk}{2^\kappa}
\newcommand{\dlk}{2^{<\kappa}}
\newcommand{\kuk}{\kappa_{\uparrow}^{\kappa}}
\newcommand{\kulk}{\kappa_{\uparrow}^{{<}\kappa}}
\newcommand{\sig}{\boldsymbol{{\sigma}}}
\newcommand{\quine}[1]{\ulcorner{#1}\urcorner}
\newcommand{\last}{{\rm last}}
\newcommand{\height}{{\rm height}}
\newcommand{\Term}{{\rm Term}}
\newcommand{\stem}{{\rm stem}}
\newcommand{\Succ}{{\rm Succ}}
\newcommand{\IP}{\mathbb{P}}
\newcommand{\IQ}{\mathbb{Q}}
\newcommand{\ran}{{\rm ran}}
\newcommand{\s}[1]{\medskip \noindent \textbf{#1}}
\newcommand{\N}{\mathcal{N}}
\newcommand{\IS}{\mathbb{S}}
\newcommand{\IL}{\mathbb{L}}
\newcommand{\IM}{\mathbb{M}}
\newcommand{\IR}{\mathbb{R}}
\newcommand{\IV}{\mathbb{V}}
\newcommand{\IC}{\mathbb{C}}
\newtheorem{Thm}{Theorem}[section]
\newtheorem{Lem}[Thm]{Lemma}
\newtheorem{SubLem}[Thm]{Sublemma}
\newtheorem{Cor}[Thm]{Corollary}
\newtheorem{Question}[Thm]{Question}
\newtheorem{Fact}[Thm]{Fact}
\theoremstyle{definition}
\newtheorem{Def}[Thm]{Definition}
\newtheorem{Example}[Thm]{Example}
\newtheorem{Notation}[Thm]{Notation}
\newtheorem{Remark}[Thm]{Remark}
\newtheorem{Observation}[Thm]{Observation}
\newcommand{\I}{\mathcal{I}}
\begin{document}

\begin{frontmatter}

\title{Regularity Properties on the Generalized Reals.}

\author{Sy David Friedman\fnref{sy}}
\ead{sdf@logic.univie.ac.at}
\address{Kurt G\"odel Research Center for Mathematical Logic, University of Vienna, \\ W\"ahringer Stra\ss e 25, 1090 Wien, Austria}

\author{Yurii Khomskii\fnref{yurii}\corref{a}}
\ead{yurii@deds.nl}
\address{Kurt G\"odel Research Center for Mathematical Logic, University of Vienna, \\ W\"ahringer Stra\ss e 25, 1090 Wien, Austria}
\cortext[a]{Corresponding author}

\author{Vadim Kulikov\fnref{vadim}}
\ead{vadim.kulikov@iki.fi}
\address{Kurt G\"odel Research Center for Mathematical Logic, University of Vienna, \\ W\"ahringer Stra\ss e 25, 1090 Wien, Austria}

\fntext[sy]{Supported by the Austrian Science Fund (FWF) under project numbers P23316 and P24654.}
\fntext[yurii]{Supported by the Austrian Science Fund (FWF) under project number P23316.}
\fntext[vadim]{Supported by the Austrian Science Fund (FWF) under project number P24654.}

\begin{abstract} We investigate regularity properties derived from tree-like forcing notions in the setting of ``generalized descriptive set theory'', i.e., descriptive set theory on $\kk$ and $\dk$, for regular uncountable cardinals $\kappa$.
\end{abstract}

\begin{keyword}
Generalized Baire spaces, regularity properties, descriptive set theory.
\MSC 03E15 \sep 03E40 \sep 03E30 \sep 03E05 
\end{keyword}

\end{frontmatter}

\section{Introduction}

\emph{Generalized Descriptive Set Theory} is an area of research  dealing with generalizations of  classical descriptive set theory on the Baire space $\ww$ and Cantor space $\dw$, to the \emph{generalized Baire space} $\kk$ and the \emph{generalized Cantor space} $\dk$, where $\kappa$ is an uncountable regular cardinal satisfying $\kappa^{<\kappa} = \kappa$. Some of the earlier papers dealing with descriptive set theory on $(\omega_1)^{\omega_1}$ were motivated by model-theoretic concerns, see e.g. \cite{VaananenTrees} and \cite[Chapter 9.6]{VaananenModelsGames}. More recently, generalized descriptive set theory became a field of interest in itself, with various aspects being studied for their own sake, as well as for their applications to different fields of set theory. 

This paper is a first systematic study of \emph{regularity properties} for subsets of generalized Baire spaces. We will focus on regularity properties derived from tree-like forcing partial orders, using the framework introduced by Ikegami in \cite{Ik10} (see Definition \ref{arboreal}) as a generalization of the Baire property, as well as a number of other standard regularity properties (Lebesgue measurability, Ramsey property, Sacks property etc.)  In the classical setting, such  properties have been studied by many people, see, e.g.,  \cite{JSDelta12, BrLo99, BrHaLo, KhomskiiThesis}. Typically, these properties are satisfied by analytic sets, while the Axiom of Choice can be used to provide counterexamples. On the second projective level one obtains independence results, as witnessed by ``Solovay-style'' characterization theorems, such as the following:

\begin{Thm}[Solovay \cite{SolovayAModel}] All $\SIGMA^1_2$ sets have the Baire property if and only if for every $r \in \ww$ there are co-meager many Cohen reals over $L[r]$. \end{Thm}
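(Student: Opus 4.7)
The theorem is a biconditional; I plan to handle each direction separately, with Shoenfield absoluteness for $\SIGMA^1_2$ statements and the translation-homogeneity of Cohen forcing on $\ww$ as the main tools.

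For the sufficiency direction ($\Leftarrow$), fix $A \in \SIGMA^1_2$ with parameter $r \in \ww$, say $A = \{x : \exists y\, \varphi(x, y, r)\}$ with $\varphi \in \PI^1_1$. Working inside $L[r]$, form the set of conditions
\[
U^* = \{p \in \wlw : p \Vdash^{L[r]}_{\IC} \exists y\, \varphi(\xG, y, \check r)\},
\]
where $\IC$ is Cohen forcing and $\xG$ is the canonical name for the generic real, and set $U = \bigcup_{p \in U^*}[p]$. For any real $x$ that is Cohen-generic over $L[r]$, Shoenfield absoluteness yields $x \in A \iff L[r][x] \models \exists y\, \varphi(x, y, r) \iff x$ extends some $p \in U^* \iff x \in U$. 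Hence $A \triangle U$ is contained in the set of non-Cohen reals over $L[r]$, which is meager by hypothesis, so $A$ has the Baire property witnessed by $U$.

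For the necessity direction ($\Rightarrow$), fix $r$ and let $N_r$ be the set of reals that fail to be Cohen-generic over $L[r]$. I would first verify that $N_r$ is $\SIGMA^1_2(r)$ by writing membership as ``there exists $c \in L[r]$ coding a meager Borel set with $x \in B_c$'' and absorbing the quantifier ``$c \in L[r]$'' via Shoenfield. The hypothesis gives $N_r$ the Baire property. Since the shift homeomorphisms $\sigma_s(x) = s \cc x$ preserve Cohen-genericity over $L[r]$, the set $N_r$ satisfies $\sigma_s^{-1}(N_r \cap [s]) = N_r$ for every $s \in \wlw$; combining this invariance with the Baire property forces $N_r$ to be either globally meager -- the desired conclusion -- or globally comeager on $\ww$.

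The main obstacle is ruling out the comeager case. If $N_r$ were comeager, I would first reduce to $\omega_1^{L[r]} = \omega_1$: otherwise $L[r] \cap \ww$ is countable and $N_r$ is a countable union of meager Borel sets, hence meager. Given $\omega_1^{L[r]} = \omega_1$, the canonical $\SIGMA^1_2(r)$ wellordering of $L[r] \cap \ww$ has length $\omega_1$, and one can enumerate the $L[r]$-coded meager Borel sets accordingly. By transfinite recursion along this wellordering, picking at each stage a real outside all earlier meager sets, I would build a $\SIGMA^1_2(r)$ Luzin-type set $X$ meeting each $L[r]$-coded meager Borel set in only countably many points. A standard argument shows such $X$ cannot have the Baire property: meagerness forces $X$ countable, while comeagerness in some $[s]$ would produce a perfect (hence uncountable, and arranged to be coded in $L[r]$) subset of $X$, contradicting the Luzin condition. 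This contradicts our hypothesis and completes the proof; the technical heart is this transfinite construction together with the complexity bookkeeping keeping $X$ in $\SIGMA^1_2(r)$.
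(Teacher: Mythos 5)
Your ($\Leftarrow$) direction is the standard Solovay argument and is correct: on the comeager set of Cohen reals over $L[r]$, membership in $A$ coincides with the open set $U$ by Shoenfield absoluteness plus the forcing theorem. (The paper itself only quotes this theorem from Solovay without proof, so the comparison below is with the standard classical argument.)

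The ($\Rightarrow$) direction has a genuine gap at the final step. For your Luzin-type set $X$ to be $\SIGMA^1_2(r)$, the reals $x_\alpha$ must be chosen canonically via the $L[r]$-wellordering, hence $X \subseteq \ww \cap L[r]$, and $X$ is Luzin only \emph{relative to meager Borel sets coded in} $L[r]$. But your own hypothesis already implies (via the $\DELTA^1_2$ case, i.e., Theorem \ref{judah}) that there is a Cohen real over $L[r]$, whence $\ww \cap L[r]$ is meager in $V$. So $X$ is meager, trivially has the Baire property, and no contradiction results: the meager Borel set covering $X$ is coded in $V$ but not in $L[r]$, so the relativized Luzin property is not violated. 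The other horn fails for the same reason --- the nowhere dense perfect subset extracted from ``$X$ comeager in $[s]$'' lives in $V$ and there is no way to ``arrange'' it to be coded in $L[r]$. In short, a set that only evades the $L[r]$-coded meager sets cannot be shown non-BP by the classical Luzin argument, which needs evasion of \emph{all} meager sets.

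The standard repair is more direct and does not use Luzin sets. From $\SIGMA^1_2(\mathrm{BP})$ one gets $\DELTA^1_2(\mathrm{BP})$, hence by the (relativized) Judah--Shelah theorem a Cohen real over $L[z]$ for \emph{every} real $z$. Given any meager Borel set $F$ with code $z$ and any $s \in \wlw$, a Cohen real $c$ over $L[r \oplus z \oplus s]$ yields $s \cc c \in [s]$ which is Cohen over $L[r]$ and avoids $F$; hence the set $C_r$ of Cohen reals over $L[r]$ is non-meager in every basic open set. Since $N_r = \ww \setminus C_r$ is $\SIGMA^1_2(r)$ and has the Baire property, it cannot be comeager in any $[s]$, so it is meager. (Your reduction to $\omega_1^{L[r]} = \omega_1$ and your homogeneity/0--1 law observations are correct but are not needed once this route is taken; the real content you are missing is the $\DELTA^1_2$ theorem, whose proof uses the Kuratowski--Ulam theorem and a parity argument along the $\SIGMA^1_2$-good wellorder of $L[r]$.)
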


\begin{Thm}[Judah-Shelah \cite{JSDelta12}] \label{judah} All $\DELTA^1_2$ sets have the Baire property if and only if for every $r \in \ww$ there is a Cohen real over $L[r]$. \end{Thm}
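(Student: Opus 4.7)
The plan is to handle the two directions of the equivalence separately, following the Solovay template that relates regularity of a projective pointclass to a form of generic absoluteness over the inner models $L[r]$.

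For the backward direction $(\Leftarrow)$, assume a Cohen real over $L[r]$ exists for every $r \in \ww$, and fix $A \in \DELTA^1_2(r)$, say $A = \{x : \phi(x)\}$ for some $\SIGMA^1_2(r)$ formula $\phi$. Let $\IC$ denote Cohen forcing in $L[r]$ with canonical name $\dot c$, and set $U = \bigcup\{[p] : p \Vdash^{L[r]}_{\IC} \phi(\dot c)\}$. Since the set of conditions deciding $\phi(\dot c)$ is dense in $\IC^{L[r]}$, and Shoenfield absoluteness transfers the decision from $L[r][c]$ to $V$, every Cohen real over $L[r]$ lies in $A$ iff it lies in $U$. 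Combined with the classical Judah--Shelah dichotomy---that the $\PI^1_2(r)$ set of Cohen reals over $L[r]$ is either empty or comeager, hence comeager under our hypothesis---this forces $A \triangle U$ to be meager, witnessing BP for $A$.

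For the forward direction $(\Rightarrow)$, assume $\DELTA^1_2$-BP and, toward a contradiction, suppose no Cohen real over $L[r]$ exists for some $r$. Then every real lies in some Borel meager set coded in $L[r]$. Using the canonical $\SIGMA^1_2(r)$ good well-ordering $<_{L[r]}$ of $L[r] \cap \ww$, let $\alpha(x)$ be the least ordinal such that $x$ lies in the $\alpha$-th meager Borel set enumerated by $<_{L[r]}$. Totality of $\alpha$ yields a $\DELTA^1_2(r)$ prewellordering $\preceq$ on $\ww$ of length exactly $\omega_1$ (any strictly smaller length would express $\ww$ as a countable union of meager sets). A Bernstein-style transfinite recursion along $\preceq$ then produces a $\DELTA^1_2(r)$ set $X$ failing BP: one enumerates along $\preceq$ all candidate witnesses $(U,M)$ of the form ``open set modulo meager Borel code'' for a putative BP decomposition of $X$, and at each stage places the $\preceq$-minimal still-available real on the side that defeats the current candidate.

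The main obstacle I expect sits on both sides. In the backward direction, the Judah--Shelah dichotomy---that non-emptiness of Cohen reals over $L[r]$ implies the complementary set of non-Cohen reals is meager---is genuinely non-trivial; its standard proof uses a Kuratowski--Ulam/Fubini-style absoluteness argument or an amoeba-for-category construction, and is really the key lemma doing the work. In the forward direction, the delicate point is arranging that the Bernstein-like set $X$ admits symmetric $\SIGMA^1_2(r)$ definitions for both membership and non-membership: this demands that the recursion be encoded in a $\DELTA^1_2$ fashion through $\preceq$, so that each of ``$x \in X$'' and ``$x \notin X$'' is expressible as ``at the $\preceq$-stage where $x$ is first considered, the recursion places $x$ on the positive (respectively, negative) side''.
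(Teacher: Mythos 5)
First, note that the paper does not prove this statement at all: it is quoted as a known classical result of Judah--Shelah \cite{JSDelta12}, stated only as motivation, so your proof can only be judged on its own merits. Your forward direction is the standard route (no Cohen real over $L[r]$ forces every real into a meager Borel set coded in $L[r]$; the $\SIGMA^1_2$-good well-order then yields a $\DELTA^1_2$ rank function and a non-regular $\DELTA^1_2$ set), although the cleanest execution is not a Bernstein recursion but the prewellordering itself: the relation $\{(x,y) : \alpha(x)\leq\alpha(y)\}$ is $\DELTA^1_2(r)$, has all vertical sections comeager and all horizontal sections meager, and so violates Kuratowski--Ulam if it had the Baire property.

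The backward direction, however, rests on a false lemma. The ``dichotomy'' you invoke --- that the $\PI^1_2(r)$ set of Cohen reals over $L[r]$ is either empty or comeager --- is not a theorem of ZFC, and it is not what Judah and Shelah proved. If it were true, then combined with Solovay's characterization (Theorem 1.1 of this paper) it would make $\DELTA^1_2(\text{Baire})$ and $\SIGMA^1_2(\text{Baire})$ provably equivalent; in fact they are separated already by the plain Cohen extension $L[G]$ adding $\aleph_2$ Cohen reals over $L$: there every $L[r]$ acquires a Cohen real (so $\DELTA^1_2(\text{Baire})$ holds), but since Cohen forcing adds no dominating reals, the $\aleph_1$ meager Borel sets coded in $L$ admit no common meager superset (Bartoszy\'nski's chopped-real analysis of $\cof(\mathcal{M})$), so their union is non-meager and the set of Cohen reals over $L$ is non-empty yet not comeager (indeed, being a tail set that is neither meager nor comeager, it lacks the Baire property). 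The correct proof of $(\Leftarrow)$ therefore cannot pass through comeagerness of the generics; it works with the $\PI^1_1$ matrices of the two $\SIGMA^1_2$ definitions, uses Shoenfield to find witnesses inside $L[r][x]$, applies the hypothesis to $L[r][w]$ for auxiliary reals $w$ (not just to $L[r]$), and shows that the symmetric difference of $A$ with the forcing-theoretic open set $U=\bigcup\{[t] : t\Vdash\phi(\dot c)\}$ is meager by a Kuratowski--Ulam/unfolding argument. As written, your $(\Leftarrow)$ direction does not go through.
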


These types of theorems make it possible to study the relationships between different regularity properties on the second level. Far less is known for higher projective levels, although some results exist in the presence of large cardinals (see \cite[Section 5]{Ik10}) and some other results can be found in \cite[Chapter 9]{BaJu95} and in the recent works \cite{FriedmanSchrittesser, CichonPaper}. Solovay's model \cite{SolovayAModel} provides a uniform way of establishing regularity properties for all projective sets, starting from ZFC with an inaccessible.

\bigskip When attempting to generalize descriptive set theory from $\ww$ to $\kk$ for a regular uncountable $\kappa$, at first many basic results remain intact after a straightforward replacement of $\omega$ by $\kappa$. But, before long, one starts to notice fundamental differences: for example, the generalized  $\DELTA^1_1$ sets are not the same as the generalized Borel sets; absoluteness theorems, such as $\Sigma^1_1$- and Shoenfield absoluteness, are not valid; and in the constructible universe $L$, there is a $\SIGMA^1_1$-good well-order of $\kk$, as opposed to merely a $\SIGMA^1_2$-good well-order in the standard setting (see Section \ref{2} for details). Not surprisingly, regularity properties also behave radically different in the generalized context. Halko and Shelah \cite{HalkoShelah} first noticed that on $\dk$, the generalized  Baire property provably fails for $\SIGMA^1_1$ sets. On the other hand, it holds for the generalized Borel sets, and is independent for generalized $\DELTA^1_1$ sets. This suggests that some of the classical theory on the  $\SIGMA^1_2$ and $\DELTA^1_2$ level corresponds to the  $\DELTA^1_1$ level in the generalized setting. 

It should be noted that other kinds of regularity properties have been considered before, sometimes leading to different patterns in terms of consistency of projective regularity. For example, in \cite{SchlichtPSP} Schlicht shows that it is consistent relative to an inaccessible that a version of the perfect set property  holds for all generalized projective sets. By  \cite{LaguzziGeneral}, as well as recent results of Laguzzi and the first author, similar results hold for suitable modifications of the properties studied here.. 

\bigskip This paper is structured as follows:  Section \ref{2}  will  be devoted to a brief survey of facts about the ``generalized reals''. In Section \ref{3} we introduce an abstract notion of regularity and prove that, under certain assumption, the following results hold: \begin{enumerate} 

\item Borel sets are ``regular''.
\item Not all analytic sets are ``regular''.
\item For $\DELTA^1_1$ sets, the answer is independent of ZFC. \end{enumerate} In Section \ref{4} we focus on some concrete examples on the $\DELTA^1_1$-level and generalize some classical results from the $\DELTA^1_2$-level.  Section \ref{5} ends with a number of open questions.

\section{Generalized Baire spaces} \label{2}

We devote this section to a survey of facts about  $\kk$ and $\dk$  which will be needed in the rest of the paper, as well as specifying some definitions and conventions. None of the results here are new, though some are not widely known or have not been sufficiently documented.

\begin{Notation} $\klk$ denotes the set of all functions from $\alpha$ to $\kappa$ for some $\alpha<\kappa$, similarly for $\dlk$. We use standard notation concerning sequences, e.g., for $s,t \in \klk$ we use $s \cc t$ to denote the \emph{concatenation} of $s$ and $t$, $s \subseteq t$ to denote that \emph{$s$ is an initial segment of $t$} etc. $\kuk$ denotes the set of  strictly increasing functions from $\kappa$ to $\kappa$, and $\kulk$ the set of strictly increasing functions from $\alpha$ to $\kappa$ for some $\alpha<\kappa$. Also, we will frequently refer  to elements of $\kk$ or $\dk$ as ``$\kappa$-reals'' or ``generalized reals''.
\end{Notation}

\medskip
\subsection{Topology}  We always assume that $\kappa$ is an uncountable, regular cardinal, and that $\klk = \kappa$ holds.  The \emph{standard topology} on $\kk$   is the one generated by basic open sets of the form $[s] := \{x \in \kk \mid  s \subseteq x\}$, for $s \in \klk$; similarly for  $\dk$. Many elementary facts from the classical setting have straightforward generalizations to the generalized setting. The concepts \emph{nowhere dense} and \emph{meager} are defined as usual, and a set $A$ has the \emph{Baire property} if and only if $A \triangle O$ is meager for some open $O$. The following classical results are true regardless of the value of $\kappa$: \begin{itemize}
\item \emph{Baire category theorem}: the intersection of $\kappa$-many open dense sets is dense.
\item \emph{Kuratowski-Ulam theorem} (also called \emph{Fubini for category}): if $A \subseteq \kk \times \kk$ has the Baire property then $A$ is meager if and only if  $\{x \mid A_x$ is meager$\}$ is comeager, where $A_x := \{y \mid (x,y) \in A\}$.\end{itemize}

 
 \begin{Def} A \emph{tree} is a subset of $\klk$ or $\dlk$  closed under initial segments. For a node $t \in T$, we write $\Succ_T(t) := \{s \in T \mid s = t \cc \left<\alpha\right>$ for some $\alpha\}$. A node $t \in T$ is called \begin{itemize}
  \item \emph{terminal} if $\Succ_T(t) = \varnothing$,
  \item \emph{splitting} if $|\Succ_T(t)| > 1$, and
  \item \emph{club-splitting} if $\{\alpha \mid t \cc \left<\alpha\right> \in T)\}$ is a club in $\kappa$. \end{itemize}
We use the notation $\Split(T)$ to refer to the set of all splitting nodes of $T$.

\p A  $t \in T$ is called a \emph{successor node} if $|t|$ is a successor ordinal and a \emph{limit node} if $|t|$ is a limit ordinal. A tree is \emph{pruned} if it has no terminal nodes, and  \emph{${<}\kappa$-closed} if for every increasing sequence $\{s_i \mid i <\lambda\}$  of  nodes from $T$, for $\lambda<\kappa$, the limit $\bigcup_{i <\lambda}s_i$ is also a node of $T$.  \end{Def}

Notice that concepts such as \emph{club-splitting}, \emph{successor} and \emph{limit} node, and \emph{${<}\kappa$-closed} are inherent to the generalized setting and have no classical counterpart. Most of the trees we consider will be pruned and ${<}\kappa$-closed.

A  \emph{branch through} $T$ is a $\kappa$-real $x \in \kk$ or $\dk$ such that $\forall \alpha \:(x \till \alpha \in T)$, and $[T]$ denotes the set of all branches through $T$. As usual, $[T]$ is topologically closed and every closed set has the form $[T]$ for some (without loss of generality pruned and ${<}\kappa$-closed) tree $T$.

The Borel and projective hierarchies are defined in analogy to the classical situation: the {Borel sets} form the smallest collection of subsets of $\kk$ or $\dk$ containing the basic open sets and closed under complements and $\kappa$-unions. A set is $\SIGMA^1_1$ iff it is the projection of a closed (equivalently: Borel) set; it is $\PI^1_n$ iff its complement is $\SIGMA^1_n$; and it is $\SIGMA^1_{n+1}$ iff it is the projection of a $\PI^1_n$ set, for $n \geq 1$. It is $\DELTA^1_n$ iff it is both $\SIGMA^1_n$ and $\PI^1_n$, and \emph{projective} iff it is $\SIGMA^1_n$ or $\PI^1_n$ for some $n \in \omega$.


In spite of the close similarity of the above notions to the classical ones, there are also fundamental differences:

\begin{Fact} $\mathsf{Borel} \neq \DELTA^1_1$. \end{Fact}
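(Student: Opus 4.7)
Plan: My approach is to exhibit a $\DELTA^1_1$ subset of $\dk$ that fails to be Borel, exploiting the fact that the Borel hierarchy on $\dk$ is strict of length $\kappa^+$, while the $\DELTA^1_1$ class can ``span'' longer information than a single Borel rank can encode.

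First, I would verify strictness of the Borel hierarchy up to $\kappa^+$: for every $\alpha < \kappa^+$, the class $\boldsymbol{\Sigma}^0_\alpha$ is properly larger than $\bigcup_{\beta < \alpha}\boldsymbol{\Sigma}^0_\beta$. This follows from the standard universal-set-plus-diagonalization argument, adapted from the classical setting using the assumption $\klk = \kappa$ to code sequences of length $<\kappa$ as elements of $\kk$. In particular, this shows that every genuinely Borel set has a well-defined rank $<\kappa^+$.

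Second, I would exhibit a natural $\DELTA^1_1$ candidate, for instance
\[
W \;=\; \{x \in \dk \,:\, x \text{ codes a well-ordering of } \kappa \text{ of order type exactly } \kappa\}.
\]
After fixing a coding of binary relations on $\kappa$ by elements of $\dk$, one checks that $W$ is $\SIGMA^1_1$ (witnessed by an order-isomorphism $f : (\kappa, \in) \to (\kappa, <_x)$, which is a single $\kk$-quantifier) and also $\PI^1_1$ (``no infinite descending $<_x$-chain'' together with ``every proper $<_x$-initial segment has cardinality $<\kappa$''). Hence $W \in \DELTA^1_1$.

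Third, I would argue $W$ is not Borel by a boundedness argument on ranks: if $W$ were $\boldsymbol{\Sigma}^0_\alpha$ for some fixed $\alpha < \kappa^+$, then membership in $W$ could be decided from bounded-rank Borel information, contradicting the need to certify arbitrarily long proper initial segments as having type $<\kappa$.

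The main obstacle will be executing this last step cleanly. The classical $\SIGMA^1_1$-boundedness of $\mathrm{WO}$ on $\omega$ has a generalized analog, but transferring it into a statement about Borel rank requires care. An alternative and perhaps more robust route is to invoke the failure of Lusin/Suslin separation on $\kk$: one constructs two disjoint $\SIGMA^1_1$ sets that cannot be separated by any Borel set (the candidates being pairs of projections of carefully chosen $\kappa$-trees, or a diagonal construction against the universal Borel set built in Step 1). Inseparability of a disjoint $\SIGMA^1_1$-pair immediately yields a $\SIGMA^1_1$ set with $\SIGMA^1_1$ complement that cannot be Borel, giving $\mathsf{Borel} \subsetneq \DELTA^1_1$ without having to analyze a specific candidate like $W$ directly.
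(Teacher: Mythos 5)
Your central candidate does not work: the set $W$ of codes of well-orderings of $\kappa$ of order type exactly $\kappa$ is in fact \emph{Borel} in the generalized setting, so Step 3 is attempting to prove something false. The reason is precisely one of the ``fundamental differences'' this paper emphasizes (see the proof of Lemma~\ref{Wellorder}): for a relation $E$ on $\kappa$, well-foundedness is a \emph{closed} condition, since $E$ is well-founded iff $E\cap(\alpha\times\alpha)$ is well-founded for every $\alpha<\kappa$ (any $\omega$-descending chain is bounded below $\kappa$ by regularity). Moreover ``order type exactly $\kappa$'' for a well-order with domain $\kappa$ is equivalent, again by regularity, to ``every proper initial segment $\mathrm{pred}_x(a)$ is bounded in $\kappa$'', which is $\exists\alpha\,\forall b$ over a clopen matrix, hence at worst $\PI^0_3$. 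So $W$ is an intersection of closed and $\PI^0_3$ sets and lies well inside the Borel hierarchy. Relatedly, your boundedness heuristic has nothing to bound: the classical proof that $\mathrm{WO}$ is not Borel exploits the \emph{unboundedness} of the order types of its members in $\omega_1$, whereas all members of your $W$ have the same order type $\kappa$. The classical $\mathrm{WO}$-style examples are exactly the ones that collapse to Borel when $\omega$ is replaced by $\kappa$.

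Your fallback route is the right one --- indeed the paper does not prove this Fact itself but cites \cite[Theorem 18 (1)]{FriedmanHyttinenKulikov}, whose argument is essentially a diagonalization of the kind you sketch --- but the sketch omits the crux. One must fix a coding $\xi\mapsto B_\xi$ of Borel sets by well-founded $\kappa$-branching code-trees and prove that the relation ``$x\in B_\xi$'' \emph{and} its negation are both $\SIGMA^1_1$ (this again rests on well-foundedness of the code being closed, plus an induction assigning $\SIGMA^1_1$ witnesses, e.g.\ labelings or winning strategies in the associated game, along the code); only then is the diagonal set $D=\{\xi\mid \xi \text{ codes a Borel set and } \xi\notin B_\xi\}$ seen to be $\DELTA^1_1$, while the diagonal argument shows $D\neq B_\eta$ for every $\eta$. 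Without that key lemma the diagonalization only produces a non-Borel set, not a $\DELTA^1_1$ one. Also, your separation remark needs care: a Borel-inseparable disjoint pair of $\SIGMA^1_1$ sets yields a $\DELTA^1_1$ non-Borel set only if the pair partitions the space (otherwise the complement of one piece need not be $\SIGMA^1_1$). Step 1 (strictness of the Borel hierarchy of length $\kappa^+$ via universal sets, using $\klk=\kappa$) is fine but, as you note, does not by itself touch the question.
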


A proof of this fact can be found in \cite[Theorem 18 (1)]{FriedmanHyttinenKulikov}, and we also refer readers to Sections II and III of the same paper for a more detailed survey of the basic properties of $\kk$ and $\dk$.

\medskip
\subsection{The club filter} \label{2club}
Sets that will play a crucial role in this paper are those related to the \emph{club filter}. As usual, we may identify $\dk$ with $\PP(\kappa)$ via characteristic functions.

\begin{Fact}  \label{ClubFilter} The set $C := \{a \subseteq \kappa \mid a$ contains a club$\}$ is $\SIGMA^1_1$. \end{Fact}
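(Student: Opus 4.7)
The plan is to exhibit $C$ as the projection of a closed (in fact, basic) subset of $\dk \times \kk$. The key observation is the standard fact that a set $D \subseteq \kappa$ is a club if and only if $D$ is the range of some strictly increasing, continuous function $f : \kappa \to \kappa$ (its enumeration function). Hence $a \in C$ if and only if there exists such an $f$ with $\ran(f) \subseteq a$, i.e., with $a(f(\alpha)) = 1$ for every $\alpha < \kappa$.

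Concretely, I would define
\[
  B := \{(a, f) \in \dk \times \kk \mid f \text{ is strictly increasing, continuous, and } \forall \alpha<\kappa,\ a(f(\alpha)) = 1\}
\]
and then verify that $B$ is closed in the product topology. The projection of $B$ onto the first coordinate is precisely $C$, which yields the $\SIGMA^1_1$ classification.

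To check closedness, I would handle each conjunct separately. ``Strictly increasing'' is $\bigcap_{\alpha<\beta<\kappa}\{f \mid f(\alpha)<f(\beta)\}$, an intersection of clopen sets, hence closed. For continuity at limit $\lambda<\kappa$, the complement consists of pairs $(\lambda,\gamma)$ with $f(\lambda) > \gamma$ and $f(\alpha) \le \gamma$ for all $\alpha<\lambda$; for fixed $\lambda,\gamma$ this is the intersection of $\{f \mid f(\lambda)>\gamma\}$ (clopen) with $\bigcap_{\alpha<\lambda}\{f \mid f(\alpha)\le \gamma\}$, and the latter is determined by the restriction $f\till\lambda$ to fewer than $\kappa$ coordinates, so it is open. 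Taking the union over the $\kappa$-many pairs $(\lambda,\gamma)$ gives an open set, so continuity is closed. Finally, the condition $\forall\alpha\,(a(f(\alpha))=1)$ is closed: for each $\alpha$, the set $\{(a,f) \mid a(f(\alpha))=1\} = \bigcup_{\beta<\kappa}\bigl([\{(\alpha,\beta)\}] \cap \{a \mid a(\beta)=1\}\bigr)$ is a $\kappa$-union of clopens, hence open, and its complement is closed; intersecting over $\alpha$ keeps it closed.

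I do not expect a serious obstacle here; the only subtlety is the use of $\klk=\kappa$ (part of the standing assumption on $\kappa$), which guarantees that basic open sets are indexed by a set of size $\kappa$ and that $<\kappa$-indexed intersections of the form $\bigcap_{\alpha<\lambda}\{f\mid f(\alpha)\le\gamma\}$ remain open in the bounded topology on $\kk$. Once closedness of $B$ is established, $C = \pi_1[B]$ is $\SIGMA^1_1$ by definition of the generalized analytic sets.
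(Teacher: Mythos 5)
Your argument is correct, and it reaches the conclusion by a somewhat different route than the paper. The paper keeps the witness in $\dk$: it observes that ``$c$ is closed'' is a topologically closed condition on $c$ (because $c$ is closed iff $c \cap \alpha$ is closed in $\alpha$ for every $\alpha < \kappa$) and that unboundedness is $G_\delta$, so that $C$ is the projection of a $G_\delta$ (hence Borel) subset of $\dk \times \dk$; this relies on the parenthetical ``(equivalently: Borel)'' in the paper's definition of $\SIGMA^1_1$. You instead replace the club by its strictly increasing continuous enumeration $f \in \kk$, which makes the witness set $B$ genuinely closed in $\dk \times \kk$, so $C = \pi_1[B]$ is $\SIGMA^1_1$ directly from the ``projection of a closed set'' form of the definition. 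Both reductions are standard; yours buys a closed rather than merely Borel witness set at the cost of checking continuity of $f$ coordinatewise, while the paper's is marginally shorter. One wording slip in your last step: to conclude that $\bigcap_{\alpha<\kappa}\{(a,f) \mid a(f(\alpha))=1\}$ is closed you need each set in the intersection to be \emph{closed}, whereas your sentence only establishes that each is open with closed complement. This is harmless because each such set is in fact clopen --- its complement $\{(a,f) \mid a(f(\alpha))=0\}$ is likewise a $\kappa$-union of clopen rectangles, hence open --- but the justification should be stated that way. (Also, the openness of $\bigcap_{\alpha<\lambda}\{f \mid f(\alpha)\le\gamma\}$ for $\lambda<\kappa$ comes simply from the fact that sequences of length $\lambda<\kappa$ index basic open sets, not from the cardinality hypothesis $\klk=\kappa$.)
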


\begin{proof} For every $c \subseteq \kappa$, note that $c$ is closed (in the ``club''-sense) if and only if for every $\alpha <\kappa$,  $c \cap \alpha$ is closed in $\alpha$. Therefore, ``being closed'' is a (topologically) closed property. Being unbounded, on the other hand, is a $G_\delta$ property, so ``being club'' is $G_\delta$. Then for all $a \subseteq \kappa$ we have $a \in C$ iff $\exists c \: (c$ is club and $c \subseteq a)$, which is  $\SIGMA^1_1$. \end{proof}


In \cite{HalkoShelah} it was first noticed that the club filter provides a counterexample to the Baire property.

\begin{Thm}[Halko-Shelah] \label{HalkoShelah} The club filter $C$ does not satisfy the Baire property. \end{Thm}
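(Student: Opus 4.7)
The plan is to assume $C$ has the Baire property and derive a contradiction via a zero-one law, ruling out the meager and comeager cases separately.

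First, I will establish a \emph{zero-one law} for $C$: if $C$ has BP then $C$ is either meager or comeager. This relies on $C$ being invariant under bounded modifications --- if $a$ contains a club $D$ and $b$ agrees with $a$ past some $\alpha<\kappa$, then $D\setminus\alpha$ is still a club inside $b$. Writing $C = O \triangle M$ with $O$ open and $M$ meager, either $O=\varnothing$ (so $C$ is meager), or $O\supseteq[s]$ for some $s \in \dlk$; in the latter case, for any $s' \in \dlk$ of the same length, the XOR-style homeomorphism $a \mapsto a \oplus (s\oplus s')$ (acting as identity past $|s|$) is meager-preserving, fixes $C$ setwise by bounded-tail invariance, and carries $[s]$ onto $[s']$. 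So the comeagerness of $C$ in $[s]$ transfers to every basic open set of that length, making $C$ comeager in all of $\dk$.

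Next, I rule out the comeager case using the flip $f \colon a \mapsto \kappa \setminus a$, a meager-preserving involutive homeomorphism of $\dk$. Since $a \notin C$ iff $\kappa \setminus a$ is stationary iff $f(a)$ is stationary, the image $f[\dk \setminus C]$ is precisely the set of stationary subsets of $\kappa$. So if $\dk \setminus C$ were meager, the stationary sets would also be meager. But $C$ is contained in the stationary sets, so $C$ would then be meager too; together with $C$ comeager this violates the Baire category theorem for $\dk$.

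Finally, I rule out the meager case via a $\kappa$-length fusion. Suppose $C = \bigcup_{\alpha<\kappa} N_\alpha$ with each $N_\alpha$ closed nowhere dense (WLOG, by passing to closures). Enumerate $\kappa$ as $\{\alpha_\beta : \beta<\kappa\}$ and recursively build a strictly $\subsetneq$-increasing chain $(s_\beta)_{\beta<\kappa}$ in $\dlk$: at a stage $\beta+1$, first append a single bit $1$ to $s_\beta$ (forcing the ordinal $|s_\beta|$ into the target set), then use the nowhere-density of $N_{\alpha_\beta}$ to extend further to some $s_{\beta+1}$ with $[s_{\beta+1}] \cap N_{\alpha_\beta} = \varnothing$; at a limit $\lambda<\kappa$, let $s_\lambda := \bigcup_{\beta<\lambda} s_\beta$, which has length $<\kappa$ by regularity. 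The resulting $a := \bigcup_{\beta<\kappa} s_\beta \in \dk$ avoids every $N_\alpha$ by construction, yet contains the set $D := \{|s_\beta| : \beta<\kappa\}$, and $D$ is a club in $\kappa$ --- unbounded because the lengths are cofinal in $\kappa$, and closed because at any limit $\lambda$ the length $|s_\lambda| = \sup_{\beta<\lambda}|s_\beta|$ lies in $D$ as the limit of earlier elements. Hence $a \in C \setminus \bigcup_{\alpha} N_\alpha = \varnothing$, the desired contradiction.

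The main technical point is synchronizing the two obligations on each $s_{\beta+1}$: avoiding the meager cover at stage $\alpha_\beta$ while simultaneously producing a club inside $a$. The trick is to append the required $1$-bit \emph{first} and only then invoke nowhere-density for a free extension; the latter cannot overwrite the bit already committed, so the ``split-points'' $|s_\beta|$ automatically land in $a$ and form a club, thanks to the limit convention and the regularity of $\kappa$.
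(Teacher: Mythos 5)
Your proof is correct. The heart of it --- the $\kappa$-length fusion that interleaves ``dodge the next nowhere dense set'' with ``commit a $1$ at the current length,'' so that the split-points $\{|s_\beta|\}$ form a club contained in the resulting $a$ --- is exactly the engine of the paper's argument, which proves the more general Theorem \ref{ClubCounterexample} by running the same diagonalization along the splitting nodes of a $\kappa$-Sacks (or $\kappa$-Miller) tree. Where you genuinely diverge is in how the two cases are organized: the paper treats ``$[T]\subseteq^* C$'' and ``$[T]\cap C=^*\varnothing$'' symmetrically, running the same fusion but inserting $0$'s (resp.\ $1$'s) at the club of limit splitting levels to land outside (resp.\ inside) $C$; you instead invoke a zero-one law from the tail-invariance of $C$ and then kill the comeager case with the complementation homeomorphism $a\mapsto\kappa\setminus a$, using that it carries $\dk\setminus C$ onto the stationary sets, which contain $C$. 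Your route buys a cleaner conceptual split (only one fusion is needed), at the cost of being tied to the Cohen/Baire-property case: the zero-one law and the flip exploit homeomorphisms of the whole space and the homogeneity of basic open sets, and do not transfer to the Sacks/Miller generalization the paper is after. Two small points you use implicitly and should record: the passage from ``$C$ comeager in every $[s']$ with $|s'|=|s|$'' to ``$C$ comeager'' needs that there are at most $\kappa$ such $s'$, which is the standing hypothesis $\kappa^{<\kappa}=\kappa$ together with closure of the meager ideal under $\kappa$-unions; and each $|s_\lambda|<\kappa$ at limits uses the regularity of $\kappa$, as you note.
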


We will prove a generalization of the above, see Theorem \ref{ClubCounterexample}. An immediate corollary of Theorem \ref{HalkoShelah} is that in the generalized setting, analytic sets do not satisfy the Baire property. Although the club filter clearly cannot be Borel (Borel sets \emph{do} satisfy the Baire property, in any topological space satisfying the Baire category theorem), it can consistently be $\DELTA^1_1$ for successors $\kappa$.

\begin{Thm}[Mekler-Shelah; Friedman-Wu-Zdomskyy] \label{ClubDelta} For any successor cardinal $\kappa$, it is consistent that the club filter on $\kappa$ is $\DELTA^1_1$. \end{Thm}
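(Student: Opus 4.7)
The strategy is to force over a ground model $V$ (for the successor case $\kappa=\lambda^+$ one may take $V=L$, which gives in particular $2^\lambda=\lambda^+$) to obtain an extension in which the club filter becomes $\DELTA^1_1$. By Fact \ref{ClubFilter}, $C$ is always $\SIGMA^1_1$, so it is enough to render the complement of $C$ also $\SIGMA^1_1$. Equivalently, one needs a $\SIGMA^1_1$ definition of ``$S\subseteq\kappa$ is stationary'', which is naturally only $\PI^1_1$.

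The plan is to force a generic coding sequence $\vec{A}=\langle A_\xi : \xi<\kappa\rangle$ of subsets of $\kappa$ with the property that in the extension
\[
S \text{ is stationary} \iff \exists \xi<\kappa\ \varphi(A_\xi, S),
\]
for some $\Pi^0_2$ (Borel) predicate $\varphi$. The sequence $\vec A$ is itself coded by a single element of $\dk$, and since $\DELTA^1_1$ here is boldface, it may be used as a parameter; such a coding then yields a $\SIGMA^1_1(\vec A)$ definition of stationarity and hence a $\DELTA^1_1(\vec A)$ definition of $C$. To construct $\vec A$, one iterates, with $\kappa$-support and length $\kappa^+$, a family of $\kappa$-closed, $\kappa^+$-c.c.\ forcings; at each stage, a bookkeeping argument chooses a name $\dot S$ for a subset of $\kappa$ and the corresponding iterand generically adjoins a witness $A_\xi$ certifying stationarity of $\dot S$ --- typically via almost-disjoint coding of $\dot S$ along a fixed ladder system at points of cofinality $\lambda$, or via a suitable branch through a $\kappa$-tree read off from $\dot S$. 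Genericity and bookkeeping together ensure that in the final extension every stationary $S$ (including those arising only in intermediate models) is certified by some $A_\xi$, while non-stationary $S$ are not.

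The hard part --- and the technical core of both the original Mekler--Shelah construction at $\aleph_1$ and the Friedman--Wu--Zdomskyy extension to successor cardinals --- is preserving \emph{all} previously stationary subsets of $\kappa$ along the iteration. A naive coding that shoots a club through each chosen $S$ would destroy stationarity of $\kappa\setminus S$ and invalidate the displayed equivalence. The preservation is achieved by a fusion-style argument adapted to the $\kappa$-closed, $\kappa^+$-c.c.\ setting, together with a coding iterand carefully designed to be compatible with a prescribed family of ``witness'' clubs. A secondary task is to verify that $\varphi$ is uniform in $\vec A$ and $S$, so that the $\SIGMA^1_1(\vec A)$ definition of stationarity applies to all $S$ simultaneously in the final model.
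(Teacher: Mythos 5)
First, note that the paper does not actually prove this theorem: its ``proof'' is a two-line citation of Mekler--Shelah \cite{CanaryTree} (for $\kappa=\omega_1$) and Friedman--Wu--Zdomskyy \cite{ClubDelta} (for arbitrary successors). So your proposal has to be judged as a reconstruction of those arguments, and while your global strategy is the right one (reduce to producing a $\SIGMA^1_1$ definition of stationarity, force witnesses by a length-$\kappa^+$ iteration with bookkeeping, identify stationary-set preservation as the crux), there is a concrete error in the logical form of the definition you propose. If the witnesses form a single sequence $\vec A=\langle A_\xi:\xi<\kappa\rangle$ coded by one element of $\dk$ and $\varphi$ is Borel, then $\{S:\exists\xi<\kappa\,\varphi(A_\xi,S)\}$ is a $\kappa$-union of Borel sets and hence Borel; your displayed equivalence would therefore make the club filter \emph{Borel}, which is provably impossible, since Borel sets have the Baire property and the club filter does not (Theorem \ref{HalkoShelah}). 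The indexing slip is not cosmetic: the iteration has length $\kappa^+$ and new stationary sets appear cofinally often, so no $\kappa$-sized family of witnesses fixed in advance can certify them all. There must be $\kappa^+$-many witnesses, one per bookkeeping stage, they cannot be packaged into a single parameter, and the definition must read ``$S$ is stationary iff $\exists x\in\dk\;\psi(x,S,p)$'' with $\psi$ Borel and $p$ a fixed $\kappa$-sized ground-model parameter (e.g.\ a partition of $\kappa\cap{\rm Cof}(\lambda)$ into stationary pieces, or a ladder system). The unbounded existential quantifier over $\dk$ is precisely what keeps the definition properly $\SIGMA^1_1$ rather than Borel.

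Second, everything you yourself identify as ``the hard part'' is asserted rather than proved: you hedge between two incompatible coding mechanisms (almost-disjoint coding vs.\ tree branches), you do not verify the ``no false witnesses'' direction (that no stage of the iteration can accidentally create a witness for a set that is, or later becomes, non-stationary --- this is where the witness relation must itself absolutely imply stationarity), and the preservation of all stationary subsets of $\kappa$ is dispatched with ``a fusion-style argument adapted to the $\kappa$-closed, $\kappa^+$-c.c.\ setting.'' Note the tension here: if the iterands really were ${<}\kappa$-closed, stationary subsets of $\kappa$ would be preserved by a routine elementary-submodel argument and there would be no hard part; in the actual constructions the coding components are only strategically closed, which is exactly why the preservation argument is delicate and occupies most of \cite{ClubDelta}. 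As written, the proposal is a plausible road map for the cited papers, not a proof.
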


\begin{proof} This was proved for $\kappa = \omega_1$ in  \cite{CanaryTree} and for arbitrary successors $\kappa$ in  \cite{ClubDelta}. \end{proof} 

It is also consistent that the club filter is not $\DELTA^1_1$---this will follow from Theorem \ref{theorem}.

\medskip\subsection{Absoluteness} Two fundamental results in  descriptive set theory are  \emph{analytic} (\emph{Mostowski}) \emph{absoluteness} and \emph{Shoenfield absoluteness}. In general, this type of absoluteness does not hold for uncountable $\kappa$. For example, let $\kappa = \lambda^+$ for regular $\lambda$, pick $S \subseteq \kappa \cap {\rm Cof}(\lambda)$ such that both $S$ and $(\kappa \cap {\rm Cof}(\lambda)) \setminus S$ are stationary. Let $\IP$ be a  forcing for adding a club to $S \cup {\rm Cof}({<}\lambda)$. Then, if $\Phi$ is the $\Sigma^1_1$ formula defining the club filter $C \subseteq \PP(\kappa)$ from Fact \ref{ClubFilter}, we have that $V \models \lnot \Phi(S \cup {\rm Cof}({<}\lambda))$ while $V^\IP \models \Phi(S \cup {\rm Cof}({<}\lambda))$, so $\Sigma^1_1$-absoluteness fails even for $\kappa^+$-preserving forcing extensions. On the other hand, $\Sigma^1_1$-absoluteness does hold for generic extensions via ${<}\kappa$-closed forcings.

\begin{Lem} \label{ClosedAbsoluteness} Let $\IP$ be a ${<}\kappa$-closed forcing. Then $\Sigma^1_1$ formulas are absolute between $V$ and $V^\IP$. \end{Lem}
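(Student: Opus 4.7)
The plan is to reduce the $\Sigma^1_1$ statement to the existence of a $\kappa$-branch through a tree coded in $V$, and then use the ${<}\kappa$-closedness of $\IP$ to recursively build such a branch in $V$ whenever one exists in $V^{\IP}$. I write the $\Sigma^1_1$ formula $\varphi(x)$ as $\exists y \in \kk \ (x,y)\in [T]$, where $T \subseteq (\kappa\times\kappa)^{<\kappa}$ is a pruned, ${<}\kappa$-closed tree in $V$. The upward direction is immediate: any witness $y \in V$ remains a witness in $V^{\IP}$, because each atomic statement $(x\till\alpha, y\till\alpha)\in T$ is absolute.

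For the downward direction, suppose $V^{\IP}\models \varphi(x)$ and fix a name $\dot y$ and a condition $p_0\in \IP$ forcing $(x,\dot y)\in [T]$. Working inside $V$, I define by recursion on $\alpha < \kappa$ a descending sequence of conditions $\langle p_\alpha : \alpha < \kappa\rangle$ below $p_0$ and a function $y : \kappa \to \kappa$ such that $p_{\alpha+1}$ decides $\dot y \till (\alpha+1)$ to be $y\till(\alpha+1)$. At a successor stage, given $p_\alpha$, extend to some $p_{\alpha+1}\leq p_\alpha$ deciding $\dot y(\alpha)$ and set $y(\alpha)$ to that value. At a limit stage $\lambda < \kappa$, invoke the ${<}\kappa$-closedness of $\IP$ to pick (using AC in $V$) any lower bound $p_\lambda$ of $\{p_\beta : \beta < \lambda\}$; the values of $y\till \lambda$ are already determined. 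The entire construction takes place in $V$, so $y \in V$.

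To verify $(x,y)\in[T]$ in $V$, I check that $(x\till\alpha, y\till\alpha)\in T$ for every $\alpha<\kappa$. At a successor $\alpha = \beta+1$, $p_{\beta+1}$ forces the absolute atomic statement $(x\till(\beta+1), y\till(\beta+1))\in T$, so it holds in $V$. At a limit $\alpha$ the second, complementary use of ${<}\kappa$-closedness enters: the tree $T$ itself is ${<}\kappa$-closed, so $(x\till\alpha, y\till\alpha)$ is the componentwise union of the previously verified chain $\{(x\till\beta, y\till\beta) : \beta < \alpha\}$ and therefore lies in $T$.

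The subtle point I would expect a careful reader to query is the distinction between ${<}\kappa$-closedness and the weaker property of ${<}\kappa$-distributivity: for instance, forcing with a normal $\kappa$-Souslin tree is ${<}\kappa$-distributive but not ${<}\kappa$-closed, because an ascending chain of tree nodes from below a limit level need not correspond to any node of the Souslin tree at that level. Such a forcing can add new $\kappa$-branches, so $\Sigma^1_1$ absoluteness genuinely requires the stronger closure hypothesis. In the argument above, ${<}\kappa$-closedness of $\IP$ is precisely what supplies the $p_\lambda$'s at limit stages of the recursion, while ${<}\kappa$-closedness of $T$ is what preserves membership in $T$ across limit levels during verification; both roles are essential.
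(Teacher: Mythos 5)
Your proof is correct and follows essentially the same route as the paper's: fix a name for the witness, build a descending $\kappa$-sequence of conditions deciding ever-longer initial segments (using ${<}\kappa$-closure of $\IP$ at limits), and observe that the union is a branch through $T$ in $V$. The only cosmetic difference is that at limit levels of the verification you invoke ${<}\kappa$-closure of the tree $T$, whereas the paper gets the same conclusion from downward closure of $T$ together with the cofinality of the lengths $|t_i|$ in $\kappa$; both are fine.
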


\begin{proof} Let  $\phi(x)$ be a $\Sigma^1_1$ formula with parameters in $V$. Let $x \in \kk$ and assume $V^\IP \models \phi(x)$. Let $T$ (in $V$) be a two-dimensional tree such that $\{x \mid \phi(x)\} = p[T]$, i.e.,  the projection of $T$ to the first coordinate. Let $h \in \kk \cap V^\IP$ be such that $V^\IP \models (x,h) \in [T]$ and let $\dot{h}$ be a $\IP$-name for $h$.

\p By induction, build an increasing sequence $\{p_i \mid i<\kappa\}$ of $\IP$-conditions, and an increasing sequence $\{t_i \in \klk \mid i<\kappa\}$, such that each  $p_i \Vdash t_i \subseteq \dot{h}$. This can be done since at limit stages $\lambda < \kappa$, we can define $t_\lambda := \bigcup_{i<\lambda} t_i$ and pick $p_\lambda$ below $p_i$ for all $ i<\lambda$. Since every $p_i$ forces $(\check{x}, \dot{h}) \in [T]$, it follows that for every $i$ we have $(x \till |t_i|, t_i) \in T$. But then (in $V$) let $g := \bigcup_{i<\kappa} t_i$, so $(x,g) \in [T]$ and therefore $\phi(x)$ holds.
\end{proof}

\medskip
\subsection{Well-order of the reals} In the classical setting, it is well-known that in $L$ there exists a $\SIGMA^1_2$ well-order of the reals. In fact, the well-order is ``$\SIGMA^1_2$-good'', meaning that both the relation $<_L$ on the reals, and the binary relation defined by $$\Psi(x,y) \equiv  \text{ ``}x \text{ codes the set of $<_L$-predecessors of $y$''}$$ is $\SIGMA^1_2$. The proof uses absoluteness of $<_L$ and $\Psi$ between $L$ and initial segments $L_\delta$ for countable $\delta$, and the fact that ``$E \subseteq \omega \times \omega$ is well-founded'' is a $\PI^1_1$-predicate on $E$. In the generalized setting, however, the predicate ``$E \subseteq \kappa \times \kappa$ is well-founded'' is \emph{closed}, leading to the following result:

\begin{Lem} \label{Wellorder} In $L$, there is a $\SIGMA^1_1$-good well-order of  $\kk$. \end{Lem}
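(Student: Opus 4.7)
The plan is to adapt the classical argument that $L$ carries a $\SIGMA^1_2$-good well-order of $\ww$, and to drop one quantifier level by exploiting the observation stated just before the lemma: for $E\subseteq\kappa\times\kappa$ the predicate ``$E$ is well-founded'' is closed in $\dk$. Concretely, an $\omega$-descending chain through $E$ is a function $f\in\kappa^\omega$, and $|\kappa^\omega|=\kappa$ since $\kappa^{<\kappa}=\kappa$. For fixed $f$, ``$f$ is $E$-descending'' depends on only countably many bits of $E$ and, since $\omega<\kappa$, is a ${<}\kappa$-intersection of basic clopens, hence clopen; so ``$E$ is not well-founded'' is a $\kappa$-union of clopens, hence open. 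Classically the analogue is $\PI^1_1$, which is exactly why the classical $L$-order is $\SIGMA^1_2$ and the generalized one will be $\SIGMA^1_1$.

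The coding is as follows. Fix a canonical bijection $\kappa\to\kappa\times\kappa$ giving $\dk\cong 2^{\kappa\times\kappa}$; let $\mathrm{WO}\subseteq\dk$ be the (closed) set of codes of well-orderings of $\kappa$. For each $\alpha<\kappa^+$ and each $E\in\mathrm{WO}$ of order type $\alpha$, $L_\alpha$ has cardinality $\leq\kappa$ and is isomorphic, after fixing an enumeration of its universe by $\kappa$, to a structure $(\kappa,\in^*)$ with $\in^*\subseteq\kappa\times\kappa$ well-founded and extensional; both properties are closed conditions on $\in^*$, and the Mostowski collapse then recovers $L_\alpha$.

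The main claim is that for $x,y\in\kk\cap L$,
\[
x<_L y \iff \exists z\in\dk\ \phi(x,y,z),
\]
where $\phi(x,y,z)$ is the closed condition that $z$ codes a pair $(E,\in^*)$ with $E\in\mathrm{WO}$ of some order type $\alpha$, $(\kappa,\in^*)$ well-founded and extensional, $(\kappa,\in^*)$ satisfies the first-order statement ``I am $L_\alpha$'' (a conjunction of closed conditions tracking the $L$-hierarchy level-by-level along $E$), designated indices $u,v<\kappa$ represent $x$ and $y$ under the Mostowski collapse, and $(\kappa,\in^*)\models u<_L v$. Standard absoluteness of $<_L$ to sufficiently tall initial segments of $L$ gives the equivalence, and the right-hand side has the form $\exists z(\text{closed})$, so $<_L$ is $\SIGMA^1_1$.

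For $\SIGMA^1_1$-goodness the relation $\Psi(x,y)\equiv$ ``$x$ codes the $<_L$-predecessors of $y$'' is handled by the same template: existentially guess a code for a sufficiently tall $L_\alpha$ containing $y$ and assert, as a closed condition, that $x$ enumerates exactly $\{z:z<_L y\}$ inside that $L_\alpha$. The main technical obstacle is verifying that ``$(\kappa,\in^*)$ faithfully encodes $L_{\mathrm{otp}(E)}$'' is genuinely closed, which requires writing the definable $L$-hierarchy level-by-level along $E$ as a conjunction of first-order formulas interpreted in $(\kappa,\in^*)$ with all quantifiers bounded below $\kappa$; this is routine but needs care, and it is exactly the step where the closedness of well-foundedness is repeatedly used.
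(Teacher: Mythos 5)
Your proof follows essentially the same route as the paper's: rewrite $x<_L y$ as the existence of a code for a well-founded structure of size $\kappa$ isomorphic to some $L_\delta$, $\delta<\kappa^+$, satisfying $x<_L y$, with the quantifier saving coming precisely from the fact that well-foundedness of a relation on $\kappa$ is a closed (rather than $\PI^1_1$) condition. One small imprecision: satisfaction of an arbitrary first-order sentence in $(\kappa,\in^*)$ is Borel rather than closed (quantifiers ranging over $\kappa$ introduce $\kappa$-unions and $\kappa$-intersections), but this is harmless since $\SIGMA^1_1$ sets are exactly the projections of Borel sets in this setting.
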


\begin{proof} As usual, we have that for $x,y \in \kk$, $x <_L y$ iff $\exists \delta <\kappa^+$ such that $x,y \in L_\delta$ and $L_\delta \models x <_L y$. Using standard tricks, this can be re-written as ``$\exists E \subseteq \kappa \times \kappa \; (E$ is well-founded, $x,y \in \ran(\pi_E)$ and $(\omega, E) \models ZFC^* + V=L + x <_L y)$'', where $\pi_E$ refers to the transitive collapse of $(\omega,E)$ onto some $(L_\delta,\in)$ and $ZFC^*$ is a sufficiently large fragment of $ZFC$. The statement ``$E$ is well-founded'' is closed because $E$ is well-founded iff $\forall \alpha < \kappa$ $E \cap (\alpha \times \alpha)$ is well-founded. Thus we obtain a $\SIGMA^1_1$ statement. A similar argument works with $<_L$ replaced by $\Psi(x,y)$, showing that the well-order is $\SIGMA^1_1$-good.
\end{proof}

\medskip
\subsection{Proper Forcing} \label{2Forcing} A ubiquitous tool in the study of the classical Baire and Cantor spaces is Shelah's theory of \emph{proper forcing}. It is a technical requirement on a forcing notion which is just sufficient to imply preservation of $\omega_1$, while itself being preserved by countable support iterations, and moreover having a multitude of natural examples. Over the years, there have been various attempts at generalizing this theory to higher cardinals (see e.g. \cite{ShelahNotCollapsing, RoslanowskiShelahMore, FriedmanHonzikZdomskyy} for some recent contributions). Of course, we can use the following straightforward generalization:

\begin{Def} \label{proper} A forcing $\IP$ is \emph{$\kappa$-proper} if for every sufficiently large $\theta$ (e.g. $\theta > 2^{|\IP|}$), and for all elementary submodels $M \prec H_\theta$ such that $|M|=\kappa$ and $M$ is closed under ${<}\kappa$-sequences, for every $p \in \IP \cap M$ there exists $q \leq p$ such that for every dense $D \in M$, $D \cap M$ is predense below $q$. \end{Def}

The above property follows both from the $\kappa^+$-c.c. and a $\kappa$-version of Axiom A, and implies that $\kappa^+$ is preserved, but the property itself is  in general not preserved by iterations, see \cite[Example 2.4]{RoslanowskiSearch}. Nevertheless, it is a useful formulation that we will need on some occasions.

While a uniform theory for $\kappa$-properness is lacking so far,  preservation theorems are usually proved either using the $\kappa^+$-c.c. or on a case-by-case basis.

\begin{Fact} \label{FactusCactus} $\;$ \ \begin{enumerate}

\item $\kappa$-Sacks forcing $\IS_\kappa$ (see Example \ref{ForcingExamples}) was studied by Kanamori \cite{KanamoriSacks}, where the following facts were proved: \begin{enumerate}
  \item $\IS_\kappa$ satisfies a generalized version of Axiom A (see Definition \ref{AxiomA} (2)).
  \item Assuming $\Diamond_\kappa$, iterations of $\IS_\kappa$ with ${\leq} \kappa$-sized supports also satisfy a version of  Axiom A. 
  \item If $\kappa$ is inaccessible, then $\IS_\kappa$ is \emph{$\kk$-bounding} (meaning that for every $x \in \kk \cap V^{\IS_\kappa}$ there exists $y \in \kk \cap V$ such that $x(i) < y(i)$ for sufficiently large $i < \kappa$), and so are arbitrary iterations of $\IS_\kappa$ with ${\leq}\kappa$-size supports.
  
  \end{enumerate}
\item $\kappa$-Miller forcing $\IM_\kappa$ (see Example \ref{ForcingExamples}) was studied by Friedman and Zdomskyy  \cite{FriedmanZdomskyyMiller}, where the following facts were proved: \begin{enumerate}
  \item $\IM_\kappa$ satisfies a generalized version of Axiom A. 
  \item Assuming $\kappa$ is inaccessible, iterations of $\IM_\kappa$ with ${\leq} \kappa$-sized supports satisfy a version of Axiom A. 
\end{enumerate}
\end{enumerate} \end{Fact}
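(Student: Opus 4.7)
The plan is to treat this Fact essentially as a citation, since the paper explicitly attributes each clause to Kanamori~\cite{KanamoriSacks} and Friedman--Zdomskyy~\cite{FriedmanZdomskyyMiller}; but I would still want to sketch the architecture common to both proofs, since it illuminates why the generalized setting requires extra hypotheses ($\Diamond_\kappa$ or inaccessibility) that are absent classically. The overall strategy in both cases is the standard Axiom A/fusion framework, suitably lifted from $\omega$ to $\kappa$: one defines, for each $\alpha<\kappa$, a refinement relation $\leq_\alpha$ on conditions (perfect trees for $\IS_\kappa$, superperfect trees for $\IM_\kappa$) by requiring agreement up to the $\alpha$-th level of (club-)splitting, and one verifies that $\leq_\alpha$-decreasing sequences of length $<\kappa$ have greatest lower bounds obtained by intersecting and taking ${<}\kappa$-closures. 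The first step is therefore to make precise the correct notion of ``$\alpha$-th splitting level'' in the club-splitting setting, and to check the Axiom A fusion lemma: given any condition $p$ and any sequence of dense sets $\{D_\alpha \mid \alpha<\kappa\}$, one builds a fusion sequence meeting each $D_\alpha$ on every node of the current splitting front.

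The second step, iteration preservation, is where the hypotheses enter. Here I would follow the standard template for countable support iterations of Axiom A forcings, but with supports of size ${\leq}\kappa$ and fusion sequences of length $\kappa$. The crux is to produce, for each limit stage of the iteration and each intermediate model, a master condition that decides enough of the generic to continue the fusion. For $\IS_\kappa$ this requires a bookkeeping device to enumerate the relevant dense sets and submodels; the hypothesis $\Diamond_\kappa$ is used to guess these enumerations generically, giving the ``$\Diamond$-fusion'' of Kanamori. For $\IM_\kappa$ the inaccessibility of $\kappa$ is used instead to bound the size of the splitting structures one needs to track, so that the combinatorics go through without a guessing principle. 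I would expect the main obstacle to be precisely this iteration step: handling limits of cofinality $\kappa$ in the iteration, and arranging that the limit condition is still a legitimate tree (pruned, ${<}\kappa$-closed, with the correct club-splitting pattern). This is where the generalized setting genuinely diverges from the classical one, because the failure of ${<}\kappa$-closure at limits of cofinality $<\kappa$ must be ruled out by a direct combinatorial argument.

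Finally, for the $\kk$-bounding clause (1c), the approach is to combine fusion with a pigeonhole argument: given a name $\dot{x}$ for an element of $\kk$ and a condition $p$, one refines $p$ along a fusion sequence so that at each splitting front of level $\alpha$, every node in the front decides $\dot{x}\till\alpha$. Since $\kappa$ is inaccessible, the number of possible values of $\dot{x}(\alpha)$ across all $\leq \kappa$-many nodes at level $\alpha$ is bounded below $\kappa$, and taking the supremum coordinate-wise produces the dominating $y \in \kk \cap V$. The same argument lifts to iterations of size $\leq \kappa$ with supports of size $\leq\kappa$, again using inaccessibility to bound the coordinate-wise suprema along the iteration. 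Since detailed verifications appear in \cite{KanamoriSacks} and \cite{FriedmanZdomskyyMiller}, I would simply refer the reader there rather than reproducing the full fusion calculus.
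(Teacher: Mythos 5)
The paper offers no proof of this Fact at all --- it is stated purely as a citation of Kanamori \cite{KanamoriSacks} and Friedman--Zdomskyy \cite{FriedmanZdomskyyMiller} --- and your proposal correctly treats it the same way, with an accurate sketch of the fusion/Axiom A architecture, the role of $\Diamond_\kappa$ versus inaccessibility in the iteration lemmas, and the splitting-front decision argument for $\kk$-bounding. Nothing further is needed.
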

In particular, $\IS_\kappa$, $\IM_\kappa$ and their  iterations  are $\kappa$-proper in the sense of Definition \ref{proper} and thus preserve $\kappa^+$.

\section{Regularity properties} \label{3}

The regularity properties we will consider in this paper are those derived from definable tree-like forcing notions. In this section we give an abstract treatment  following the framework introduced by Ikegami in \cite{Ik10},   providing sufficient conditions so that the following facts can be proved uniformly: \begin{enumerate}
\item Regularity for Borel sets is true.
\item Regularity for arbitrary $\SIGMA^1_1$ sets is false.
\item Regularity for arbitrary $\DELTA^1_1$ sets is independent.
\end{enumerate}

\medskip
\subsection{Tree-like forcings on $\kk$} \label{31}

\begin{Def} \label{arboreal} A forcing notion $\IP$ is called $\kappa$\emph{-tree-like} if the conditions of $\IP$ are pruned and ${<}\kappa$-closed trees on $\kk$ or $\dk$, and for all $T \in \IP$ and all $s \in T$ the restriction $T {\thru} s := \{t \in T \mid s \subseteq t $ or $t \subseteq s\}$ is also a member of $\IP$. The ordering is given by $q \leq p$ iff $q \subseteq p$. Additionally, we require that the property of ``being a $\IP$-tree'' is absolute between models of ZFC. \end{Def}

Below are a few examples of $\kappa$-tree-like forcings that have either been considered in the literature or are natural generalizations of classical  notions.

\begin{Example} \label{ForcingExamples} $\;$ \begin{enumerate}
\item \emph{$\kappa$-Cohen forcing} $\IC_\kappa$. Conditions are the trees  corresponding to the basic open sets $[s]$, for $s \in \dlk$ or $\klk$, ordered by inclusion.

\item \emph{$\kappa$-Sacks forcing} $\IS_\kappa$. A tree $T$ on $\dk$ is called a \emph{$\kappa$-Sacks tree} if it is  pruned, ${<}\kappa$-closed and \begin{enumerate}
\item every node $t \in T$ has a splitting extension in $T$, and 
\item for every increasing sequence $\left<s_i \mid i<\lambda\right>$, $\lambda < \kappa$, of splitting nodes in $T$, $s := \bigcup_{\alpha<\lambda}s_\alpha$ is  a  \emph{splitting} node of $T$. \end{enumerate}
$\IS_\kappa$ is the partial order of $\kappa$-Sacks trees ordered by inclusion. 

\item \emph{$\kappa$-Miller forcing} $\IM_\kappa$. A tree $T$ on $\kulk$ is called a \emph{$\kappa$-Miller tree} if it is pruned, ${<}\kappa$-closed and \begin{enumerate}
\item every node $t \in T$ has a club-splitting extension in $T$,
\item for every increasing sequence $\left<s_i \mid i <\lambda\right>$, $\lambda < \kappa$, of  club-splitting nodes in $T$, $s := \bigcup_{i< \lambda}s_i$ is a club-splitting node of $T$.   Moreover, \emph{continuous club-splitting} is required, which is the following property: for every club-splitting limit node $s \in T$, if $\{s_i \mid i<\lambda\}$ is the set of all club-splitting initial segments of $s$ and $C_i := \{\alpha \mid s_i \cc \left<\alpha\right> \in T\}$ is the club witnessing club-splitting of $s_i$ for every $i$, then $C := \{\alpha \mid s \cc \left<\alpha\right> \in T\} = \bigcap_{i<\lambda} C_i$ is the club witnessing club-splitting of $s$.
\end{enumerate}
$\IM_\kappa$ is the partial order of $\kappa$-Miller trees ordered by inclusion.

\item \emph{$\kappa$-Laver forcing} $\IL_\kappa$. A tree $T$ on $\kulk$ is a $\kappa$\emph{-Laver tree} if all nodes $s \in T$ extending the stem of $T$ are club-splitting. $\IL_\kappa$ is the partial order of $\kappa$-Laver trees ordered by inclusion.

\item \emph{$\kappa$-Mathias forcing} $\IR_\kappa$. A $\kappa$-Mathias condition is a pair $(s,C)$, where $s \subseteq \kappa$, $|s| < \kappa$, $C \subseteq \kappa$ is a club, and $\max(s) < \min(C)$. The conditions are  ordered by $(t,D) \leq (s,C)$ iff $t\leq s$, $D \subseteq C$ and $t \setminus s \subseteq C$. Formally, this does not follow Definition \ref{arboreal}, but we can easily identify conditions $(s,C)$ with trees $T_{(s,C)}$ on $\kulk$ defined by $t \in T_{(s,C)}$ iff $\ran(t) \subseteq s \cup C$.

\item \emph{$\kappa$-Silver forcing} $\IV_\kappa$. If $\kappa$ is inaccessible, let $\IV_\kappa$ consist of $\kappa$-Sacks-trees $T$ on $2^{{<}\kappa}$ which are \emph{uniform}, i.e., for $s,t \in T$, if $|s|=|t|$ then $s \cc \left<i\right> \in T$ iff $t \cc \left<i\right> \in T$. Alternatively, we can view conditions of $\IV_\kappa$ as functions $f: \kappa \to \{0, 1, \{0,1\}\}$, such that $f(i) = \{0,1\}$ holds for all $i \in C$ for some club $C \subseteq \kappa$, ordered by $g \leq f$ iff $\forall i \: (f(i) \in \{0,1\} \to g(i) = f(i))$.
\end{enumerate} \end{Example} 

The generalized $\kappa$-Sacks forcing was introduced and studied by Kanamori in \cite{KanamoriSacks}, and the $\kappa$-Miller forcing is its natural variant, studied e.g. by Friedman and Zdomskyy in \cite{FriedmanZdomskyyMiller}. The reason we require the trees to be ``closed under splitting-nodes'' (2(b) and 3(b)) is to ensure that the resulting forcings are ${<}\kappa$-closed. The property called ``continuous club-splitting'' might seem ad hoc, but it is necessary to show that a version of Axiom A holds for the iteration, see \cite{FriedmanZdomskyyMiller}. We should note that other generalizations of Miller forcing have also been considered, see e.g. \cite{OtherMiller}.

$\kappa$-Silver is a natural generalization of Silver forcing, but   the standard proof of Axiom A only works for inaccessible $\kappa$. 

 $\kappa$-Laver and $\kappa$-Mathias are, again, natural generalizations of their classical counterparts; however, since we require the trees to split into club-many successors at all branches above the stem,  any two $\kappa$-Laver and $\kappa$-Mathias conditions with the same stem are compatible, so both $\IL_\kappa$ and $\IR_\kappa$ are $\kappa^+$-centered and hence satisfy the $\kappa^+$-c.c. Therefore they are perhaps more reminiscent of the classical \emph{Laver-with-filter} and \emph{Mathias-with-filter} forcings on $\ww$, rather than the actual Laver and Mathias forcing posets. Note that if we would drop club-splitting from the definition and only require stationary or $\kappa$-sized splitting instead, we would lose ${<}\kappa$-closure of the forcing.

\begin{Remark}  One notion  conspicuous by its absence from Example \ref{ForcingExamples} is \emph{random forcing}. To date, it is not entirely clear how random forcing should properly be generalized to uncountable $\kappa$. Recently Shelah proposed a  definition for $\kappa$ weakly compact, and other people have attempted to find suitable definitions, for example Laguzzi in \cite[Chapter 3]{LaguzziThesis}. However, a consensus on the correct definition for arbitrary $\kappa$ has not been reached so far, so in this work we choose to avoid random forcing, as well as the concept \emph{null ideal} and \emph{Lebesgue measurability}. \end{Remark}

 The following definition is based on  \cite[Definition 2.6 and Definition 2.8]{Ik10}. Let $\IP$ be a fixed $\kappa$-tree-like forcing.

\begin{Def} Let $A$ be a subset of $\kk$ or $\dk$. Then \begin{enumerate}
\item $A$ is \emph{$\IP$-null} iff $\forall T \in \IP \: \exists S \leq T$ such that $[S] \cap A = \varnothing$. We denote the ideal of $\IP$-null sets by $\N_\IP$
\item $A$ is \emph{$\IP$-meager} iff it is a $\kappa$-union of $\IP$-null sets. We denote the $\kappa$-ideal of $\IP$-meager sets by $\I_\IP$.
\item $A$ is \emph{$\IP$-measurable} iff $\forall T \in \IP \: \exists S \leq T$ such that $[S] \subseteq^* A$ or $[S] \cap A =^* \varnothing$, where $\subseteq^*$ and $=^*$ refers to ``modulo $\I_\IP$''. \end{enumerate} \end{Def}

For a wide class of tree-like forcing notions, the clause ``modulo $\I_\IP$'' can be eliminated from the above definition: see Lemma \ref{lemmings} (2).

\medskip \subsection{Regularity of Borel sets} \label{32}

In $\ww$, it is not hard to prove that if $\IP$ is proper then all analytic sets are $\IP$-measurable, using  forcing-theoretic arguments and absoluteness techniques (see e.g. \cite[Proposition 2.2.3]{KhomskiiThesis}). These methods are generally not available in the generalized setting. However, we would still like to know that, at least, all Borel subsets of $\kk$ are $\IP$-measurable for all reasonable examples of $\IP$.

\begin{Remark} \label{RemarkoBarko} Basic open sets are $\IP$-measurable for all $\IP$. To see this, let $[s]$ be basic open and $T \in \IP$. If $s \in T$ then by Definition \ref{arboreal} $T {\thru} s \in \IP$ and $[T {\thru} s] \subseteq [s]$, otherwise $[T {\thru} s] \cap [s] = \varnothing$. \end{Remark}

Since being $\IP$-measurable is clearly closed under complements, it remains to verify closure under $\kappa$-sized unions and intersections. For that we introduce some definitions that help to simplify the notion of $\IP$-measurability, and moreover will play a crucial role for the rest of this paper.

\begin{Def} \label{AxiomA} Let $\IP$ be a $\kappa$-tree-like forcing notion on $\kk$ or $\dk$. Then we say that:

\begin{enumerate}
\item  $\IP$ is \emph{topological} if $\{[T] \mid T \in \IP\}$ forms a topology base for $\kk$ (i.e., for all $S,T \in \IP$, $[S] \cap [T]$ is either empty or contains $[R]$ for some $R \in \IP$).


\item $\IP$ satisfies \emph{Axiom A} iff there are orderings $\{\leq_\alpha \mid \alpha<\kappa\}$, with $\leq_0 = \leq$, satisfying:
\begin{enumerate}
\item $T \leq_\beta S$ implies $T \leq_\alpha S$, for all $\alpha \leq \beta$.
\item If $\left<T_\alpha \mid \alpha < \lambda\right>$ is a sequence of conditions, with $\lambda \leq \kappa$ (in particular $\lambda = \kappa$) satisfying
$$T_\beta \leq_\alpha T_{\alpha} \text{   for all }\alpha\leq\beta,$$ then there exists $T \in \IP$ such that $T \leq_\alpha T_\alpha$ for all $\alpha<\lambda$.
\item For all $T \in \IP$, $D$ dense below $T$, and $\alpha < \kappa$, there exists an $E \subseteq D$ and $S \leq_\alpha T$ such that $|E| \leq \kappa$ and $E$ is predense below $S$. \end{enumerate}

\item $\IP$ satisfies \emph{Axiom A$^*$} if 2 above holds, but in 2 (c) we additionally require that ``$[S] \subseteq \bigcup \{[T] \mid T \in E\}$''. 
\end{enumerate} \end{Def}

\begin{Example} In Example \ref{ForcingExamples}, $\kappa$-Cohen, $\kappa$-Laver and $\kappa$-Miller are topological. By Fact \ref{FactusCactus},  $\kappa$-Miller and $\kappa$-Sacks satisfy Axiom A, and it is not hard to see that in fact  they satisfy Axiom A$^*$ as well (a direct consequence of the construction). Assuming $\kappa$ is inaccessible, a  generalization of the classical proof shows that $\kappa$-Silver also satisfies Axiom A$^*$. \end{Example}

%
%
%
%
%

\vbox{
\begin{Lem} $\;$ \ \label{lemmings}

\begin{enumerate}
\item If $\IP$ is topological then a set $A$ is $\IP$-measurable iff it satisfies the property of Baire in the topology generated by $\IP$. In particular, all Borel sets are $\IP$-measurable.

\item If $\IP$ satisfies Axiom A$^*$ then $\N_\IP = \I_\IP$, and consequently a set $A$ is $\IP$-measurable iff $\forall T \in \IP \: \exists S \leq T \: ([S] \subseteq A$ or $[S] \cap A = \varnothing)$ $($i.e., we can forget about ``modulo $\I_\IP$''$)$. Moreover, the collection of $\IP$-measurable sets is closed under $\kappa$-unions and $\kappa$-intersections.   \end{enumerate} \end{Lem}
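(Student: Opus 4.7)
For part (1), the key is that when $\IP$ is topological, $\{[T] \mid T \in \IP\}$ is a base for a topology $\tau_\IP$ on $\kk$ (or $\dk$). Unfolding the definitions, $N$ is $\IP$-null iff every basic open $[T]$ of $\tau_\IP$ contains a basic open $[S]$ disjoint from $N$, which is exactly the definition of nowhere dense in $\tau_\IP$; hence $\I_\IP$ coincides with the $\tau_\IP$-meager ideal, and $\IP$-measurability is precisely the Baire property in $\tau_\IP$ (in its base-wise formulation). Now basic opens of the standard topology are $\IP$-measurable by Remark~\ref{RemarkoBarko}, so they have BP in $\tau_\IP$; since BP is closed under complements and $\kappa$-unions by the usual argument (if $A_\alpha = U_\alpha \triangle M_\alpha$ with $M_\alpha$ meager, then $\bigcup_\alpha A_\alpha = (\bigcup_\alpha U_\alpha) \triangle N$ with $N \subseteq \bigcup_\alpha M_\alpha$), every standard Borel set is $\IP$-measurable.

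For part (2), the inclusion $\N_\IP \subseteq \I_\IP$ is trivial; the work is in $\I_\IP \subseteq \N_\IP$. Given $\IP$-null sets $\{A_\alpha \mid \alpha < \kappa\}$ and $T \in \IP$, each $D_\alpha := \{R \in \IP \mid [R] \cap A_\alpha = \varnothing\}$ is dense in $\IP$. I would build a fusion sequence $T = T_0 \geq_0 T_1 \geq_1 \cdots$ by applying Axiom A$^*$ clause (c) at stage $\alpha$ to $D_\alpha$ below $T_\alpha$ to obtain $T_{\alpha+1} \leq_\alpha T_\alpha$ and $E_\alpha \subseteq D_\alpha$ with $|E_\alpha| \leq \kappa$ and $[T_{\alpha+1}] \subseteq \bigcup_{R \in E_\alpha}[R]$, taking unions via clause (b) at limits. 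A final application of (b) with $\lambda = \kappa$ gives $S \in \IP$ with $S \leq_\alpha T_\alpha$ for all $\alpha < \kappa$; in particular $S \leq T_{\alpha+1}$, so $[S] \subseteq [T_{\alpha+1}] \subseteq \bigcup_{R \in E_\alpha}[R]$, and every $x \in [S]$ lies in some $[R]$ with $R \in D_\alpha$, hence $x \notin A_\alpha$. Thus $[S] \cap \bigcup_\alpha A_\alpha = \varnothing$.

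Once $\N_\IP = \I_\IP$ is established, the sharper characterization of $\IP$-measurability is immediate: if $[S] \cap A \in \N_\IP$, refine $S$ to $S' \leq S$ with $[S'] \cap A = \varnothing$, and symmetrically for $[S] \setminus A$. Closure under complements is built into the symmetric form. For closure under $\kappa$-unions, given $\IP$-measurable $\{A_\alpha\}$ and $T \in \IP$, either some $\alpha$ and $S \leq T$ satisfy $[S] \subseteq A_\alpha$ (so $[S] \subseteq \bigcup_\beta A_\beta$), or else for every $\alpha$ the set $\{R \leq T \mid [R] \cap A_\alpha = \varnothing\}$ is dense below $T$, and the same fusion construction below $T$ produces $S \leq T$ disjoint from every $A_\alpha$. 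Intersections follow by de Morgan.

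The chief technical point is ensuring the extra inclusion $[T_{\alpha+1}] \subseteq \bigcup_{R \in E_\alpha}[R]$ supplied by Axiom A$^*$ clause (c) is preserved through limits and the final fusion, but this is automatic from the nesting $[S] \subseteq [T_{\alpha+1}]$ enforced by clauses (a) and (b). This preservation is exactly the feature distinguishing Axiom A$^*$ from ordinary Axiom A, and is what forces $\N_\IP = \I_\IP$ rather than merely a predense covering that leaves residual meager error.
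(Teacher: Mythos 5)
Your proposal is correct and follows essentially the same route as the paper: in part (1) you identify $\N_\IP$ and $\I_\IP$ with the nowhere dense and meager ideals of the $\IP$-topology and transfer Borel measurability via Remark~\ref{RemarkoBarko}, and in part (2) you run a fusion along clause (c) of Axiom A$^*$, exactly as the paper does. The one place you are terser is the nontrivial direction of (1) --- that the localized (``base-wise'') Baire property implies the standard $A \triangle O$ form --- which you invoke as a known equivalence, whereas the paper proves it explicitly via maximal pairwise-disjoint families (the Banach-category argument of its Claims 1 and 2); everything else, including the closure under $\kappa$-unions and the elimination of ``modulo $\I_\IP$'', matches the paper's argument.
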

}

The proofs are essentially analogous to the classical situation, but let us present them anyway since they are not widely known. 

\begin{proof} 1. First of all, notice that if $\IP$ is topological then $\N_\IP$ is exactly the collection of nowhere dense sets in the $\IP$-topology and $\I_\IP$ is exactly the ideal of  meager sets in the $\IP$-topology. 

\p First assume $A$ satisfies the $\IP$-Baire property, then let $O$ be an open set in the $\IP$-topology such that $A \triangle O$ is $\IP$-meager. Given any $T \in \IP$, we have two cases: if $[T] \cap O = \varnothing$ then we are done since $[T] \cap A =^* \varnothing$. If $[T] \cap O$ is not empty then there exists a $S \leq T$ such that $[S] \subseteq [T] \cap O$. Then $[S] \subseteq^* A$ holds, so again we are done.

\p The converse direction is somewhat more involved (cf. \cite[Theorem 8.29]{Kechris}). Assume $A$ is $\IP$-measurable. Let \begin{itemize}
\item $D_1$ be a maximal mutually disjoint subfamily of $\{T \in \IP \mid [T] \subseteq^* A\}$,
\item $D_2$ be a maximal mutually disjoint subfamily of $\{T \in \IP \mid [T] \cap A =^* \varnothing\}$, and
\item $D := D_1 \cup D_2$. \end{itemize}
Also write $O_1 := \bigcup\{[T] \mid T \in D_1\}$, $O_2 := \bigcup\{[T] \mid T \in D_2\}$ and $O := O_1 \cup O_2$. We will show that $A \triangle O_1$ is $\IP$-meager.

\s{Claim 1.} $O$ is $\IP$-open dense. 

\begin{proof}[Proof of Claim] Start with any $T$. By assumption there exists $S \leq T$ such that $[S] \subseteq^* A$ or  $[S] \cap A =^* \varnothing$. In the former case, note that by maximality, there must be some $S' \in D_1$ such that $[S] \cap [S'] \neq \varnothing$. Then find $S''$ such that $[S''] \subseteq [S] \cap [S']$. Then $[S''] \subseteq O_1$. Likewise, in the case $[S] \cap A =^* \varnothing$ we find a stronger $S''$ with $[S''] \subseteq O_2$. 
\renewcommand{\qed}{\hfill $\Box$ (Claim 1).} \end{proof}

\s{Claim 2.} \emph{$A \cap O_2$ and $O_1 \setminus A$ are $\IP$-meager.}

\begin{proof}[Proof of Claim] Since the proof of both statements is analogous, we only do the first.

Enumerate $D_2 := \{T_\alpha \mid \alpha < |\kk|\}$. For each $\alpha$, let $\{X^\alpha_i \mid i<\kappa\}$ be a collection of $\IP$-nowhere dense sets, such that $[T_\alpha] \cap A = \bigcup_{i<\kappa} X^\alpha_i$. 
 Now, for every $i < \kappa$, let $Y_i := \bigcup_{\alpha < |\kk|} X^\alpha_i$. We will show that each $Y_i$ is $\IP$-nowhere dense. So fix $i$ and pick any $T \in \IP$: if $[T]$ is disjoint from all $[T_\alpha]$'s then clearly also $[T] \cap Y_i = \varnothing$. Else, let $T_\alpha$ be such that $[T] \cap [T_\alpha] \neq \varnothing$. Then there exists $S \leq T$ such that $[S] \subseteq [T] \cap [T_\alpha]$. By assumption, $[T_\alpha]$ is disjoint from all $[T_\beta]$'s, and hence from all $X^\beta_i$'s, for all $\beta \neq \alpha$. Next, since $X^\alpha_i$ is $\IP$-nowhere dense, we can find $S' \leq S$ such that $[S'] \cap X^\alpha_i = \varnothing$. But then $[S'] \cap Y_i = \varnothing$, proving that $Y_i$ is indeed $\IP$-nowhere dense.
 
\p Now clearly $O_1 \cap A$ is completely covered by the collection $\{Y_i \mid i<\kappa\}$, therefore it is meager. \renewcommand{\qed}{\hfill $\Box$ (Claim 2).} \end{proof}

Now it follows from Claim 1 and Claim 2 that $A \triangle O_1 = (O_1 \setminus A) \cup (A \cap O_2) \cup (A \setminus O	)$ is a union of three meager sets, hence it is meager.

\p This proves that the set $A$ has the property of Baire in the topology generated by $\IP$. 

\p  2. Assume $\IP$ satisfies Axiom A$^*$, and let $\{A_i \mid i<\kappa\}$ be a collection of $\IP$-null sets. We want to show that $A := \bigcup_{i<\kappa}A_i$ is also $\IP$-null. For each $i$ let $D_i := \{T \mid [T] \cap A_i = \varnothing\}$. By assumption, each $D_i$ is dense. Now let $T_0 \in \IP$ be given. Using Axiom A$^*$ find, inductively, a sequence $\{T_i \mid i<\kappa\}$ as well as a sequence $\{ E_i \subseteq D_i \mid i < \kappa\}$ such that \begin{itemize}
\item $T_j \leq_i T_i$ for all $i \leq j$ and 
\item $[T_i] \subseteq \bigcup \{[T] \mid T \in E_i\}$ for all $i$. \end{itemize}
This can always be done by condition (c) of Axiom A$^*$. Then, by condition (b) there is a $T$ such that $T \leq T_i$ for all $i$, and hence, $[T] \subseteq D_i$ for all $i$. In particular, $[T] \cap A_i = \varnothing$ for all $i < \kappa$, proving that $\bigcap A_i$ is $\IP$-null. 

\p For the second claim, it suffices to show closure under $\kappa$-unions. Consider a collection $\{A_i \mid i<\kappa\}$ of $\IP$-measurable sets, and let $T \in \IP$. We must find $S \leq T$ such that $[S] \subseteq \bigcup_{i < \kappa} A_i$ or $[S] \cap \bigcup_{i<\kappa} A_o = \varnothing$. If for at least one $i < \kappa$, we can find $S \leq T$ such that $[S] \subseteq A_i$, we are done. If that's not the case, then notice that each $A_i$ must be in $\N_\IP$, since it is $\IP$-measurable. But by the above this implies $\bigcup_{i<\kappa} A_i \in \N_\IP$, so indeed we can find $S \leq T$ with $[S] \cap \bigcup_{i<\kappa} A_i = \varnothing$. \qedhere  \end{proof}

\begin{Cor} \label{Borel} If $\IP$ is either topological or satisfies Axiom A$^*$ then all Borel sets are $\IP$-measurable. \end{Cor}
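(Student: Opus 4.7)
The plan is to deduce the corollary immediately from Remark \ref{RemarkoBarko} together with the appropriate part of Lemma \ref{lemmings}, split into two cases according to the hypothesis.

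In the Axiom A$^*$ case, I would argue as follows. Lemma \ref{lemmings}(2) explicitly asserts that the collection of $\IP$-measurable sets is closed under $\kappa$-unions and $\kappa$-intersections; closure under complements is immediate from the symmetric form of the definition of $\IP$-measurability. Combined with Remark \ref{RemarkoBarko}, which gives $\IP$-measurability of every basic open set $[s]$ for $s \in \klk$ or $s \in \dlk$, this shows that the $\IP$-measurable sets form a $\kappa$-complete algebra containing all basic opens. Since the Borel sets are by definition the smallest such $\kappa$-complete algebra, every Borel set is $\IP$-measurable.

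In the topological case, I would invoke Lemma \ref{lemmings}(1), which identifies $\IP$-measurability with the property of Baire in the topology $\tau_\IP$ generated by $\{[T] \mid T \in \IP\}$. One small point to record explicitly is that $\tau_\IP$ refines the standard topology: given any $s \in \klk$ (or $\dlk$) and any $x \in [s]$, pick any $T \in \IP$ with $x \in [T]$; then $s \subseteq x$, so $s \in T$, and Definition \ref{arboreal} gives $T {\thru} s \in \IP$ with $x \in [T {\thru} s] \subseteq [s]$, so $[s]$ is $\tau_\IP$-open. Consequently every standard Borel set is Borel in $\tau_\IP$, and as Lemma \ref{lemmings}(1) already records, Borel sets in any topology automatically have the Baire property; hence they are $\IP$-measurable.

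The argument is essentially bookkeeping and there is no real obstacle once Lemma \ref{lemmings} is available; the one place where one must be a little careful is the refinement check in the topological case, since if $\tau_\IP$ did not dominate the standard topology the standard Borel sets would not automatically be $\tau_\IP$-Borel. All other steps (closure of Baire-property sets under complements and $\kappa$-unions, De Morgan for $\kappa$-intersections) are routine and are already built into Lemma \ref{lemmings}.
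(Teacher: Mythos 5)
Your proof is correct and is essentially the argument the paper intends: the corollary is stated without proof precisely because it follows immediately from Remark \ref{RemarkoBarko} together with Lemma \ref{lemmings} (part (1) for the topological case, part (2) plus closure under complements for the Axiom A$^*$ case). Your explicit check that the $\IP$-topology refines the standard topology is a worthwhile detail that the paper leaves implicit.
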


\medskip
\subsection{Regularity of $\SIGMA^1_1$ sets} \label{33}

Let us abbreviate ``all sets of complexity $\G$ are $\IP$-measurable'' by ``$\G(\IP)$''. In the $\ww$ case, ZFC proves $\SIGMA^1_1(\IP)$, and by symmetry $\PI^1_1(\IP)$, but $\SIGMA^1_2(\IP)$ and $\DELTA^1_2(\IP)$ are independent of ZFC. But in the case that $\kappa > \omega$  things are dramatically different since by the Halko-Shelah result (Theorem \ref{HalkoShelah})  $\SIGMA^1_1(\IC_\kappa)$ is false, i.e., the Baire property fails for analytic sets. We attempt to find the essential requirements on $\IP$ which would allow us to generalize this proof and show, in ZFC, that $\SIGMA^1_1(\IP)$ fails, i.e., that there is an analytic set which is not $\IP$-measurable. It is most convenient to formulate this requirement in terms of the $\kappa$-Sacks and $\kappa$-Miller forcing notions, see Example \ref{ForcingExamples}.

\begin{Thm} \label{ClubCounterexample} Let $\IP$ be a tree-like forcing notion on $\dk$ whose conditions are $\kappa$-Sacks trees, or a tree-like forcing notion on $\kk$ whose	 conditions are $\kappa$-Miller trees. Then $\SIGMA^1_1(\IP)$ fails. \end{Thm}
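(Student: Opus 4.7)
The plan is to generalise the Halko--Shelah diagonal argument (Theorem \ref{HalkoShelah}) to the tree-forcing framework. The $\SIGMA^1_1$ witness will be the club filter $C \subseteq \dk$ from Fact \ref{ClubFilter} in the Sacks case, and the natural analog $A := \{x \in \kk : \ran(x) \in C\}$ in the Miller case. The key claim is that for every $T \in \IP$ and every $\IP$-meager set $M$, there exist branches $x \in [T] \cap C$ and $y \in [T] \setminus C$ with $x,y \notin M$: this implies that $[T] \cap C$ and $[T] \setminus C$ are both not $\IP$-meager, so $\IP$-measurability of $C$ fails at every $T$.

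Both $x$ and $y$ are built by fusion inside $T$, diagonalising against a decomposition $M = \bigcup_{\alpha<\kappa} N_\alpha$ with each $N_\alpha \in \N_\IP$. Inductively one maintains a condition $T_\alpha \in \IP$ refining all earlier $T_\beta$, a node $t_\alpha \in T_\alpha$ extending all $t_\beta$, and a continuous increasing sequence $\delta_0<\delta_1<\dots$ in $\kappa$, with $[T_{\alpha+1}] \cap N_\alpha = \varnothing$ and $t_\alpha$ forcing value $1$ at every $\delta_\beta$ for $\beta \leq \alpha$ (in the construction of $x$). At a successor stage, $\IP$-nullity of $N_\alpha$ gives $T' \leq T_\alpha \thru t_\alpha$ in $\IP$ disjoint from $N_\alpha$; a splitting node $u \in T'$ of level $>\delta_\alpha$ exists by the Sacks structure of $T'$; set $\delta_{\alpha+1}:=|u|$, $t_{\alpha+1}:=u \cc \langle 1\rangle$, and $T_{\alpha+1}:= T' \thru t_{\alpha+1}\in \IP$ by the tree-like closure (Definition \ref{arboreal}).

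The limit step is the main obstacle. At a limit $\lambda<\kappa$, set $t_\lambda := \bigcup_{\alpha<\lambda} t_\alpha$ at level $\delta_\lambda := \sup_{\alpha<\lambda}\delta_\alpha$. The $<\kappa$-closure of each $T_\beta$ gives $t_\lambda \in T_\beta$; the crucial structural condition ``limits of splitting nodes are splitting nodes'' (Example \ref{ForcingExamples}(2)(b), and (3)(b) with the continuous club-splitting clause in the Miller case) ensures $t_\lambda$ is a splitting node of $T_\beta$ at level $\delta_\lambda$, so $t_\lambda \cc \langle 1 \rangle \in T_\beta$ for every $\beta<\lambda$. One then needs a common lower bound $T_\lambda \in \IP$ containing $t_\lambda \cc \langle 1\rangle$ — a Sacks-/Miller-style fusion move that is exactly what Fact \ref{FactusCactus} provides for $\IS_\kappa$ and $\IM_\kappa$. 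Committing value $1$ at $\delta_\lambda$ itself keeps $\{\delta_\alpha\}$ closed under its own limits, so the final sequence is a club in $\kappa$.

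The resulting $x := \bigcup_{\alpha<\kappa} t_\alpha$ lies in $[T] \setminus M$ and $\{\delta_\alpha\}$ is a club contained in its $1$-set, so $x \in C$. To produce $y \in [T] \setminus C$ avoiding $M$, fix in advance a stationary co-stationary $S \subseteq \kappa$ and run the same construction, committing to value $0$ at $\delta_{\alpha+1}$ whenever $\delta_{\alpha+1} \in S$. Then $S \cap \{\delta_\alpha\}$ is stationary and lies in the $0$-set of $y$, so the $1$-set of $y$ cannot contain a club, whence $y \notin C$. The Miller case is analogous, replacing ``value $1$'' by ``$\delta_\alpha \in \ran$'' (using the club-splitting club at $u$ to pick the desired successor ordinal) and, for the counterexample branch, arranging the chosen ordinals to lie in $\kappa \setminus S$.
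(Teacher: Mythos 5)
Your overall strategy is the paper's: the club filter as the $\SIGMA^1_1$ witness on $\dk$, and a diagonal construction of a branch through a given condition that steers the values at splitting levels into or out of the filter while dodging the pieces of a $\IP$-meager decomposition. Two steps, however, do not go through as written.

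The Miller case. Your witness $A=\{x\in\kuk : \ran(x)\in C\}$ does not cooperate with the construction. To put the level $\delta_{\alpha+1}=|u|$ into $\ran(x)$ you would need $x(j)=\delta_{\alpha+1}$ for some $j\le\delta_{\alpha+1}$ (as $x$ is strictly increasing, $x(j)\ge j$); but the entries of $x$ below $|u|$ are already determined by $u$, and the entry at position $|u|$ must exceed $\sup\ran(u)$, which in general far exceeds $|u|$. The ordinal $\delta_{\alpha+1}$ is only revealed at stage $\alpha$ and has typically been overshot long before. Worse, there are $\kappa$-Miller trees forcing every branch to be discontinuous at every limit (arrange the club of allowed successor values at each node to omit $\sup\ran(s)$); the range of such a branch misses the club of its own limit points, so $[T]\cap A=\varnothing$ and $A$ is no counterexample at all. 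The paper instead fixes a stationary, co-stationary $S\subseteq\kappa$ and uses $C_S=\{x : \exists c \text{ club } \forall i\in c\ (x(i)\in S)\}$: since splitting nodes are club-splitting, the club of available next \emph{values} meets both $S$ and $\kappa\setminus S$, so at each splitting level one freely steers the value in or out of $S$ --- no control over which ordinals occur in the range is needed.

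The limit stage. The decreasing sequence $T_0\ge T_1\ge\cdots$ you build is not a fusion sequence: $T_{\alpha+1}=T'\thru t_{\alpha+1}$ has stem (at least) $t_{\alpha+1}$ and thus destroys every splitting node of $T_\alpha$ below level $|t_{\alpha+1}|$, so $T_{\alpha+1}\not\le_\alpha T_\alpha$ for the Axiom A orderings and Fact \ref{FactusCactus} provides no lower bound at limits. A lower bound need not exist: the refinements $T'\le T_\alpha\thru t_\alpha$ witnessing nullity of $N_\alpha$ are arbitrary and can cumulatively thin the tree above $t_\lambda\cc\left<1\right>$ down to a single branch, exactly as in the classical failure of $\sigma$-closure for Sacks forcing. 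The paper sidesteps this by never shrinking the condition: it keeps the one tree $T$ and only extends nodes, choosing $s'_i$ with $[T\thru s'_i]\cap X_i=\varnothing$. Equivalently, since $\IS_\kappa$ and $\IM_\kappa$ satisfy Axiom A$^*$, Lemma \ref{lemmings} (2) gives $\N_\IP=\I_\IP$, so one may assume outright that $[T]\subseteq C$ or $[T]\cap C=\varnothing$ for a single condition and then run your branch construction inside that tree with no shrinking at all. (A smaller point: for $y\notin C$ in the Sacks case no stationary set is needed --- put $0$ at every limit splitting level, so the $0$-set of $y$ contains a club; as stated, your commitment to $0$ only at levels $\delta_{\alpha+1}$ is insufficient, since the successor elements of a club form a non-stationary set.)
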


\begin{proof} Let's start with the first case. Recall the club-filter $C$ from Fact \ref{ClubFilter}, considered as a subset of $\dk$. If $C$ were $\IP$-measurable then, in particular, we would have a $T \in \IP$ such that $[T] \subseteq^* C$ or $[T] \cap C =^* \varnothing$. First deal with the former case: let $\{X_i \mid i<\kappa\}$ be $\IP$-null sets such that $[T] \setminus C = \bigcup_{i < \kappa} X_i$. Inductively, construct an increasing sequence of splitting nodes in $T$ in such a way that: \begin{itemize}
\item $s_{0} := \stem(T)$,
\item given $s_i$, first extend to $s'_i \in T$ such that $[T{\thru}s'_i] \cap X_i = \varnothing$, then extend further to a splitting node $s_{i+1} \in T$. 
\item at limit stages $\lambda$, note that $s'_\lambda := \bigcup_{i<\lambda}s_i$ is a splitting node by assumption. Let $s_\lambda := s'_\lambda \cc \left<0\right>$.\end{itemize}
Now let $x := \bigcup_{i<\kappa}s_i$. Then $x$ is a branch through $T$, $x \notin X_i$ for all $i$, and moreover, there exists a club $c \subseteq \kappa$ such that $x(i)=0$ for all $i \in c$. In particular, $x \notin C$---contradiction.

\p To deal with the second case that $[T] \cap C =^* \varnothing$, proceed analogously except that at limit stages, pick $s_\lambda := s'_\lambda \cc \left<1\right>$. Then it will follow that $x \in C$.

\p When $\IP$ is a tree-like forcing on $\kk$ whose conditions are $\kappa$-Miller trees, we apply the same argument, but using the following variant of the club-filter: let $S$ be a  stationary, co-stationary subset of $\kappa$ and define
$$C_S := \{a  \in \kk \mid \exists c \subseteq \kappa \text{ club  such that } \forall i \in c \:( x(i) \in S)\}.$$ Clearly this set is $\SIGMA^1_1$ by the same argument as in Fact \ref{ClubFilter}. Proceed exactly as before, choosing members from $S$ or from $\kappa \setminus S$ at limit stages, as desired, which can be achieved using the club-splitting of the trees.
%
%
\end{proof}

 Notice that if we want a $\kappa$-tree-like forcing on $\dk$ to be ${<}\kappa$-closed, it must be a refinement of $\IS_\kappa$, so the above theorem is optimal for ${<}\kappa$-closed tree-like forcings on $\dk$.   For trees on $\kk$, the above theorem is not optimal, although it does seem to take care of many natural examples (for instance those from  Example \ref{ForcingExamples}). 
 A more optimal  version of Theorem \ref{ClubCounterexample} could go according to the following definition: 

\begin{Def} Fix a sequence $\vec{S} := \left<S_i \mid i< \kappa\right>$ of subsets of $\kappa$. For a tree $T$, say that $t \in T$ is $\vec{S}$-\emph{splitting} if $\{\alpha \mid t \cc \left<\alpha\right> \in T\} \cap S_i \neq \varnothing$ and $\{\alpha \mid t \cc \left<\alpha\right> \in T\} \cap (\kappa \setminus S_i) \neq \varnothing$, where $i = |t|$. Say that $T$ is an $\vec{S}$-\emph{splitting tree} if

\begin{itemize}
\item for every $t \in T$ there exists $s \supseteq t$ in $T$ which is $\vec{S}$-splitting, and
\item for every increasing sequence $\left<s_i \mid i<\lambda\right>$ of $\vec{S}$-splitting nodes of $T$, the union $s := \bigcup_{i<\lambda} s_i$ is also an	 $\vec{S}$-splitting node of $T$. \end{itemize}
\end{Def}


\begin{Cor} If $\IP$ is a $\kappa$-tree-like forcing such that, for some sequence $\vec{S}$, every tree in $\IP$ is $\vec{S}$-splitting, then $\SIGMA^1_1(\IP)$ fails. \end{Cor}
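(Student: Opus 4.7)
The plan is to mimic the proof of Theorem \ref{ClubCounterexample} verbatim, replacing the club filter (and its relative $C_S$) with the $\vec{S}$-analogue
$$D_{\vec{S}} \;:=\; \{x \in \kk \mid \exists c \subseteq \kappa \text{ club such that } \forall i \in c \;(x(i) \in S_i)\}.$$
First I would verify $D_{\vec{S}} \in \SIGMA^1_1$ by exactly the argument used in Fact \ref{ClubFilter}: ``$c$ is club'' is a topologically closed condition on $c \in \dk$, so $D_{\vec{S}}$ is the projection to $\kk$ of a closed subset of $\kk \times \dk$.

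Next, assume toward a contradiction that $D_{\vec{S}}$ is $\IP$-measurable; then there is $T \in \IP$ with either $[T] \subseteq^* D_{\vec{S}}$ or $[T] \cap D_{\vec{S}} =^* \varnothing$. In the former case, fix $\IP$-null sets $\{X_i \mid i < \kappa\}$ covering $[T] \setminus D_{\vec{S}}$ and recursively build an increasing sequence of $\vec{S}$-splitting nodes $s_0 \subseteq s_1 \subseteq \dots$ in $T$: start with $s_0 := \stem(T)$; at a successor stage, first extend $s_i$ to some $s'_i \in T$ with $[T{\thru}s'_i] \cap X_i = \varnothing$, then extend $s'_i$ further to an $\vec{S}$-splitting node $s_{i+1} \in T$ (possible by the first clause of the definition); at a limit $\lambda$ the union $s'_\lambda := \bigcup_{i < \lambda} s_i$ is $\vec{S}$-splitting by the second clause, so we may choose $\alpha_\lambda \in \kappa \setminus S_{|s'_\lambda|}$ with $s'_\lambda \cc \langle \alpha_\lambda \rangle \in T$, and set $s_\lambda := s'_\lambda \cc \langle \alpha_\lambda \rangle$.

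Let $x := \bigcup_{i < \kappa} s_i \in [T]$. By construction $x \notin X_i$ for any $i$, so $x \in D_{\vec{S}}$ by the assumption that $[T] \setminus D_{\vec{S}} \subseteq \bigcup_i X_i$. But the set of heights $E := \{|s'_\lambda| \mid \lambda < \kappa \text{ limit}\}$ is club in $\kappa$ (the map $\lambda \mapsto |s'_\lambda|$ is strictly increasing and continuous at limits of limits), and at every $i \in E$ we arranged $x(i) \notin S_i$; hence $\{i < \kappa \mid x(i) \in S_i\}$ is disjoint from a club and so contains no club, contradicting $x \in D_{\vec{S}}$. The other case $[T] \cap D_{\vec{S}} =^* \varnothing$ is fully symmetric: choose instead $\alpha_\lambda \in S_{|s'_\lambda|}$ at limit stages, so that the same $E$ now witnesses $x \in D_{\vec{S}}$.

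The main content beyond the bookkeeping is the observation that $\vec{S}$-splitting at limit nodes $s'_\lambda$ allows us to freely pick whether the next coordinate of $x$ lands in $S_{|s'_\lambda|}$ or in its complement, precisely as club-splitting did in Theorem \ref{ClubCounterexample}. The only place I expect any care is checking that the height sequence $|s_i|$ behaves well enough for $E$ to be a club (strict monotonicity and continuity at limits-of-limits), but this is immediate from the recursion since $|s'_\lambda| = \sup_{i < \lambda} |s_i|$ at every limit $\lambda$.
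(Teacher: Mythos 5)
Your proof is correct and is exactly the argument the paper intends: the Corollary is stated without proof precisely because it is the evident adaptation of Theorem \ref{ClubCounterexample}, replacing the club filter by the set $D_{\vec{S}}$ and club-splitting nodes by $\vec{S}$-splitting nodes, with the choice of successor at each $\vec{S}$-splitting node steering $x(i)$ into or out of $S_i$ along a club of levels. Your additional checks (that the union of the $s_{i+1}$'s is a union of $\vec{S}$-splitting nodes, and that the set $E$ of controlled heights is club) are the right points to verify and go through as you describe.
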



In all the above examples, an essential property of the trees $T$ is that $\forall x \in [T]$, the set $\{i < \kappa \mid x \till i$ is a splitting node of $T\}$ forms a club on $\kappa$. Recent work of Philipp Schlicht \cite{SchlichtPSP} and Giorgio Laguzzi \cite{LaguzziGeneral}  suggests that this property is directly related to the existence of $\SIGMA^1_1$-counterexamples, since  for a version of Sacks-, Miller- and Silver-measurability where the trees are \emph{not} required to have this property, it is consistent that all projective sets are measurable. 

\medskip \subsection{Regularity of $\DELTA^1_1$ sets}  \label{34} With $\mathsf{Borel}(\IP)$ being provable in ZFC and  $\SIGMA^1_1(\IP)$ inconsistent, we are  left with the $\DELTA^1_1$-level. 

\begin{Lem}[Folklore] \label{folklore} If $V=L$ then $\DELTA^1_1(\IP)$ is false for  all tree-like $\IP$. \end{Lem}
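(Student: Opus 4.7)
The plan is to mimic the classical Bernstein construction in $L$, using the $\SIGMA^1_1$-good well-order $<_L$ of $\kk$ from Lemma \ref{Wellorder}. Under $V=L$ we have $|\kk| = 2^\kappa = \kappa^+$ and $|\IP| \leq \kappa^+$ (each $\IP$-condition is coded by an element of $\kk$). I would first enumerate $\IP = \langle T_\alpha \mid \alpha < \kappa^+ \rangle$ in $<_L$-order, and by transfinite recursion pick $x_\alpha$ to be the $<_L$-least element of $[T_\alpha] \setminus \{x_\beta, y_\beta \mid \beta < \alpha\}$ and $y_\alpha$ to be the next such element; this is possible since $|[T_\alpha]| = \kappa^+$ while fewer than $\kappa^+$ points have been used. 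Set $A := \{x_\alpha \mid \alpha < \kappa^+\}$; by construction, $A \cap [T] \neq \varnothing \neq [T] \setminus A$ for every $T \in \IP$.

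Next I would verify that $A$ is $\DELTA^1_1$ by adapting the proof of Lemma \ref{Wellorder}. The recursion defining $A$ is uniform and absolute to initial segments $L_\delta$ of the constructible hierarchy, so for any $x \in \kk$, the condition $x \in A$ holds iff some $L_\delta$ with $\delta < \kappa^+$ witnesses $x = x_\alpha$ at the corresponding stage. Encoding such an $L_\delta$ as the transitive collapse of a well-founded relation $E \subseteq \kappa \times \kappa$ --- whose well-foundedness is a closed condition in the generalized setting, as used in Lemma \ref{Wellorder} --- turns this into a $\SIGMA^1_1$ statement; the same works for $x \notin A$ (witnessed either by $x = y_\alpha$ or by $x$ being skipped entirely, both $L_\delta$-absolute once $x \in L_\delta$), giving both $\SIGMA^1_1$ and $\PI^1_1$ definitions of $A$.

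Finally, to see that $A$ fails to be $\IP$-measurable, I would exploit the Bernstein property. When $\IP$ is either topological or satisfies Axiom A$^*$, Lemma \ref{lemmings} eliminates the ``modulo $\I_\IP$'' clause from the definition of $\IP$-measurability, so $A \cap [T] \neq \varnothing \neq [T] \setminus A$ immediately contradicts that $A$ is $\IP$-measurable. For a fully general tree-like $\IP$ the recursion must be enriched to diagonalize against the (at most $\kappa^+$ many) $\IP$-meager sets, simultaneously enumerating pairs $(S_\alpha, M_\alpha)$ and at each stage requiring $x_\alpha, y_\alpha \in [S_\alpha] \setminus M_\alpha$ whenever $[S_\alpha] \notin \I_\IP$; this would guarantee that $[S] \cap A$ and $[S] \setminus A$ lie outside $\I_\IP$ for every $S \in \IP$ with $[S] \notin \I_\IP$. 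The main obstacle will be keeping this enriched construction $\DELTA^1_1$-definable, since one needs the predicate ``$M$ is $\IP$-meager'' to be sufficiently $L_\delta$-absolute; this should follow from the $\SIGMA^1_1$-good machinery applied to $\kappa$-sequences of $\IP$-null sets, but demands care in the coding.
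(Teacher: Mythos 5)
Your proposal is correct and is exactly the paper's approach: the paper's proof is a two-line appeal to the $\SIGMA^1_1$-good well-order of Lemma \ref{Wellorder} together with ``proceed as in the $\ww$-case,'' i.e.\ precisely the Bernstein-style diagonalization you carry out. Your write-up is in fact more careful than the paper's, since you correctly note that for forcings where the ideal $\I_\IP$ cannot be neglected one must also diagonalize against (Borel-coded) $\IP$-meager sets, a point the paper's sketch leaves implicit.
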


\begin{proof} Use the $\SIGMA^1_1$-good wellorder of the reals of $L$ from Lemma \ref{Wellorder}, and proceed as in the $\ww$-case, obtaining a $\DELTA^1_1$-counterexample as opposed to a $\DELTA^1_2$ one. \end{proof}

This is not the only method to produce $\DELTA^1_1$-counterexamples to $\IP$-measurability. A completely different method, innate to the generalized setting, is to produce models in which the club filter itself is $\DELTA^1_1$, see Lemma \ref{ClubDelta}.

\bigskip It is known that the Baire property on $\kk$ holds for $\DELTA^1_1$ sets in $\kappa^+$-product/iterations of $\kappa$-Cohen forcing, see e.g.  \cite[Theorem 49 (7)]{FriedmanHyttinenKulikov}. We would like to generalize this to other $\kappa$-tree-like forcings. First, we need the following technical result, a strengthening of the concept of 
\emph{$\kappa$-proper} (Definition \ref{proper}). This is again similar to the classical case.

\begin{Lem} \label{lemmichka} Let $\IP$ be $\kappa$-tree-like, and assume that $\IP$ either has the $\kappa^+$-c.c. or satisfies Axiom A$^*$. Then for every elementary submodel $M \prec \mathcal{H}_\theta$ of a sufficiently large $\mathcal{H}_\theta$, with $|M| = \kappa$ and $M^{{<}\kappa} \subseteq M$, and for every $T \in \IP \cap M$, there is $T' \leq T$ such that
$$[T'] \subseteq^* \{x \in \kk \mid x \text{ is $\IP$-generic over } M\}.$$
where $\subseteq^*$ means ``modulo $\I_\IP$'' and a $\kappa$-real $x$ is $\IP$-generic over $M$ if $\{S \in \IP \cap M \mid x \in [S]\}$ is a $\IP$-generic filter over $M$. \end{Lem}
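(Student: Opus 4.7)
Since $|M|=\kappa$ and $M^{<\kappa}\subseteq M$ (which in particular gives $\kappa\subseteq M$), I enumerate the dense subsets of $\IP$ lying in $M$ as $\langle D_\alpha\mid\alpha<\kappa\rangle$. For every $\alpha$ I will produce a predense subfamily $E_\alpha\subseteq D_\alpha$ with $E_\alpha\in M$ and $E_\alpha\subseteq M$ such that the ``non-hitting'' set $N_\alpha := \kk\setminus\bigcup\{[S]\mid S\in E_\alpha\}$ is $\IP$-null below an appropriate condition. The union $N:=\bigcup_{\alpha<\kappa}N_\alpha$ is then $\IP$-meager, and for every $x\in[T']\setminus N$ the set $G_x:=\{S\in\IP\cap M\mid x\in[S]\}$ meets every $D_\alpha$ (via some $S\in E_\alpha\subseteq D_\alpha\cap M$), so $G_x$ is $\IP$-generic over $M$. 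The two hypotheses differ only in how $E_\alpha$ and $T'$ are produced.

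In the $\kappa^+$-c.c.\ case, I let $E_\alpha$ be a maximal antichain contained in $D_\alpha$; the chain condition gives $|E_\alpha|\leq\kappa$, so by elementarity one may take $E_\alpha\in M$, and pulling out an enumeration of length ${\leq}\kappa$ inside $M$ shows $E_\alpha\subseteq M$. Maximality makes $E_\alpha$ predense in all of $\IP$: for any $R\in\IP$ some $S\in E_\alpha$ is compatible with $R$, and choosing $R'\leq R,S$ places $[R']\subseteq[S]$. Hence $N_\alpha\in\N_\IP$, so $N\in\I_\IP$ and I may simply set $T':=T$, obtaining $[T']\subseteq^*\{x : G_x$ meets every $D_\alpha\}$.

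In the Axiom~A$^*$ case, I carry out a fusion entirely inside $M$: set $T_0:=T$; at successor stage $\alpha$ apply clause (c) of Axiom~A$^*$ to $D_\alpha$ (which is dense below $T_\alpha$) to pick $E_\alpha\subseteq D_\alpha$ of size ${\leq}\kappa$ and $T_{\alpha+1}\leq_\alpha T_\alpha$ with $[T_{\alpha+1}]\subseteq\bigcup\{[S]\mid S\in E_\alpha\}$; at a limit $\lambda<\kappa$, use clause (b) to take a fusion $T_\lambda\leq_\beta T_\beta$ for all $\beta<\lambda$. The closure assumption $M^{<\kappa}\subseteq M$ keeps every partial sequence $\langle T_\beta,E_\beta\mid\beta<\lambda\rangle$ inside $M$, and elementarity keeps each individual choice in $M$. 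Set $T':=T_\kappa$; for any $x\in[T']$ and any $\alpha<\kappa$ one has $x\in[T_{\alpha+1}]\subseteq\bigcup\{[S]\mid S\in E_\alpha\}$, so some $S\in E_\alpha$ lies in $G_x$. Here $[T']$ itself, with no ``modulo $\I_\IP$'', consists of generics.

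\textbf{Main obstacle.} The delicate point I expect to wrestle with is verifying that $G_x$ is genuinely a \emph{filter}, not merely a subset of $\IP\cap M$ meeting every dense set: given $T_1,T_2\in G_x$ one needs a common refinement $T_3\in G_x$. I would enlarge the enumeration to include, for each pair $T_1,T_2\in\IP\cap M$, the dense set $\{T : T\leq T_1 \text{ and } T\leq T_2\}\cup\{T : T\perp T_1 \text{ or } T\perp T_2\}$, whose intersection with $G_x$ reduces the problem to excluding the ``incompatible'' alternative. This is automatic for topological $\IP$, where $[T_1]\cap[T_2]$ is covered by $\IP$-conditions, and can be arranged during the Axiom~A$^*$ fusion by an extra bookkeeping step ensuring that for each such pair either $[T_{\alpha+1}]\cap[T_1]\cap[T_2]=\varnothing$ or $[T_{\alpha+1}]\subseteq[R]$ for some $R\leq T_1,T_2$ in $\IP\cap M$.
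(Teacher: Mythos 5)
Your proof follows essentially the same route as the paper's: in the $\kappa^+$-c.c.\ case the paper also reduces genericity to avoiding the ($\kappa$-many, hence $\IP$-meager union of) $\IP$-null sets coded in $M$ arising from maximal antichains refining each dense set, and in the Axiom~A$^*$ case it performs exactly your fusion, obtaining $[T']\subseteq\bigcup\{[S]\mid S\in E_i\}$ for each $i$ with no ``modulo $\I_\IP$'' needed. The only substantive difference is your final paragraph: the paper simply concludes ``$x\in[S]$ for some $S\in A\cap M$, i.e.\ $x$ is $\IP$-generic over $M$'' and never verifies that $G_x$ is downward directed, so your worry is a legitimate gap in the paper's own write-up rather than in your reduction. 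Your fix is complete in the topological case (for pruned ${<}\kappa$-closed trees, $[R]\subseteq[S]$ does imply $R\subseteq S$, so nonempty intersection of branch sets gives compatibility), but note that in the Axiom~A$^*$ case merely landing in some $S\in E$ with $S\perp T_1$ does not yet contradict $x\in[T_1]$, since $[S]\cap[T_1]$ can be a nonempty set containing no condition; your ``extra bookkeeping'' would have to ensure outright that $[T_{\alpha+1}]$ avoids such intersections, which requires an additional argument that these sets are $\IP$-null (true for the concrete forcings in the paper, but not obviously for an abstract Axiom~A$^*$ forcing).
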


\begin{proof} First assume that $\IP$ has the $\kappa^+$-c.c. Let $M$ be an elementary submodel with $|M| = \kappa$. 

\s{Claim} \emph{A real $x$ is $\IP$-generic over $M$ if and only if $x \notin B$ for every Borel $\IP$-null set $B$ coded in $M$. }

\begin{proof} Suppose $x$ is $\IP$-generic over $M$, and let $B$ be a $\IP$-null set coded in $M$. 
Then by elementarity $M \models $ ``$B$ is $\IP$-null'', and 
$D := \{S \in \IP \cap M \mid [S] \cap B = \varnothing\}$ is in $M$ and $M \models $ ``$D$ is dense''. Since $x$ is $\IP$-generic, there exists $S \in D$ such that $x \in [S]$, and therefore, $x \notin B$.

\p Conversely, suppose $x \notin B$ for every Borel $\IP$-null set coded in $M$. Let $D \subseteq \IP$ be a dense set in $M$, and let $A$ be a maximal antichain inside $D$. Let $B := \kk \setminus \bigcup \{[S] \mid S \in (A \cap M)\}$ which is a Borel set since $|A| = \kappa$ and has a code in $M$. Moreover $B \in \N_\IP$ since $A$ is a maximal antichain. Therefore, by assumption, $x \notin B$, and hence $x \in [S]$ for some $S	 \in A \cap M$, i.e., $x$ is $\IP$-generic over $M$. \renewcommand{\qed}{\hfill $\Box$ (Claim).} \end{proof}

\p Now it is easy to see that $X := \bigcup\{B \mid B$ is a Borel set in $\N_\IP$ with code in $M\}$ is a $\kappa$-union of $\IP$-null sets, hence it is itself in $\I_\IP$. In particular, there exists $T' \leq T$ such that $[T'] \subseteq^* \{x \mid x $ is $\IP$-generic over $M\} = \kk \setminus  X$.

\medskip \p
Next, assume instead that $\IP$ satisfies Axiom A$^*$. Let $\{D_i \mid i<\kappa\}$ enumerate the dense sets in $M$, and let $T \in \IP \cap M$. As usual, we can apply Axiom A$^*$ to inductively find a fusion sequence $\{T_i \mid i<\kappa\}$ and a sequence $\{E_i \subseteq D_i \mid i< \kappa\}$ such that each $E_i \in M$ and $|E_i| \leq \kappa$, and hence $E_i \subseteq M$, and moreover $[T_i] \subseteq \bigcup\{[S] \mid S \in E_i\}$. Let $T'$ be such that $T' \leq T_i$ for all $i$. Then for every $i$, $[T'] \subseteq \bigcup\{[S] \mid S \in E_i\}$, so, in particular, every $x$ in $[T']$ is $\IP$-generic over $M$, so we are done. \end{proof}

Using this strengthening of $\kappa$-properness, we are almost in a position to prove that a $\kappa^+$-iteration of $\IP$ satisfying either the $\kappa^+$-c.c. or Axiom A$^*$ yields a model of for $\DELTA^1_1(\IP)$. However, we still have an obstacle, and that is the lack of an abstract preservation theorem for $\kappa$-properness, mentioned in Section \ref{2Forcing}. This obstacle makes it impossible to prove the next theorem in an abstract setting including the non-$\kappa^+$-c.c. cases. We could formulate it under the assumption that $\kappa$-properness is preserved; but in fact we only need one consequence of $\kappa$-properness, namely, that all new $\kappa$-reals appear at some initial stage of the iteration (which in particular implies $\kappa^+$-preservation). 

\begin{Thm} \label{theorem} Let $\IP$ be a ${<}\kappa$-closed, $\kappa$-tree-like forcing. \begin{enumerate}
\item Suppose $\IP$ satisfies the $\kappa^+$-c.c., and let $\IP_{\kappa^+}$ be the $\kappa^+$-iteration of $\IP$ with supports of size ${<}\kappa$. Then $V^{\IP_{\kappa^+}} \models \DELTA^1_1(\IP)$. 
\item  Suppose $\IP$ satisfies Axiom A$^*$, and let $\IP_{\kappa^+}$ be the $\kappa^+$-iteration of $\IP$ with supports of size ${\leq} \kappa$. Moreover, assume that for every $x \in \kk \cap V^{\IP_{\kappa^+}}$, there is $\alpha < \kappa^+$ such that $x \in \kk \cap V^{\IP_\alpha}$. Then $V^{\IP_{\kappa^+}} \models \DELTA^1_1(\IP)$. \end{enumerate} \end{Thm}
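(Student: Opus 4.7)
The plan is to follow the classical Judah-Shelah template (Theorem~\ref{judah}), adapted to the generalized setting: Lemma~\ref{lemmichka} will play the role of ``Cohen reals over $L[r]$'', and Lemma~\ref{ClosedAbsoluteness} the role of Shoenfield absoluteness. Given a $\DELTA^1_1$ set $A \in V^{\IP_{\kappa^+}}$ with parameter $r$, defined simultaneously by a $\SIGMA^1_1$ formula $\phi(x,r)$ and by the negation of a $\SIGMA^1_1$ formula $\psi(x,r)$, and given $T \in \IP$, I want to produce $T' \leq T$ with $[T'] \subseteq^* A$ or $[T'] \cap A =^* \varnothing$. In Part~1, the usual nice-names counting, using the $\kappa^+$-c.c.\ of the iteration together with ${<}\kappa$-sized supports and $\kappa^{<\kappa} = \kappa$, gives $r, T \in V^{\IP_\alpha}$ for some $\alpha < \kappa^+$; in Part~2 this is hypothesized. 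I then work inside $V^{\IP_\alpha}$, picking $M \prec H_\theta$ of size $\kappa$ closed under ${<}\kappa$-sequences with $r, T, \IP \in M$, and set $G_M := \{x \in \kk \mid x \text{ is } \IP\text{-generic over } M\}$.

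The main step will be a case split inside $M$. If there exists $S \leq T$ in $\IP \cap M$ with $S \Vdash^M \phi(\dot{x}_G, r)$, then Lemma~\ref{lemmichka} produces $T' \leq S$ with $[T'] \subseteq^* G_M$, and for any $x \in [T'] \cap G_M$ the forcing theorem gives $M[x] \models \phi(x,r)$; upward $\SIGMA^1_1$-absoluteness then yields $V^{\IP_{\kappa^+}} \models \phi(x,r)$, so $x \in A$, and hence $[T'] \subseteq^* A$. The symmetric case where some $S \leq T$ in $M$ forces $\psi(\dot{x}_G, r)$ yields $[T'] \cap A =^* \varnothing$ by the same reasoning. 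Hence everything reduces to a key lemma: inside $V^{\IP_\alpha}$ one has $\mathbf{1} \Vdash_\IP (\phi(\dot{x}_G, r) \vee \psi(\dot{x}_G, r))$. By elementarity of $M$ this lifts to $M$, and then the set of $S \leq T$ in $M$ forcing one of the two disjuncts is dense below $T$, giving a witness for one of the two subcases.

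To prove the key lemma, I will argue by contradiction: suppose some $S \in \IP$ in $V^{\IP_\alpha}$ forces $\lnot\phi \wedge \lnot\psi$. Because the iteration is essentially a ${<}\kappa$-supported (or ${\leq}\kappa$-supported) product of copies of the same $\IP$, the set of $p \in \IP_{\kappa^+}$ with $p(\beta) \leq S$ at some $\beta \in [\alpha, \kappa^+)$ lying outside $\text{supp}(p)$ is dense in $\IP_{\kappa^+}$. By genericity there is such a $\beta$, and the corresponding stage-$\beta$ generic $x_\beta$ lies in $[S]$; the forcing theorem applied in $V^{\IP_\alpha}$ then gives $V^{\IP_\alpha}[x_\beta] \models \lnot\phi(x_\beta, r) \wedge \lnot\psi(x_\beta, r)$. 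Using the product structure I will factor $V^{\IP_{\kappa^+}}$ as a generic extension of $V^{\IP_\alpha}[x_\beta]$ by the product of the remaining $\IP$-factors at coordinates in $[\alpha, \kappa^+) \setminus \{\beta\}$, which is still ${<}\kappa$-closed, so Lemma~\ref{ClosedAbsoluteness} transfers both $\SIGMA^1_1$ truths up to $V^{\IP_{\kappa^+}}$. This contradicts the $\DELTA^1_1$-ness of $A$ there.

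The main obstacle is precisely this factorization step, namely realizing $V^{\IP_{\kappa^+}}$ as a ${<}\kappa$-closed extension of $V^{\IP_\alpha}[x_\beta]$ via mutual genericity of the factors. In Part~1 the ${<}\kappa$-support product presentation together with the generalized product lemma makes this routine; in Part~2 one relies on the ${\leq}\kappa$-support structure combined with Axiom~A$^*$-style fusion for closure of the ``rest'' of the iteration and, crucially, on the hypothesis that every new $\kappa$-real appears at some initial stage, which substitutes here for a general preservation theorem for $\kappa$-properness of the kind discussed in Section~\ref{2Forcing}.
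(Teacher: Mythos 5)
Your overall architecture (locate a stage containing the parameters, pass to an elementary submodel, use Lemma~\ref{lemmichka} to cover a condition by $M$-generics, and finish with upward $\Sigma^1_1$-absoluteness) matches the paper's endgame. But the route you take to get a condition in $M$ deciding $\phi$ versus $\psi$ — the ``key lemma'' that $\mathbf{1} \Vdash_\IP (\phi(\dot{x}_G,r) \vee \psi(\dot{x}_G,r))$ holds in $V^{\IP_\alpha}$ — has a genuine gap, precisely at the step you yourself flag as the main obstacle. You treat $\IP_{\kappa^+}$ as a product of copies of a single fixed poset, but it is an iteration: the stage-$\beta$ iterand is $\IP$ \emph{as computed in} $V^{\IP_\beta}$, which contains trees not present in $V^{\IP_\alpha}$. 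Consequently (i) the stage-$\beta$ generic $x_\beta$ is $\IP^{V^{\IP_\beta}}$-generic over $V^{\IP_\beta}$, and there is no reason why the induced filter on $\IP^{V^{\IP_\alpha}}$ should be generic over $V^{\IP_\alpha}$ — a dense subset of $\IP^{V^{\IP_\alpha}}$ lying in $V^{\IP_\alpha}$ need not remain predense in the larger poset $\IP^{V^{\IP_\beta}}$ — so the forcing theorem ``applied in $V^{\IP_\alpha}$'' does not yield $V^{\IP_\alpha}[x_\beta] \models \lnot\phi \wedge \lnot\psi$; and (ii) $V^{\IP_\alpha}[x_\beta]$ is not a canonical intermediate model of the iteration, so there is no mutual-genericity factorization exhibiting $V^{\IP_{\kappa^+}}$ as a ${<}\kappa$-closed extension of it, which is what your appeal to Lemma~\ref{ClosedAbsoluteness} requires. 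Neither the ``generalized product lemma'' in Part~1 nor Axiom~A$^*$ fusion in Part~2 repairs this, because the problem is the iteration structure itself, not the support size.

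The paper avoids all of this by never insisting on working over $V[G_\alpha]$ and never proving your key lemma. Having found $\beta > \alpha$ such that the given condition $S$ lies in the component $G(\beta+1)$ of the generic (a density argument, as in your proposal), it takes the actual generic real $x_{\beta+1} \in [S]$, observes that $\phi(x_{\beta+1})$ or $\psi(x_{\beta+1})$ holds in $V[G_{\kappa^+}]$, and transfers that single $\Sigma^1_1$ fact \emph{down} to the canonical intermediate model $V[G_{\beta+1}] = V[G_\beta][x_{\beta+1}]$ via Lemma~\ref{ClosedAbsoluteness} (the tail of the iteration \emph{is} ${<}\kappa$-closed, unlike your quotient). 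The forcing theorem is then applied over $V[G_\beta]$ — not $V[G_\alpha]$ — to produce $T \leq S$ in $V[G_\beta]$ forcing, say, $\phi(\dot{x}_{\gen})$, and the elementary submodel $M$ is chosen inside $V[G_\beta]$ containing this $T$. From there Lemma~\ref{lemmichka} and upward absoluteness finish the proof exactly as in your final step. So the fix is to let the submodel live at stage $\beta$ rather than stage $\alpha$, which makes the full-decidability claim over $V^{\IP_\alpha}$ unnecessary.
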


\begin{proof} The proof works uniformly for both cases. In $V[G_{\kappa^+}]$, let $A$ be $\DELTA^1_1$, defined by $\Sigma^1_1$-formulas $\phi$ and $\psi$. Let $S \in \IP$ be arbitrary. By the assumption, there exists an $\alpha < \kappa^+$ such that all parameters of $\phi$ and $\psi$, as well as $S$, belong to $V[G_{\alpha}]$. Moreover, there is a $\beta > \alpha$ such that $S$ belongs to $G(\beta+1)$ (the $(\beta+1)$-st component of the generic filter), since it is dense to force this for some $\beta > \alpha$. Let $x_{\beta +1}$ be the  real corresponding to $G(\beta+1)$, i.e., the next $\IP$-generic real over $V[G_{\beta}]$. 

\p We know that in the final model $V[G_{\kappa^+}]$, either $\phi(x_{\beta +1})$ or $\psi(x_{\beta +1})$ holds. As $\phi$ and $\psi$ are both $\Sigma^1_1$ the situation is clearly symmetrical so without loss of generality assume the former. Since $\IP$ is ${<}\kappa$-closed, any iteration of it is also ${<}\kappa$-closed, so by Lemma \ref{ClosedAbsoluteness} we have $\Sigma^1_1$-absoluteness between $V[G_{\kappa^+}]$ and $V[G_{\beta+1}]$. In particular,  $V[G_{\beta +1}] = V[G_{\beta}][x_{\beta+1}] \models \phi(x_{\beta +1})$.  By the forcing theorem, and since we have assumed $S \in G(\beta+1)$, there exists a $T \in V[G_\beta]$  such that $T \leq S$ and $T \Vdash_\IP \phi(\dot{x}_\gen)$. 

\p Now, still in $V[G_\beta]$, take an elementary submodel $M$ of a sufficiently large structure, of size $\kappa$, containing $T$. By elementarity, $M \models $ ``$T \Vdash_\IP \phi(\dot{x}_\gen)$''. Going back to $V[G_{\kappa^+}]$, use Lemma \ref{lemmichka} to find a $T' \leq T$  such that $[T'] \subseteq^* \{x \mid x$ is $\IP$-generic over $M\}$. Now note that if $x$ is $\IP$-generic over $M$ and $x \in [T]$, then $M[x] \models \phi(x)$. By upwards-$\Sigma^1_1$-absoluteness  between $M$ and $V[G_{\kappa^+}]$, we conclude that $\phi(x)$ really holds. Since this was true for arbitrary $x \in [T']$, we obtain $[T'] \subseteq^* \{x \mid \phi(x)\} = A$. 
\end{proof}

The above theorem can be applied to many forcing partial orders $\IP$, in particular those from Example \ref{ForcingExamples}.

\begin{Cor}  $\DELTA^1_1(\IP)$ is consistent for $\IP \in \{\IC_\kappa, \IS_\kappa, \IM_\kappa, \IL_\kappa, \IR_\kappa\}$, and if $\kappa$ is inaccessible, also for $\IP =\IV_\kappa$.  \end{Cor}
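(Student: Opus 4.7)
The plan is to verify, for each forcing $\IP$ in the list, that it falls under one of the two hypotheses of Theorem~\ref{theorem}, and then to invoke Theorem~\ref{theorem} directly. All six forcings are ${<}\kappa$-closed and $\kappa$-tree-like by construction, so what remains is either a $\kappa^+$-c.c.\ check (for part (1)) or an Axiom~A$^*$ check together with the ``new $\kappa$-reals appear at bounded stages'' condition (for part (2)).

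For $\IC_\kappa$, $\IL_\kappa$, and $\IR_\kappa$, I would apply Theorem~\ref{theorem}(1). The $\kappa^+$-c.c.\ for $\IC_\kappa$ follows from the fact that there are only $\klk = \kappa$ basic conditions, while the $\kappa^+$-c.c.\ for $\IL_\kappa$ and $\IR_\kappa$ was already noted in the discussion after Example~\ref{ForcingExamples}: any two conditions with the same stem are compatible because club-splitting above the stem produces a shared witnessing club ($\kappa^+$-centeredness). Hence the $\kappa^+$-iteration with ${<}\kappa$-sized supports forces $\DELTA^1_1(\IP)$ by Theorem~\ref{theorem}(1).

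For $\IS_\kappa$, $\IM_\kappa$, and (when $\kappa$ is inaccessible) $\IV_\kappa$, I would apply Theorem~\ref{theorem}(2). The verification of Axiom~A$^*$ for these three partial orders was indicated in the remarks following Definition~\ref{AxiomA}: the standard fusion construction of Kanamori for $\IS_\kappa$ and of Friedman--Zdomskyy for $\IM_\kappa$ can be carried out so that at each fusion step one covers a condition by a set of at most $\kappa$ stronger conditions \emph{whose bodies cover $[S]$}, and the Silver case is an analogous fusion under inaccessibility. The remaining ingredient is to check that in the ${\leq}\kappa$-support iteration $\IP_{\kappa^+}$, every $\kappa$-real in the extension already belongs to some intermediate model $V[G_\alpha]$ with $\alpha<\kappa^+$. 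This is precisely the content of the iterated Axiom~A theorems cited in Fact~\ref{FactusCactus}: a fusion sequence pins down any $\IP_{\kappa^+}$-name for an element of $\kk$ using only $\kappa$-many coordinates of the iteration, and each coordinate has bounded support, so the name is realized in $V[G_\alpha]$ for some $\alpha<\kappa^+$ (one uses here that $\kappa^+$ has cofinality greater than $\kappa$).

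The main obstacle is this last ``bounded-stage'' verification, since no general preservation theorem for $\kappa$-properness is available (cf.\ Section~\ref{2Forcing}); it must be read off from the concrete fusion constructions in \cite{KanamoriSacks} and \cite{FriedmanZdomskyyMiller} for $\IS_\kappa$ and $\IM_\kappa$, and from a direct adaptation of the classical Silver argument for $\IV_\kappa$ under inaccessibility. Once this is in hand, Theorem~\ref{theorem}(2) delivers $\DELTA^1_1(\IP)$ in the iterated extension, completing the corollary.
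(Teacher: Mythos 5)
Your treatment of $\IC_\kappa$, $\IL_\kappa$, $\IR_\kappa$ (via the $\kappa^+$-c.c.\ and Theorem~\ref{theorem}(1)), of $\IS_\kappa$ (via Kanamori's iterated Axiom~A, which requires $\Diamond_\kappa$ in the ground model, so one should start from $L$), and of $\IV_\kappa$ under inaccessibility is exactly what the paper does. There is, however, a genuine gap in your handling of $\IM_\kappa$: the iterated Axiom~A theorem of Friedman--Zdomskyy that you propose to ``read off'' the bounded-stage property from (Fact~\ref{FactusCactus}(2)(b)) is only available for \emph{inaccessible} $\kappa$, whereas the corollary asserts consistency of $\DELTA^1_1(\IM_\kappa)$ for every regular $\kappa$ with $\kappa^{<\kappa}=\kappa$. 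For successor $\kappa$ no preservation theorem for the ${\leq}\kappa$-support iteration of $\IM_\kappa$ is on record, so your invocation of Theorem~\ref{theorem}(2) is unjustified in that case; this is precisely the ``potentially very technical'' verification the paper explicitly declines to carry out.

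The paper closes this gap by a different and much cheaper route: it derives $\DELTA^1_1(\IM_\kappa)$ from $\DELTA^1_1(\IC_\kappa)$ using the ZFC implication $\G(\IC_\kappa)\Rightarrow\G(\IM_\kappa)$ of Lemma~\ref{Implications}(3), so the $\kappa^+$-c.c.\ Cohen iteration model already witnesses the Miller case for arbitrary $\kappa$. You should either adopt that reduction for $\IM_\kappa$, or restrict your direct iteration argument for $\IM_\kappa$ to inaccessible $\kappa$ and supply the successor case separately.
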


\begin{proof} Clearly all forcings are ${<}\kappa$-closed. For  $\IC_\kappa, \IL_\kappa$ and $\IR_\kappa$ there are no problems since these forcings have the $\kappa^+$-c.c. By 
Fact \ref{FactusCactus} (1), iterations of $\IS_\kappa$ with ${\leq}\kappa$-sized supports satisfy $\kappa$-properness assuming that $\Diamond_\kappa$ holds in the ground model, so $\DELTA^1_1(\IS_\kappa)$ holds in $L^{\IS_{\kappa^+}}$. By Fact \ref{FactusCactus} (2), iterations of $\IM_\kappa$ with ${\leq}\kappa$-sized supports satisfy $\kappa$-properness for inaccessible $\kappa$. It seems very plausible that by an analogous argument to \cite{KanamoriSacks}, the same holds for arbitrary $\kappa$ assuming $\Diamond_\kappa$. However, we will leave out the verification of this (potentially very technical) proof because $\DELTA^1_1(\IM_\kappa)$ also follows by a much easier argument, namely Theorem \ref{Implications} (3). Finally, if $\kappa$ is inaccessible then a straightforward modification of \cite[Theorem 6.1]{KanamoriSacks} shows that iterations of $\kappa$-Silver with ${\leq}\kappa$-sized supports satisfies $\kappa$-properness (the only change in the argument involves the definition of the fusion sequence \cite[Definition 1.7]{KanamoriSacks} and the  amalgamation defined in \cite[Page 103]{KanamoriSacks}). We leave the details to the reader. \end{proof}

\begin{Remark} \label{RemarkMixing} It is clear that in Theorem \ref{theorem} it is enough to add $\IP$-generic reals cofinally often, provided that the iteration is ${<}\kappa$-closed and satisfies the other requirements. For example, we can obtain $\DELTA^1_1(\IC_\kappa) + \DELTA^1_1(\IL_\kappa) + \DELTA^1_1(\IR_\kappa)$ simultaneously by employing a $\kappa^+$-iteration of $(\IC_\kappa * \IL_\kappa * \IR_\kappa)$ with supports of size ${<}\kappa$. \end{Remark}

\bigskip  Recall that in the classical setting we had Solovay-style characterization theorems for $\DELTA^1_2$ sets, such as Theorem \ref{judah} and related results  (see \cite{BrLo99, Ik10}).  In light of Theorem \ref{theorem}, one might expect that in the generalized setting, analogous characterization theorems exist for statements concerning $\DELTA^1_1$ sets. However, the following observation shows that this is not the case.

\begin{Observation} \label{cannot} Suppose $\kappa$ is successor. There exists a generic extension of $L$ in which the statement ``$\forall r \in \dk\: \exists x  \:(x$ is $\kappa$-Cohen over $L[r])$'' holds, yet there exists a $\DELTA^1_1$ subset of $\dk$ without the Baire property. \end{Observation}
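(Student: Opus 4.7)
The natural candidate for a $\DELTA^1_1$ set without the Baire property is the club filter $C$: by Halko--Shelah (Theorem \ref{HalkoShelah}) it always fails the Baire property, and by Theorem \ref{ClubDelta} it can be made $\DELTA^1_1$ via forcing over $L$. So the plan is to take the Mekler--Shelah / Friedman--Wu--Zdomskyy forcing $\mathbb{MS} \in L$ witnessing Theorem \ref{ClubDelta}, let $H$ be $\mathbb{MS}$-generic over $L$, and set $N := L[H]$. This immediately furnishes clause (b) of the observation.

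It remains to verify clause (a): that in $N$ every $r \in \dk$ admits a $\kappa$-Cohen-generic real over $L[r]$. The canary-tree iteration $\mathbb{MS}$ from \cite{CanaryTree, ClubDelta} is a $\kappa^+$-length iteration, $<\kappa$-closed and $\kappa^+$-c.c.\ (with supports of size $<\kappa$). Inspection of the construction shows that at each successor stage, the generic factors through a $\kappa$-Cohen-generic real over the preceding intermediate model $L[H\upharpoonright\alpha]$. By the $\kappa^+$-c.c.\ of $\mathbb{MS}$, any $r \in N$ lies in some $L[H\upharpoonright\alpha]$ with $\alpha < \kappa^+$, and the Cohen factor of stage $\alpha+1$ then gives the required $x \in N$ that is $\IC_\kappa$-generic over $L[H\upharpoonright\alpha] \supseteq L[r]$.

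The most delicate point is that we cannot naively achieve (a) by post-iterating $\IC_\kappa$ over $L[H]$: doing so would, by Theorem \ref{theorem}(1), force $\DELTA^1_1(\IC_\kappa)$ to hold in the resulting extension, which together with the $\DELTA^1_1$-ness of $C$ would imply (via Theorem \ref{HalkoShelah}) that $C$ has the Baire property---a contradiction. Hence the Cohen reals witnessing (a) must be produced \emph{within} the $\mathbb{MS}$-iteration, not tacked on afterwards. Verifying that the canary-tree forcing from \cite{CanaryTree, ClubDelta} has the required Cohen-factorization property, while preserving the $\PI^1_1$ witness (the canary tree) for the club filter, is the main technical obstacle. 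Once this is in place, the size mismatch between $L[H\upharpoonright\alpha]$ (of cardinality $\kappa^+$ in $N$) and the elementary submodels $M$ of size $\kappa$ used in Theorem \ref{theorem}'s proof prevents that argument from promoting (a) to full $\DELTA^1_1(\IC_\kappa)$: the meager sets coded in $L[H\upharpoonright\alpha]$ form a $\kappa^+$-union, not a $\kappa$-union, and therefore need not be $\IC_\kappa$-meager in $N$.
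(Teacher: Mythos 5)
Your proposal is correct and takes essentially the same approach as the paper's own (sketch) proof: both obtain the model from the Friedman--Wu--Zdomskyy iteration that makes the club filter $\DELTA^1_1$ (hence without the Baire property, by Halko--Shelah), and both verify the Cohen clause by noting that $\kappa$-Cohen reals are added cofinally along that iteration --- the paper even remarks that they ``are added naturally in the original proof'' --- so that every $r$ is caught at an intermediate stage. Your closing discussion of why this cannot be upgraded to full $\DELTA^1_1(\IC_\kappa)$ is a sensible supplement but not part of the paper's argument.
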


\begin{proof} Recall that by Theorem \ref{ClubDelta}, it is consistent for the club filter  $C$ (Definition \ref{ClubFilter})  to be $\DELTA^1_1$-definable. The idea is to adapt the proof of  \cite[Theorem 1.1]{ClubDelta} due to Friedman, Wu and Zdomskyy. Since that  proof  is long and technical, we cannot afford to go into details here, so we only provide a sketch of the argument and leave the details to the reader. In that proof, a model where $C$ is $\DELTA^1_1$ is obtained by a forcing iteration, starting from $L$, in which cofinally many iterands have the $\kappa^+$-c.c. One can then verify that the proof remains correct if, additionally, $\kappa$-Cohen reals are added cofinally often to this iteration (in fact, $\kappa$-Cohen reals are added naturally in the original proof). Thus we obtain a model in which the club filter is $\DELTA^1_1$ and hence fails to have the Baire property, while clearly the statement ``$\forall r \in \dk\: \exists x  \:(x$ is $\kappa$-Cohen over $L[r])$'' is true. \end{proof}

A similar argument can be applied to any  $\kappa$-tree-like forcing $\IP$ which satisfies the $\kappa^+$-c.c., provided it also satisfies Theorem \ref{ClubCounterexample} (i.e., whose trees are $\kappa$-Sacks or $\kappa$-Miller trees). 

\section{Regularity Properties for $\DELTA^1_1$ sets} \label{4}


In the classical setting, regularity properties related to well-known forcing notions on $\ww$ or $\dw$ have been investigated, and the exact relationship between statements $\DELTA^1_2(\IP)$ and $\SIGMA^1_2(\IP)$ has been studied for various forcing notions $\IP$.   As we saw in the previous section,  for generalized reals the $\DELTA^1_1$-level reflects some of these results. We will focus on the forcing notions from Example \ref{ForcingExamples}, i.e., $\kappa$-Cohen, $\kappa$-Sacks, $\kappa$-Miller, $\kappa$-Laver, $\kappa$-Mathias and $\kappa$-Silver. 

Before proceeding, we make a further comment regarding $\kappa$-Laver and $\kappa$-Mathias, showing that the ideal $\I_{\IL_\kappa}$ of $\IL_\kappa$-meager sets and the ideal $\I_{\IR_\kappa}$ of $\IR_\kappa$-meager sets cannot be neglected when discussing the regularity property generated by them. 

\begin{Lem} \label{nonideal} The ideal $\N_{\IL_\kappa}$ of $\IL_\kappa$-null sets is not equal to the ideal $\I_{\IL_\kappa}$ of $\IL_\kappa$-meager sets. Also, there is an $F_\sigma$ set $A$ such that no $\kappa$-Laver tree is completely contained or completely disjoint from $A$. The same holds for $\IR_\kappa$. \end{Lem}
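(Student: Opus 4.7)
The plan is to exhibit a single $F_\sigma$ set $B$ that witnesses both conclusions simultaneously. Since $\kappa$ is regular uncountable, pick any stationary, co-stationary $U \subseteq \kappa$ and define
$$B := \{x \in \kuk \mid \exists \alpha < \kappa \; \forall \beta \geq \alpha \; x(\beta) \notin U\} = \bigcup_{\alpha < \kappa} F_\alpha,$$
where $F_\alpha := \{x \mid \forall \beta \geq \alpha \; x(\beta) \notin U\}$ is an intersection of clopen sets, hence closed, so $B$ is $F_\sigma$.

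I would then verify three things about $B$, all exploiting the fact that for any club $C \subseteq \kappa$, both $C \cap U$ and $C \setminus U$ are stationary and hence unbounded. First, each $F_\alpha$ is $\IL_\kappa$-null: given $T \in \IL_\kappa$ with stem $s$, use ${<}\kappa$-closure and club-splitting to extend $s$ inside $T$ to some node $t$ of length $\geq \alpha$, then choose a successor $\gamma \in C \cap U$, where $C$ is the club of successors of $t$ in $T$; the Laver tree $T {\thru} (t \cc \langle \gamma \rangle)$ has every branch satisfying $x(|t|) = \gamma \in U$, so $[T {\thru} (t \cc \langle \gamma \rangle)] \cap F_\alpha = \varnothing$. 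This shows $B \in \I_{\IL_\kappa}$. Second, no Laver tree is disjoint from $B$: given $T$ with stem $s$, transfinitely build a branch $x \in [T]$ extending $s$ by choosing, at each level $i \geq |s|$, some $x(i) \in C_i \setminus U$ (nonempty by the observation above), and taking unions at limits; the resulting $x$ belongs to $F_{|s|} \subseteq B$. This proves $B \notin \N_{\IL_\kappa}$, hence $\N_{\IL_\kappa} \subsetneq \I_{\IL_\kappa}$. Third, no Laver tree is contained in $B$: dually, build $x \in [T]$ extending $s$ with $x(i) \in C_i \cap U$ at cofinally many levels $i$; this branch hits $U$ cofinally and so lies in $B^c$.

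The case of $\IR_\kappa$ is handled by the same set $B$ with only cosmetic adjustments. For a $\kappa$-Mathias condition $(s, C)$, every branch of $T_{(s,C)}$ takes its values in $C$ at each level above $|s|$; to kill $F_\alpha$ one lengthens $s$ inside $C$ until one has passed position $\alpha$, then appends a single $\gamma \in C \cap U$ and restricts the side-condition to $C \cap (\gamma, \kappa)$. The branch constructions witnessing $[T_{(s,C)}] \cap B \neq \varnothing$ and $[T_{(s,C)}] \not\subseteq B$ proceed by enumerating strictly increasing sequences from $C \setminus U$ and from $C$ hitting $C \cap U$ cofinally, respectively, both possible since $C \cap U$ and $C \setminus U$ are unbounded.

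The only real subtlety is transfinite bookkeeping: each branch- or node-building construction must proceed through $\kappa$ stages while remaining inside the relevant tree, which is precisely what ${<}\kappa$-closure guarantees; everything else reduces to the basic calculus of clubs and stationary sets versus the ``club-many successors'' requirement in the definition of $\IL_\kappa$ and $\IR_\kappa$ trees.
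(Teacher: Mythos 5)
Your proposal is correct and is essentially the paper's own argument: the paper fixes a stationary, co-stationary $S$ and uses $A_i := \{x \in \kuk \mid \forall j > i\, (x(j) \in S)\}$, which is exactly your $F_\alpha$ after replacing $U$ by its complement, and the three verifications (each piece is null via a single successor landing in the "wrong" set, a branch built through $C_i \setminus U$ witnesses non-nullity, and a branch hitting $C_i \cap U$ cofinally witnesses non-containment) match the paper's, with the Mathias case handled analogously in both. No gaps; your write-up is if anything slightly more detailed than the original.
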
 

\begin{proof}
  Fix a stationary, co-stationary $S \subseteq \kappa$. For each $i <\kappa$ define $A_i := \{x \in \kuk \mid  \forall j > i (x(j) \in S)\}$ and $A = \bigcup_{i<\kappa} A_i$. Then each $A_i$ is $\IL_\kappa$-null, because  any $\kappa$-Laver tree $T$  can be extended to some $T' \leq T$ with stem $s$, such that $|s| > i$ and for some $j>i$ we have $s(j) \notin S$, so that clearly $[T'] \cap A_i = \varnothing$. On the other hand, $A$ itself cannot be $\IL_\kappa$-null, because every $\kappa$-Laver tree $T$ contains a branch $x \in [T]$ such that for all $j$ longer then the stem of $T$ we have $x(j) \in S$, and therefore $x \in A$. It is also clear that the set $A$ is $F_\sigma$ but every $\kappa$-Laver tree $T$ contains a branch $x$ which is in $A$ and another branch $y$ which is not in $A$. The argument for $\kappa$-Mathias is analogous.
\end{proof}

Summarizing, the forcings we have introduced can be  neatly divided into two categories as presented in Table \ref{tablum}.

 \begin{table}[here] \begin{center}
 \begin{tabular}{p{2cm}p{7cm}}
   \hline  
 
  $\kappa$-Cohen  

  $\kappa$-Laver  

  $\kappa$-Mathias  & Category 1: topological, $\kappa^+$-c.c., ideal $\I_\IP$ cannot be neglected; $\IP$-measurability equivalent to Baire property in $\IP$-topology. \\
\hline

  $\kappa$-Sacks  

  $\kappa$-Miller 

  $\kappa$-Silver   & Category 2: non-topological, Axiom A$^*$, $\I_\IP = \N_\IP$ can be neglected. \\ \hline   
 \end{tabular} \end{center} \caption{Properties of forcings.}\label{tablum} \end{table}

\medskip \subsection{Solovay-style characterizations} \label{4Solovay}

By Lemma \ref{cannot}, we  know that a Solovay-style characterization for $\DELTA^1_1(\IP)$ cannot be achieved in the generalized setting. However, in some cases we can obtain one half of such a characterization.

\begin{Lem} $\DELTA^1_1(\IC_\kappa)$ implies that for every $r \in \kk$ there exists a $\kappa$-Cohen real over $L[r]$. \end{Lem}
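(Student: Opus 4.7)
The plan is to prove the contrapositive: assuming some $r\in\kk$ admits no $\kappa$-Cohen real over $L[r]$ in $V$, I will exhibit a $\DELTA^1_1(r)$ subset of $\kk$ that is not $\IC_\kappa$-measurable. Since $\IC_\kappa$ is topological, by Lemma \ref{lemmings}(1) this amounts to producing a $\DELTA^1_1(r)$ set lacking the Baire property.

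The strategy mimics the proof of Lemma \ref{folklore}, but relativized to $L[r]$, with the hypothesis used to bridge the gap between $L[r]$ and $V$. Relativizing Lemma \ref{Wellorder}, fix a $\SIGMA^1_1(r)$-good wellorder $<_*$ of $\kk\cap L[r]$. Inside $L[r]$, enumerate via $<_*$ all Borel codes of $\kappa$-meager subsets of $\kk$ as $\langle c_\alpha\mid \alpha<\lambda\rangle$, where $\lambda := (\kappa^+)^{L[r]}$, and let $B_\alpha$ denote the corresponding meager Borel sets. The crucial observation enabled by the hypothesis is that $\kk=\bigcup_{\alpha<\lambda} B_\alpha$: if some $x\in\kk$ avoided every $B_\alpha$, then $x$ would avoid every $\kappa$-meager Borel set coded in $L[r]$, hence be $\kappa$-Cohen over $L[r]$, contradicting the hypothesis. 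So the function $\mu(x):=\min\{\alpha<\lambda\mid x\in B_\alpha\}$ is total on $\kk$.

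Fix a $\SIGMA^1_1$-definable partition $\lambda = E_0\sqcup E_1$ into two cofinal pieces and define
$$A := \{x\in\kk\mid \mu(x)\in E_0\}.$$
Using $\SIGMA^1_1$-goodness of $<_*$, both $A$ and its complement $A^c = \{x\mid \mu(x)\in E_1\}$ are $\SIGMA^1_1(r)$:
$$x\in A \iff \exists \alpha\in E_0 \bigl(x\in B_\alpha \wedge \forall \beta <_* \alpha\;\; x\notin B_\beta\bigr),$$
and symmetrically for $A^c$. Hence $A\in\DELTA^1_1(r)$.

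The main obstacle is to arrange the construction so that $A$ fails the Baire property in $V$, i.e., for every basic open $[s]\subseteq\kk$ both $A\cap[s]$ and $A^c\cap[s]$ are non-meager. The plan is to carry out the enumeration $\langle c_\alpha\rangle$ inside $L[r]$ via a Bernstein-style diagonalization: at each stage $\alpha<\lambda$ one interleaves codes witnessing that both the $E_0$- and $E_1$-labelled parts contribute non-meagerly in every basic open, cycling through the $\kappa$-many basic opens cofinally often. The delicate point is that meagerness in $V$ need not reduce to meagerness coded in $L[r]$, so transferring the Bernstein property from $L[r]$ to $V$ requires exploiting the hypothesis that $\kk=\bigcup_\alpha B_\alpha$ is covered by $L[r]$-coded meager sets, together with an absoluteness argument using the ${<}\kappa$-closure of $\IC_\kappa$ (Lemma \ref{ClosedAbsoluteness}). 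I expect this transfer step to be the most technical part of the argument.
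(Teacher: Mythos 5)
Your reduction to a $\DELTA^1_1(r)$ set without the Baire property, the covering observation $\kk=\bigcup_{\alpha<\lambda}B_\alpha$, the definition of $\mu$, and the complexity computation via the $\SIGMA^1_1$-good wellorder all match the intended argument. But the step you yourself flag as the main obstacle is a genuine gap, and the proposed fix cannot close it. For every basic open $[s]$, each of $A\cap[s]$ and $A^c\cap[s]$ is a union of $\lambda$-many pieces $[s]\cap\bigl(B_\alpha\setminus\bigcup_{\beta<\alpha}B_\beta\bigr)$, and every such piece is meager (it sits inside the meager set $B_\alpha$). So no interleaving of the enumeration carried out inside $L[r]$ can certify that both unions are non-meager in $V$: whether a $\kappa^+$-union of meager sets is meager is decided by Borel sets of $V$ that the construction in $L[r]$ cannot see, and the hypothesis ``no $\kappa$-Cohen real over $L[r]$'' gives no purchase on a $V$-coded meager set witnessing, say, that $A$ is comeager in some $[s]$. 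This is precisely why the one-dimensional parity trick works for domination-style characterizations (where largeness of each piece is witnessed by individual ground-model reals, as in Lemma \ref{Miller}) but not here.

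The paper's proof (following the classical Judah--Shelah argument it cites) closes exactly this gap by passing to a two-dimensional set and invoking Kuratowski--Ulam, which it explicitly names as the central ingredient. One first disposes of the case $(\kappa^+)^{L[r]}<\kappa^+$: then the meager Borel sets coded in $L[r]$ form a $\kappa$-sized family whose union is meager, so a $\kappa$-Cohen real over $L[r]$ exists outright. Hence one may assume $\lambda=(\kappa^+)^{L[r]}=\kappa^+$, so every $\mu(x)$ is an ordinal of cardinality at most $\kappa$. Setting $A:=\{(x,y)\mid\mu(x)\le\mu(y)\}$, which is $\DELTA^1_1(r)$ by the same goodness computation you use (both $\mu(x)\le\mu(y)$ and $\mu(y)<\mu(x)$ are $\SIGMA^1_1(r)$), every vertical section $A_x=\kk\setminus\bigcup_{\beta<\mu(x)}B_\beta$ and every horizontal section of the complement are complements of ${\le}\kappa$-unions of meager sets, hence comeager. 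If $A$ had the Baire property, Kuratowski--Ulam (valid on $\kk$, as recorded in Section~\ref{2}) would make both $A$ and its complement comeager, which is absurd. You should replace the Bernstein-style plan by this Fubini argument; as written, the proposal does not constitute a proof.
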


\begin{proof} The proof is completely analogous to the classical case, see e.g. \cite[Theorem 9.2.1]{BaJu95}, except that we obtain a $\DELTA^1_1$-counterexample as opposed to a $\DELTA^1_2$ one, using the $\SIGMA^1_1$-good wellorder of $L$ (Lemma \ref{Wellorder}). A central ingredient of the classical proof is the Kuratowski-Ulam (Fubini for Category) theorem, which, as we mentioned, is valid on the generalized Baire space. A detailed argument has also been worked out in the PhD Thesis of Laguzzi, see \cite[Theorem 75]{LaguzziThesis}. \end{proof}

\begin{Lem} $\DELTA^1_1(\IS_\kappa)$ implies that for every $r \in \kk$ there is an $x \in \dk \setminus L[r]$. \end{Lem}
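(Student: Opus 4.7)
The plan is to prove the contrapositive: assume there exists $r \in \kk$ with $\dk \subseteq L[r]$, and construct a $\DELTA^1_1(r)$ set $A \subseteq \dk$ that is not $\IS_\kappa$-measurable. Since $L[r]$ satisfies $2^\kappa = \kappa^+$ (because every subset of $\kappa$ in $L[r]$ appears in some $L_\alpha[r]$ with $\alpha < (\kappa^+)^{L[r]}$), and $\dk \subseteq L[r]$, we have $|\dk|=\kappa^+$ in $V$, and by Lemma~\ref{Wellorder} relativised to $L[r]$ there is a $\SIGMA^1_1(r)$-good wellorder $<_r$ of $\dk$ of order type $\kappa^+$. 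The collection of $\kappa$-Sacks trees is likewise of size $\kappa^+$, and each $\kappa$-Sacks tree has $2^\kappa = \kappa^+$ branches, since splitting nodes occur on a club along every branch with at least two immediate successors.

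I would then carry out a Bernstein-style transfinite recursion of length $\kappa^+$. Enumerate the $\kappa$-Sacks trees as $\langle T_\alpha \mid \alpha < \kappa^+ \rangle$ and the reals as $\langle z_\alpha \mid \alpha < \kappa^+\rangle$ via $<_r$. At stage $\alpha$, let $a_\alpha$ and $b_\alpha$ be the two $<_r$-least members of $[T_\alpha] \setminus \{a_\beta, b_\beta \mid \beta < \alpha\}$; assign $a_\alpha$ to $A$ and $b_\alpha$ to $\dk \setminus A$. If additionally $z_\alpha \notin \{a_\beta, b_\beta \mid \beta \leq \alpha\}$, then also assign $z_\alpha$ to $A$. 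The picks exist at every stage because $|[T_\alpha]| = \kappa^+$ strictly exceeds the cardinality of $\{a_\beta, b_\beta \mid \beta < \alpha\}$, and after $\kappa^+$ stages every real has been explicitly assigned either to $A$ or to $\dk \setminus A$.

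For non-measurability: as observed in the excerpt, $\IS_\kappa$ satisfies Axiom~A$^*$, so by Lemma~\ref{lemmings}(2), $\IS_\kappa$-measurability of $A$ would require some $S \in \IS_\kappa$ with $[S] \subseteq A$ or $[S] \cap A = \varnothing$. But any such $S$ is itself a $\kappa$-Sacks tree, hence equals $T_\beta$ for some $\beta$, and by construction $a_\beta \in [S] \cap A$ while $b_\beta \in [S] \setminus A$, ruling out both alternatives.

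The main obstacle is showing that $A \in \DELTA^1_1(r)$ rather than merely $\SIGMA^1_1(r)$. Using $\SIGMA^1_1(r)$-goodness of $<_r$, the initial segment of the construction through any stage $\alpha < \kappa^+$ is coded by a single element of $\kk$ describing the $<_r$-predecessors of the objects in play, and the statement ``this code represents a correct initial segment in which $x$ is placed into $A$'' is arithmetic in the code. Existentially quantifying over the code yields a $\SIGMA^1_1(r)$ description of $A$. A naive Bernstein construction would leave ``$x \notin A$'' only $\PI^1_1(r)$, since non-membership would be witnessed solely by the absence of $x$ from the sequence $\langle a_\alpha \rangle$; it is precisely the extra step of explicitly classifying every $z_\alpha$ at stage $\alpha$ that makes non-membership existentially witnessable as well, yielding a symmetric $\SIGMA^1_1(r)$ description of $\dk \setminus A$, and hence the $\DELTA^1_1(r)$ counterexample required to contradict $\DELTA^1_1(\IS_\kappa)$.
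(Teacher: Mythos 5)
Your proposal is correct and follows essentially the same route as the paper, which simply relativizes the folklore Lemma \ref{folklore} to $L[r]$: assuming $\dk \subseteq L[r]$, use the $\SIGMA^1_1(r)$-good wellorder of Lemma \ref{Wellorder} to run a Bernstein-type construction against the $\kappa^+$ many $\kappa$-Sacks trees, classifying every real along the way so that both $A$ and its complement are $\SIGMA^1_1(r)$. One bookkeeping slip to fix: when choosing $a_\alpha, b_\alpha$ you must avoid not only $\{a_\beta, b_\beta \mid \beta<\alpha\}$ but also the previously classified $z_\beta$'s, since otherwise $b_\alpha$ could coincide with some $z_\beta$ already placed into $A$, leaving $[T_\alpha]$ without a designated point outside $A$; the cardinality count still goes through after this correction.
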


\begin{proof} This  follows directly from Lemma \ref{folklore}. \end{proof}

Let us define, for $x,y \in \kk$, the \emph{eventual domination} relation: $x <^* y$ iff $\exists \alpha \forall \beta> \alpha \:( x(\beta)<y(\beta))$. We will simply say ``$y$ dominates $x$'' for $x <^* y$ and if $X \subseteq \kk$ we will say ``$y$ dominates $X$'' iff $\forall x \in X \:(x <^* y)$. We will also say ``$y$ is unbounded over $x$'' iff $x \not>^* y$ and ``$y$ is unbounded over $X$'' iff $\forall x \in X \:( x \not>^* y)$. Note that for the next lemma, it is not relevant whether we talk about domination in the space of all elements of $\kk$ or only the strictly increasing ones.

\begin{Lem} \label{Miller} Suppose $\kappa$ is inaccessible. Then $\DELTA^1_1(\IM_\kappa)$ implies that for every $r \in \kk$ there is an $x \in \kuk$ which is {unbounded} over $\kuk \cap L[r]$. \end{Lem}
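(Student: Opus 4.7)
The plan is to prove the contrapositive: assuming there is $r \in \kk$ such that every $x \in \kuk$ is eventually dominated by some $y \in \kuk \cap L[r]$, I will produce a $\DELTA^1_1(r)$ set $A \subseteq \kuk$ that is not $\IM_\kappa$-measurable. This is the natural generalized-setting analogue of the classical Solovay-style characterization for $\DELTA^1_2(\IM)$, with the $\SIGMA^1_1$-good wellorder from Lemma \ref{Wellorder} playing the role of the classical $\SIGMA^1_2$-good wellorder.

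First I collect the combinatorial set-up. Because $\kappa$ is regular uncountable, a standard diagonalization gives $\mathfrak{d}_\kappa > \kappa$, so the dominating assumption forces $|\kuk \cap L[r]| = \kappa^+$ and in particular $(\kappa^+)^{L[r]} = \kappa^+$. Using the $\SIGMA^1_1(r)$-good wellorder $<_{L[r]}$ of Lemma \ref{Wellorder}, I enumerate $\kuk \cap L[r] = \{y_\alpha \mid \alpha < \kappa^+\}$, and by thinning inside $L[r]$ (using GCH there) I may assume the sequence is $\leq^*$-increasing and $\leq^*$-cofinal. Define $\alpha(x) := \min\{\alpha \mid x \leq^* y_\alpha\}$, total by hypothesis. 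Using the predicate $\Psi$ coding $<_{L[r]}$-predecessors, the relation $\alpha(x) = \alpha$ can be rewritten as
$$\exists z \bigl(\Psi(z, y_\alpha) \land x \leq^* y_\alpha \land \forall y' \text{ coded in } z\:(x \not\leq^* y')\bigr),$$
which is $\SIGMA^1_1(r)$. For any $L[r]$-definable $S \subseteq \kappa^+$ with $S$ and $\kappa^+ \setminus S$ both cofinal (e.g.\ the successor ordinals), the set $A_S := \{x \mid \alpha(x) \in S\}$ therefore has both itself and its complement $\SIGMA^1_1(r)$, hence $A_S$ is $\DELTA^1_1(r)$.

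The remaining task is to show that $A := A_S$ meets both sides of every $\kappa$-Miller tree $T$. The starting point is the following observation, which is the Miller-forcing reason the generic is unbounded over the ground model: for every $T \in \IM_\kappa$ and every $\beta < \kappa^+$, there is $x \in [T]$ with $\alpha(x) > \beta$. This is proved by a fusion along the club-splitting spine of $T$, choosing at the $i$-th club-splitting level a successor value exceeding $y_\gamma(i)$ for all $\gamma \leq \beta$ simultaneously; this is possible because the set of admissible successors is a club in $\kappa$ (so unbounded) and only $|\beta|\leq\kappa$ many $y_\gamma$'s need to be dominated at each level, which stays below $\kappa$ by regularity/inaccessibility. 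To conclude, I sharpen this fusion so as to land $\alpha(x)$ in a prescribed member of $S$ (respectively $\kappa^+ \setminus S$): along the club-splitting spine I simultaneously impose $x(i) \leq y_\alpha(i)$ cofinally in $i$ (forcing $x \leq^* y_\alpha$) while diagonalizing against all $y_\gamma$ with $\gamma < \alpha$. Applying this once with $\alpha \in S$ and once with $\alpha \in \kappa^+\setminus S$ yields the required $x,x' \in [T]$. The main obstacle is precisely this simultaneous control: reconciling the upper bound $x(i)\leq y_\alpha(i)$ with the diagonalization against the earlier $y_\gamma$'s requires that at each fusion level the successor club is rich enough to contain values in the window between the current diagonal threshold and $y_\alpha(i)$, and this is where inaccessibility of $\kappa$ is used to keep the diagonal threshold below $y_\alpha(i)$ at cofinally many levels.
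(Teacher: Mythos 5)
Your overall strategy matches the paper's: argue the contrapositive, use the $\SIGMA^1_1$-good wellorder of $L[r]$ to build a $\leq^*$-increasing dominating scale $\langle y_\alpha\rangle$ of length $\kappa^+$, define $\alpha(x)$ as the least index dominating $x$, and split $\kuk$ into two $\SIGMA^1_1$ (hence $\DELTA^1_1$) pieces according to where $\alpha(x)$ lands. The definability computation and the observation that $\{\alpha(x)\mid x\in[T]\}$ is unbounded in $\kappa^+$ (by diagonalizing against $\leq\kappa$ many $y_\gamma$'s along the club-splitting spine, with bookkeeping) are both fine and agree with the paper.

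The gap is in the step you yourself flag as "the main obstacle," and it is not a technicality but the entire difficulty of the Miller case. To put a branch of $T$ into the prescribed piece you need an \emph{upper} bound: some $x\in[T]$ with $x\leq^* y_\alpha$ for a controlled $\alpha$, alongside $x\not\leq^* y_\gamma$ for $\gamma<\alpha$. Your plan to "impose $x(i)\leq y_\alpha(i)$ cofinally in $i$" fails twice over. First, cofinal inequality does not give $x\leq^* y_\alpha$ (eventual domination), so even if achieved it would not pin down $\alpha(x)$. Second, and fatally, it cannot be achieved for an arbitrary $\kappa$-Miller tree: the successor club at a splitting node $t$ of length $i$ may consist entirely of ordinals above $y_\alpha(i)$, and the values along non-splitting segments are dictated by $T$ and may likewise exceed $y_\alpha$ everywhere. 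Indeed one can build $T\in\IM_\kappa$ all of whose branches satisfy $x(i)>y_\alpha(i)$ for every $i$, so no "window" exists at any level, and inaccessibility does not help. Consequently $\{\alpha(x)\mid x\in[T]\}$ is an unbounded set over which you have no control, and nothing prevents it from avoiding your set $S$ (e.g.\ the successor ordinals) entirely on some tree. The paper circumvents this with the machinery you are missing: it assigns to each Miller tree $T$ a code $\varphi_T\in(\kulk)^\kappa$, builds $<\kappa$-branching auxiliary trees $S(\varphi,f,\beta)$ whose branches are bounded by a single function yet cofinally above $f$, proves that $[T]$ meets some $[S(\varphi,f,\beta)]$ whenever $\varphi$ dominates $\varphi_T$, and then constructs the scale $\langle f_\alpha\rangle$ \emph{interleaved} with a dominating family of codes $\langle\varphi_\alpha\rangle$ so that $f_{\alpha+1}$ dominates every $[S(\varphi_\alpha,f_\alpha,\beta)]$. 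The upper bound on the branch thus comes from membership in a ground-model-bounded auxiliary tree chosen \emph{before} $f_{\alpha+1}$, not from direct control inside $T$. Without some substitute for this mechanism, your argument does not go through.
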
 

\begin{proof} The proof is based on the proof of \cite[Theorem 6.1]{BrLo99}. Assuming that there are no unbounded reals over $\kuk \cap L[r]$ we will construct a $\SIGMA^1_1$-definable sequence $\left<f_\alpha \mid \alpha < \kappa^+\right>$ of reals in $L[r]$ which is dominating, well-ordered by $<^*$, and satisfies some additional technical properties. This will yield two non-$\kappa$-Miller-measurable sets $A$ and $B$ defined by $A := \{x \in \kuk \mid$ the least $\alpha$ such that $x \leq^* f_\alpha$ is even$\}$ and $B := \{x \in \kuk \mid$the least $\alpha$ such that $x \leq^* f_\alpha$ is odd$\}$, where, by convention, limit ordinals are considered even.

\p To begin with, we fix an enumeration $\left<\sig_i \mid i < \kappa\right>$ of $\kulk \setminus \{\varnothing\}$. 
Let $\quine{\sigma}$ denote $i$ such that $\sigma = \sig_i$, and also well-order $\kulk\setminus \{\varnothing\}$ by $\preceq$, defined by $\sigma \preceq \tau$ iff $\quine{\sigma} \leq \quine{\tau}$. We also use the following notation: for all $\sigma \in \kulk$ of successor length, let $\sigma(\last)$ denote the last digit of $\sigma$, i.e., $\sigma(|\sigma|-1)$.

\p Next, we define a fixed function $\varphi_0: \kulk \to \kappa$ by letting $\varphi_0(\sigma)$ be the least $i < \kappa$ such that $\sig_i(0)>\sigma(\xi)$ for all $\xi<|\sigma|$. Note that since we only consider strictly increasing $\sigma$, this is equivalent to saying ``$\sig_i(0)>\sigma(\last)$'' whenever $|\sigma|$ is successor. The  function $\varphi_0$ should be understood as a ``lower bound'' on possible other functions $\varphi: \kulk \to \kappa$ satisfying $\sig_{\varphi(\sigma)}(0) > \sigma(\xi)$ for all $\xi<|\sigma|$. 

\p Let $T$ be a given $\kappa$-Miller tree $T$, and assume, without loss of generality, that every splitting node of $T$ is club-splitting. We will recursively define a sequence $\left<\tau^T_\sigma \mid \sigma \in \kulk\right>$, another sequence $\left<\tilde{\tau}^T_\sigma \mid \sigma \in \kulk\right>$ consisting of split-nodes of $T$, and a function $\varphi_T: \kulk \to \kappa$. 

\begin{itemize}
 \item $\tau^T_\varnothing = \tilde{\tau}^T_\varnothing = \sig_i$ for the least $i$ such that $\sig_i \in \Split(T)$.
 \item Assuming $\tau^T_\sigma$ and $\tilde{\tau}^T_\sigma$ are defined, and given a $\beta < \kappa$,  
 let $\tau^T_{\sigma \cc \left<\beta\right>}$ be $\sig_i$ for the least $i$ such that \begin{itemize}
 \item $\tilde{\tau}^T_\sigma \cc \sig_i \in \Split(T)$, and
 \item $\sig_i(0) > \beta$. \end{itemize} 
 Then let $\tilde{\tau}^T_{\sigma \cc \left<\beta\right>} := \tilde{\tau}^T_\sigma \cc \tau^T_{\sigma \cc \left<\beta\right>}$.
 \item For $\sigma$ with $|\sigma| = \lambda$ limit, we first let $\hat{\tau}^T_\sigma := \bigcup \{\tilde{\tau}^T_{\sigma \till \alpha} \mid \alpha<\lambda\}$, then pick $\tau_\sigma$ to be $\sig_i$ for the least $i$, such that \begin{itemize}
 \item $\hat{\tau}^T_\sigma \cc \sig_i \in \Split(T)$, and
 \item $\sig_i(0) >  \sigma(\xi)$ for all $\xi<\lambda$ . \end{itemize}
  (Note that this can be done by the assumption that limits of splitting nodes in $T$ are splitting). Then let $\tilde{\tau}^T_{\sigma} := \hat{\tau}^T_\sigma \cc \tau^T_{\sigma}$. 
 \end{itemize}
 Also for every $\sigma$ we define $\varphi_T(\sigma) := \quine{\tau^T_\sigma}$. 

\p Intuitively, each $\tau^T_\sigma$ gives us a $\preceq$-minimal extension within the tree $T$, whose first digit is strictly higher then the a-priori-prescribed values of $\sigma(\xi)$. Then $\tilde{\tau}^T_\sigma$ is the transfinite concatenation $\tau^T_{\sigma \till 0} \cc \tau^T_{\sigma \till 1} \cc \dots \cc \tau^T_{\sigma \till \xi} \cc \dots \cc \tau^T_{\sigma}$ of the previously selected components. The function $\varphi_T$ then gives the corresponding \emph{code}  of $\tau^T_\sigma$, which will be used as a lower bound later. Notice that for any $\kappa$-Miller tree $T$ we have $\varphi_0 \leq \varphi_T$, and in fact $\varphi_0 = \varphi_{\left(\kulk\right)}$ (i.e., the $\varphi_T$ for $T = \kulk = \mathbf{1}_{\IM_\kappa}$).

\p Next, for a fixed function $f: \kappa \to \kappa$, another function $\varphi: \kulk \to \kappa$ satisfying $\varphi_0 \leq \varphi$, and an ordinal $\beta < \kappa$, we define a special, ${<}\kappa$-branching tree $S(\varphi, f, \beta)$. This tree will  be defined as $\bigcup_{\alpha < \kappa} S_\alpha$, where each $S_\alpha$ satisfies the following two requirements: \begin{enumerate}
\item $|S_\alpha| < \kappa$, and    
\item $\exists \rho \in S_\alpha \:(|\rho| \geq \alpha+1)$. \end{enumerate}

\p We construct the $S_\alpha$ recursively as follows:

\begin{itemize}
\item $S_0$ is the tree generated by $\{\sig_i \mid i \leq \beta\}$.
\item $S_1$ is the tree generated by $$\{\rho \cc \sig_i \mid \rho \in S_0, |\rho| \geq 1, i \leq \varphi(\left<\beta\right>) \text{ and } \sig_i(0) > \beta\}.$$ Notice that since $\varphi_0(\left<\beta\right>) \leq \varphi(\left<\beta\right>)$ there is at least one ``new'' $\sig_i$ satisfying the above requirement, and so there is at least one element of $S_1$ of length $\geq 2$. It is also clear that  $|S_1| < \kappa$.

\item Let $\height(S_1) := \sup\{|\rho| \mid \rho \in S_1\}$ and let $f^*(1) := \sup(\{\beta\} \cup \{f(\xi) \mid \xi < \height(S_1)\})$.  Now let $S_2$ be the tree generated by $$\{\rho \cc \sig_i \mid \rho \in S_1, |\rho| \geq 2,  i \leq \varphi(\left<\beta, f^*(1)\right>) \text{ and } \sig_i(0) > f^*(1)\}.$$ Again notice that since $\varphi_0(\left<\beta,f^*(1)\right>) \leq \varphi(\left<\beta,f^*(1)\right>)$, there exists at least one element of $S_2$ of length $\geq 3$. Also it is clear that $|S_2| < \kappa$.

\item Generally, assume $S_\alpha$ is defined, as well as $f^*(\xi)$ for all $\xi < \alpha$. Let $\height(S_\alpha) := \sup\{|\rho| \mid \rho \in S_\alpha\}$, which is an ordinal $<\kappa$ by the inductive assumption that $|S_\alpha| < \kappa$.  Let  $f^*(\alpha) := \sup(\{\beta\} \cup \{f(\xi) \mid \xi < \height(S_\alpha)\})$ and 
let $S_{\alpha+1}$ be the tree generated by
$$\{\rho \cc \sig_i \mid \rho \in S_\alpha, |\rho| \geq \alpha +1, i \leq \varphi(\left<\beta, f^*(1), \dots, f^*(\alpha)\right>) \text{ and } \sig_i(0) > f^*(\alpha)\}.$$ 
As before, $\varphi_0(\left<\beta, f^*(1), \dots, f^*(\alpha)\right>) \leq \varphi(\left<\beta, f^*(1), \dots, f^*(\alpha)\right>)$ implies that $S_{\alpha+1}$ has at least one element of length $\geq \alpha+2$. Also $|S_{\alpha+1}| < \kappa$ is clear.

\item Suppose $\lambda$ is limit. First define $\hat{S}_\lambda$ to be collection of all cofinal branches through $\bigcup_{\alpha<\lambda} S_\alpha$, i.e., 
$$\hat{S}_\lambda :=  \{\rho \in \kulk \mid \forall \xi < {\rho} \: (\rho \till \xi \in \bigcup_{\alpha<\lambda} S_\alpha)\}.$$ 
Since inductively each $S_\alpha$ has branches of length $\geq \alpha+1$ it follows that $\hat{S}_\lambda$ has at least one cofinal branch. 
Moreover, by the inductive assumption that $|S_\alpha| < \kappa$ for all $\alpha$ and the inaccessibility of $\kappa$ it follows that $|\hat{S}_\lambda| <\kappa$.

\p Next, using the notation
$$\vec{f} \till \lambda := \left<\beta\right> \cc \left<f^*(\xi) \mid 1 \leq \xi < \lambda \right>.$$ we let $S_{\lambda}$ be the tree generated by
$$\{\rho \cc \sig_i \mid \rho \in \hat{S}_\lambda,  i \leq \varphi(\vec{f} \till \lambda) \text{ and } \sig_i(0) \geq \sup(\ran(\vec{f} \till \lambda))\}.$$ Since $\varphi_0(\vec{f} \till \lambda) < \varphi(\vec{f} \till \lambda)$ it again follows that $S_\lambda$ has branches of length $\geq \lambda+1$, and $|S_\lambda| < \kappa$ since $|\hat{S}_\lambda| < \kappa$.
 \end{itemize}

 Finally, we set  $S(\varphi, f, \beta) := \bigcup_{\alpha < \kappa} S_\alpha$. The essential properties of $S(\varphi,f,\beta)$ are summarized in the next sublemma:

\begin{SubLem} $\:$ \
 
\begin{enumerate}
\item Every $S(\varphi,f,\beta)$ is bounded by a function $g \in \kk$ $($i.e., $\forall x \in [S(\varphi,f,\beta)] \; \forall i <\kappa \;((x(i) < g(i)))$.
\item Every $x \in [S(\varphi,f,\beta)]$ is cofinally often above $f$ $($i.e., $x \not<^* f)$.
\item For every $\kappa$-Miller tree $T$, $f$ and $\varphi$ satisfying $\varphi_T <^* \varphi$, there exists $\beta<\kappa$ such that $[T] \cap [S(\varphi,f,\beta)] \neq \varnothing$.
\end{enumerate}

\end{SubLem}

\begin{proof} $\;$ 

\begin{enumerate}
\item Since inductively we know that  $|S_\alpha| <\kappa$ for every $\alpha$, in particular each $S_\alpha$ is ${<}\kappa$-branching (i.e., $\forall \rho \in S_\alpha \;(|\Succ_{S_\alpha}(\rho)| <\kappa$). Moreover, by construction all nodes of length $\leq \alpha$ are contained in $S_\alpha$. Therefore, the full tree $S(\varphi,f,\beta)$ is also only ${<}\kappa$-branching. Now, using the inaccessibility of $\kappa$ it is easy to find a function $g$ such that for all $x \in [S(\varphi,f,\beta)] \: \forall i \:(x(i) < g(i))$.

\item By construction, each $S_{\alpha+1}$ contains only those $\rho \cc \sig_i$ where $\sig_i(0) > f^*(\alpha)$. In particular  $\sig_i(0) > f(|\rho|)$. Therefore $x(\xi) > f(\xi)$ happens cofinally often whenever we pick a branch $x$ through $[S(\varphi,f,\beta)]$.

\item This is the main point of the proof. First, note that since $\varphi_T <^* \varphi$, there are only ${<}\kappa$-many $\sigma$ satisfying $\varphi_T(\sigma) \geq \varphi(\sigma)$. In particular, we can pick $\beta < \kappa$ such that \begin{enumerate}
\item $\varphi_T(\varnothing) < \beta$, and 
\item $\varphi_T(\left<\beta\right> \cc \sigma) < \varphi(\left<\beta\right> \cc \sigma)$ holds for \emph{all} $\sigma$. 
\end{enumerate} 
After $\beta$ has been fixed, the tree $S(\varphi,f,\beta)$ is also fixed. In particular, $f^*$ can be computed from $f$ and the rest of the tree, as it was done in the construction of the $S_\alpha$'s. Let $$\vec{f} := \left<\beta\right> \cc \left<f^*(\alpha) \mid 1 \leq \alpha < \kappa\right>.$$ and for all $\alpha < \kappa$ let
$$\rho_{\alpha} := \tilde{\tau}^T_{\vec{f} \till \alpha}.$$
Then $x := \bigcup_{\alpha<\kappa} \rho_\alpha = \bigcup_{\alpha<\kappa}\tilde{\tau}^T_{\vec{f} \till \alpha}$ is a branch through $[T]$. On the other hand, we claim that $\rho_\alpha \in S_\alpha$ for all $\alpha$:

\begin{itemize}
 \item Since $\varphi_T(\varnothing)<\beta$ and $\quine{\rho_0} =  \quine{\tilde{\tau}^T_\varnothing} = \varphi_T(\varnothing)$, by construction   $\rho_0 \in S_0$.
 \item Since $\varphi_T(\left<\beta\right>) < \varphi(\left<\beta\right>)$,  $\quine{\tau^T_{\left<\beta\right>}} = \varphi_T(\left<\beta\right>)$, $\tau^T_{\left<\beta\right>}(0)>\beta$, and 
 $$\rho_1 = \tilde{\tau}^T_{\left<\beta\right>}  = \tilde{\tau}^T_\varnothing \cc \tau^T_{\left<\beta\right>} = \rho_0 \cc \tau^T_{\left<\beta\right>},$$ by construction $\rho_1 \in S_1$.
\item Assume $\rho_\alpha \in  S_\alpha$. Since $\varphi_T(\vec{f} \till (\alpha+1)) < \varphi(\vec{f}\till (\alpha+1))$, $\quine{\tau^T_{\vec{f} \till (\alpha+1)}} = \varphi_T(\vec{f} \till (\alpha+1))$, $\tau^T_{\vec{f} \till (\alpha+1)}(0) > f^*(\alpha)$ and
$$\rho_{\alpha+1} = \tilde{\tau}^T_{\vec{f} \till (\alpha+1)} = \tilde{\tau}^T_{\vec{f} \till \alpha} \cc  {\tau}^T_{\vec{f} \till (\alpha +1)} = \rho_{\alpha} \cc {\tau}^T_{\vec{f} \till (\alpha+1)},$$ by construction  $\rho_{\alpha+1} \in S_{\alpha+1}$.
\item For limits $\lambda$, first let $\hat{\rho}_\lambda := \bigcup_{\alpha<\lambda} \rho_\alpha$, which is the same as the $\hat{\tau}^T_{\vec{f} \till \lambda}$ in the definition of $\tilde{\tau}^T$ at limit stages. Note that  $\hat{\rho} \in \hat{S_\lambda}$. Then, since 
$\varphi_T(\vec{f} \till \lambda) < \varphi(\vec{f}\till \lambda)$, $\quine{\tau^T_{\vec{f} \till \lambda}} = \varphi_T(\vec{f} \till \lambda)$, $\tau^T_{\vec{f} \till \lambda}(0) \geq \sup(\ran(f \till \lambda))$, and
$$\rho_{\lambda} = \tilde{\tau}^T_{\vec{f} \till \lambda} = \hat{\tau}^T_{\vec{f} \till \lambda} \cc  {\tau}^T_{\vec{f} \till \lambda} = \hat{\rho}_{\lambda} \cc {\tau}^T_{\vec{f} \till \lambda},$$ by construction it follows that $\rho_\lambda \in S_\lambda$.
\end{itemize}
So, $\rho_\alpha \in S_\alpha$ for all $\alpha  <\kappa$, hence $x = \bigcup_{\alpha<\kappa}\rho_\alpha \in [S(\varphi,f,\beta)]$.$\;$

 \hfill \qedhere ~(Sublemma)
 \end{enumerate} \end{proof}

\p To complete the proof of the main lemma, assume, towards contradiction, that $\kuk \cap L[r]$ is a dominating set, for some $r$. Construct a sequence $\left<f_\alpha \mid \alpha<\kappa\right>$ of elements of $\kuk \cap L[a]$, and an auxiliary sequence $\left<\varphi_\alpha \mid \alpha < \kappa\right>$ of elements of $(\kulk)^\kappa \cap L[a]$, in such a way that:

\begin{enumerate}
 \item $\left<f_\alpha \mid \alpha<\kappa\right>$ and  $\left<\varphi_\alpha \mid \alpha < \kappa\right>$ are well-ordered by $<^*$,
 \item  $\left<f_\alpha \mid \alpha<\kappa\right>$ is a dominating subset of $\kuk \cap L[a]$ and  $\left<\varphi_\alpha \mid \alpha < \kappa\right>$  is a dominating subset of $(\kulk)^\kappa \cap L[a]$,
 \item all $\varphi_\alpha$ are strictly above $\varphi_0$, 
 \item $f_{\alpha+1}$ dominates $[S(\varphi_\alpha,f_\alpha, \beta)]$ for all $\beta$, and
 \item both sequences have $\SIGMA^1_1$-definitions.
\end{enumerate}

To see that this can be done, at each step $\alpha$ inductively pick the $<_{L[a]}$-least $f_\alpha$ and $\varphi_\alpha$ dominating all the previous functions; to satisfy point 4 above, use Sublemma (1) to dominate each  $[S(\varphi_\alpha,f_\alpha, \beta)]$ by a corresponding function $g_\beta$, and then dominate $\{g_\beta \mid \beta < \kappa\}$ by another $g$.

\p Now, as suggested earlier, define  $A := \{x \in \kuk \mid$ the least $f_\alpha$ which dominates $x$ is even$\}$ and $B := \{x \in \kuk \mid$ the least $f_\alpha$ which dominates $x$ is odd$\}$. Clearly $A \cap B = \varnothing$, and by assumption $A \cup B = \kuk$. Since the sequence of $f_\alpha$'s was $\SIGMA^1_1$-definable, the sets $A$ and $B$ are also $\SIGMA^1_1$-definable, hence they are both $\DELTA^1_1$. To reach a contradiction, let $T$ be a $\kappa$-Miller tree, and we will show that $[T]$ contains an element in $A$ and an element in $B$. Since the sequence $\left<\varphi_\alpha \mid \alpha<\kappa\right>$ is dominating, there exists an $\alpha$ such that for all $\xi\geq \alpha$ we have $\varphi_T <^* \varphi_\xi$. In particular $\varphi_T <^* \varphi_\alpha$ and  $\varphi_T <^* \varphi_{\alpha+1}$. By point 3 of the Sublemma, we can find $\beta$ and $\beta'$ such that
$$[T] \cap [S(\varphi_\alpha,f_\alpha, \beta)] \neq\varnothing, \text{ and }$$
$$[T] \cap [S(\varphi_{\alpha + 1},f_{\alpha + 1}, \beta')] \neq\varnothing.$$
Without loss of generality $\alpha$ is even. Let $y$ be an element of the first set. By point 2 of the Sublemma, $y \not<^* f_\alpha$, and by construction, $y <^* f_\alpha$. Hence $y \in B$. Likewise, let $y'$ be an element of the second set. Then by an analogous argument $y' \not<^*f_{\alpha+1}$ but $y' <^* f_{\alpha+2}$. Hence $y' \in A$. This completes the proof. \end{proof}

\begin{Question} Can Lemma \ref{Miller} be proved without assuming that $\kappa$ is inaccessible? \end{Question}

So far, these are the only generalizations of classical Solovay-style characterizations known to us. The other result due to Brendle and L\"owe linked Laver-measurability with dominating reals. However, that proof does not seem to generalize to the $\kk$-setting  because $\kappa$-Laver-measurability differs from classical Laver-measurability in the sense that the ideal $\I_\IL$ cannot be neglected (see Lemma \ref{nonideal}). Therefore the following is still open:

\begin{Question} Does $\DELTA^1_1(\IL_\kappa)$ imply that for every $r \in \kk$, there is an $x$ which is {dominating} over $L[r]$? \end{Question}

Likewise, currently we do not have suitable Solovay-style consequences of the assumptions $\DELTA^1_1(\IV_\kappa)$ and $\DELTA^1_1(\IR_\kappa)$. In the classical setting, there is a connection between these properties and splitting/unsplit reals.

\begin{Question} Can the hypotheses $\DELTA^1_1(\IV_\kappa)$ and $\DELTA^1_1(\IR_\kappa)$ be linked  to the existence of $($a suitable generalization of$)$ splitting/unsplit reals? \end{Question}

\medskip \subsection{Comparing $\DELTA^1_1(\IP)$}


The next questions we want to ask are: 
for which $\IP$ and $\IQ$ does $\DELTA^1_1(\IP)$ imply $\DELTA^1_1(\IQ)$, and for which $\IP$ and $\IQ$ can we construct models where $\DELTA^1_1(\IP) + \lnot  \DELTA^1_1(\IQ)$ holds? We will  prove several implications for arbitrary pointclasses $\G$ in Lemma \ref{Implications}. Classical counterparts of such implications are well-known but generally much easier to prove, as the uncountable context provides combinatorial challenges not present when $\kappa=\omega$.

 Separating regularity properties is currently very difficult for the following two reasons:
\begin{enumerate}
\item We do not have good Solovay-style characterizations, and
\item We do not have good preservation theorems for forcing iterations. \end{enumerate} We will finish this section with the only example of such a separation result currently known to us.

\begin{Lem} \label{Implications} Let $\G$ be a class of subsets of $\kk$ or $\dk$ closed under continuous preimages $($in particular $\G = \DELTA^1_1)$. Then \begin{enumerate}
\item $\G(\IM_\kappa) \Rightarrow \G(\IS_\kappa)$.       
\item $\G(\IV_\kappa) \Rightarrow \G(\IS_\kappa)$.    
\item $\G(\IC_\kappa) \Rightarrow \G(\IM_\kappa)$.    
\item $\G(\IL_\kappa) \Rightarrow \G(\IM_\kappa)$.  
\item $\G(\IR_\kappa) \Rightarrow \G(\IM_\kappa)$.    
\item If $\kappa$ is inaccessible, then $\G(\IC_\kappa) \Rightarrow \G(\IV_\kappa)$.    
%
\end{enumerate} \end{Lem}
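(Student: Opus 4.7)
The plan is to prove all six implications uniformly by a continuous-reduction scheme. For each implication $\G(\IP) \Rightarrow \G(\IQ)$ I would proceed as follows: given $A \in \G$ and $T \in \IQ$, exhibit a continuous map $\phi \colon X \to [T]$ where $X$ is a space naturally associated with $\IP$ (either $\kk$, $\dk$, or the body of a distinguished $\IP$-tree). Closure of $\G$ under continuous preimages gives $\phi^{-1}[A] \in \G$; applying $\G(\IP)$ then supplies a $\IP$-tree $U' \subseteq X$ with $[U'] \subseteq^* \phi^{-1}[A]$ or $[U'] \cap \phi^{-1}[A] =^* \varnothing$; and pushing $U'$ forward through $\phi$ should determine a $\IQ$-subtree $S' \leq T$ with the corresponding property, witnessing $\IQ$-measurability of $A$ inside $[T]$.

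The concrete construction of $\phi$ uses the splitting skeleton of the target tree. For (3)--(5), where the target $T_M$ is Miller, I would enumerate the club-splitting nodes of $T_M$ as $\{\tau_\sigma \mid \sigma \in \kulk\}$ in a level- and order-preserving manner, analogous to the enumeration used in the proof of Lemma \ref{Miller}, and define $\phi$ on the source by instructing it to select, at the $i$-th splitting node reached, a successor from the relevant club: for (3) the source is $\kk$ and $\phi$ uses $x(i)$ as the club-index; for (4) the club-splitting of a Laver tree past its stem is sent to the corresponding club of club-successors of $T_M$; for (5) the club $C$ in a Mathias condition $(s,C)$ supplies the indices. For (1) and (2), where the target $T_S$ is Sacks, I would use the dual construction: the binary splitting skeleton of $T_S$ is isomorphic to $\dlk$, and $\phi$, emanating from the body of a Miller (respectively Silver) tree, partitions the $\kappa$-many club-successors (resp. uniform successors) into two classes and projects onto the binary choice inside $T_S$.

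For (6), using inaccessibility of $\kappa$, a Silver condition $f \in \IV_\kappa$ has indeterminacy on a club $C \subseteq \kappa$, and its body is canonically homeomorphic to $2^C \cong \dk$; applying $\G(\IC_\kappa)$ to the pullback of $A$ along this homeomorphism yields a $\kappa$-Cohen basic-open refinement, which fixes only ${<}\kappa$ further coordinates of $f$ and therefore refines $f$ to a genuine Silver condition. Inaccessibility guarantees that $|C|^{<\kappa} = \kappa$, so this identification makes sense. The main obstacles I anticipate are twofold. First, the map $\phi$ must interact correctly with the ideals: $\IQ$-null sets should pull back to $\IP$-null sets and vice versa. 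For the Category 2 forcings $\IS_\kappa$, $\IM_\kappa$, $\IV_\kappa$ this is eased by Lemma \ref{lemmings}(2), which lets us drop the ideal; for Category 1 forcings $\phi$ must in addition be open or at least fiberwise $\IP$-generic, which requires extra care in the level-by-level construction. Second, the inductive definition of $\phi$ at limit ordinals $\lambda < \kappa$ must preserve club-splitting and ${<}\kappa$-closure, so that the putative $S'$ obtained by translating $U'$ is recognized as a genuine $\IQ$-tree in the sense of Definition \ref{arboreal}; here the ``continuous club-splitting'' clause in the definition of $\kappa$-Miller forcing is the main tool, and the parallel splitting-continuity axioms for $\IS_\kappa$ and $\IV_\kappa$ play the analogous role.
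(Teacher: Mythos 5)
Your high-level scheme---reduce along the splitting skeleton of the target tree via a continuous map and push the witness back---is the same as the paper's, and you correctly identify the two danger points (the ideal for the Category 1 forcings, and limit stages). But at each of those points the sketch stops exactly where the proof has to do work, and in one place it asserts something that is not enough. For (6), the claim that applying $\G(\IC_\kappa)$ to the pullback ``yields a basic-open refinement which fixes only ${<}\kappa$ further coordinates and therefore refines $f$ to a genuine Silver condition'' does not prove the implication: the Baire property only gives $[s] \subseteq^* A'$ modulo a meager set, and a meager set can meet the body of \emph{every} Silver subtree of $[s]$. One must actually construct a uniform subtree of $[s]$ avoiding $\kappa$-many nowhere dense sets $X_i$; because of uniformity, killing a single $X_i$ forces you to extend all $2^i$ currently alive branches by one common tail, found by running through them one at a time, and it is here---needing $2^{<\kappa} = \kappa$---that inaccessibility is used, not to make the identification $2^C \cong \dk$ ``make sense.''

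The same issue governs (3)--(5), and your proposed fix (that $\phi$ be ``open or fiberwise generic'') is not the mechanism: the argument works on the pulled-back side, building a Miller tree inside $[s]$ (resp.\ inside the Laver or Mathias condition $R$) by a fusion that disposes of one null set $X_i$ per step. For (4)--(5) this is genuinely delicate because $\I_{\IL_\kappa} \neq \N_{\IL_\kappa}$ (Lemma \ref{nonideal}): the fusion must maintain the invariant that the tree above every later splitting node is still a Laver (Mathias) condition, so that $\IL_\kappa$-null sets can be avoided there, with ${<}\kappa$-closure of $\IL_\kappa$ securing this invariant at limits. Finally, in (1) ``partition the club-successors into two classes'' only works if \emph{both classes are stationary}---otherwise a Miller tree's club-splitting can avoid one class entirely and the image fails to be a splitting node of the putative Sacks tree; moreover the induced map is a homeomorphism of $\kuk$ onto $\dk$ minus the generalized rationals, a $\kappa$-sized set that must be pruned from the image tree afterwards. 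These points are not cosmetic; they are the content of the proof.
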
                                                                                                                                

\begin{proof} $\;$ \

 \begin{enumerate}
  \item Let $A \subseteq \dk$ be a set in $\G$ and let $T$ be a $\kappa$-Sacks tree. We must find a $\kappa$-Sacks tree below $T$ whose branches are completely contained in or disjoint from $A$. Let $\varphi$ be the natural order-preserving bijection identifying $\dlk$ with $\Split(T)$, and $\varphi^*$ the induced homeomorphism between $\dk$ and $[T]$. Further, fix  a stationary, co-stationary set $S \subseteq \kappa$ and enumerate $S := \{\xi_\alpha \mid \alpha < \kappa\}$ and $\kappa \setminus S := \{\eta_\alpha \mid \alpha < \kappa\}$. Let $\psi$ be a map from $\kulk$ to $\dlk$ defined by: 

\begin{itemize}
\item $\psi(\varnothing)=\varnothing$.
\item $\psi(s \cc \left<\alpha\right>) := \left\{ \begin{array}{cc} \psi(s) \cc \left<1\right> \cc 0^{\beta} \cc \left<1\right> & \text{ if } \alpha \in S \text{ and } \alpha = \xi_\beta \\ 
 \psi(s) \cc \left<0\right> \cc 0^{\beta} \cc \left<1\right> & \text{ if } \alpha \notin S \text{ and } \alpha = \eta_\beta  \end{array} \right.$

where $0^\beta$ denotes a $\beta$-sequence of $0$'s.

\item $\psi(s) := \bigcup_{\alpha<\lambda}\psi(s \till \alpha)$, if $|s| = \lambda$ for a limit ordinal.
\end{itemize}
The function $\psi$ is different from a standard encoding of ordinals  by binary sequences, but it is clear that $\psi$ is bijective, since there is an obvious algorithm to compute $\psi^{-1}(s)$ for any $s \in \dlk$. The reason for using this specific function is that we want $\psi(s)$ to be a splitting node whenever $s$ is a club-splitting node. Clearly,  $\psi$ induces a homeomorphism $\psi^*$ between  $\kuk$ and  $\dk \setminus \IQ$, where we use $\IQ$ to denote the \emph{generalized rationals}, i.e., $\IQ := \{x \in \dk \mid |\{i \mid x(i)=1\}|<\kappa\}$. 

\p Let $A' := (\varphi^* \circ \psi^*)^{-1}[A]$, which is in $\G$ by assumption. By $\G(\IM_\kappa)$ we can find a $\kappa$-Miller tree $R$ such that $[R] \subseteq A'$ or $[R] \cap A' = \varnothing$, w.l.o.g. the former. Let $R' := \{\psi(s) \mid s \in R\}$. First, note that $R'$ is a $\kappa$-Sacks tree: this follows because for any $s \in \Split(R)$  there are $\alpha \in S$ and $\beta \notin S$ such that both $s \cc \left<\alpha\right>$ and $s \cc \left<\beta \right>$ are in $R$, which implies that  both $\psi(s) \cc \left<1\right>$ and $\psi(s) \cc \left<0\right>$ are in $R'$, so $\psi(s) \in \Split(R')$. Moreover, since $\psi^*$ is a homeomorphism, we know that $[R'] \setminus \IQ = (\psi^*)$``$[R] \subseteq (\varphi^*)^{-1}[A]$. But since $\IQ$ is a set of size $\kappa$ we can easily find a refinement $R'' \subseteq R'$, which is still a $\kappa$-Sacks tree and moreover $[R''] \subseteq 
(\psi^*)$``$[R] \subseteq (\varphi^*)^{-1}[A]$. Then $(\varphi^*)$``$[R'']$ generates a $\kappa$-Sacks tree which is completely contained in $[T] \cap A$.

\item Let $A \in \G$ and $T \in \IS_\kappa$ and $\varphi$ and $\varphi^*$ be as above. Then $A' := (\varphi^*)^{-1}[A]$ is in $\G$ so there exists a $\kappa$-Silver tree $S$ such that $[S] \subseteq A$ or $[S] \cap A = \varnothing$. As $S$ is a $\kappa$-Sacks tree, clearly $\varphi$``$S$ generates a $\kappa$-Sacks tree below $T$ whose branches are completely contained in or completely disjoint from $A$.

\item Now let $A \subseteq \kuk$ be in $\G$ and let $T$ be a $\kappa$-Miller tree. By shrinking if necessary, we may assume $T$ to have the property that all splitting nodes are club-splitting. Let $\varphi$ be the natural order-preserving bijection between $\kulk$ and $\Split(T)$, and $\varphi^*$ the induced homeomorphism between $\kuk$ and $[T]$. Let $A' := (\varphi^*)^{-1}[A]$. As $A'$ has the Baire property by $\G(\IC_\kappa)$, let $[s]$ be a basic open set such that $[s] \subseteq^* A'$ or $[s] \cap A' =^* \varnothing$, and without loss of generality assume the former. Let $\{X_i \mid i < \kappa\}$ be nowhere dense sets such that $[s] \setminus A' = \bigcup_{i<\kappa}X_i$. We will inductively construct a $\kappa$-Miller tree $S$ such that $[S] \subseteq  A'$ and $[S] \cap X_i = \varnothing$ for all $i<\kappa$. 

\begin{itemize}
\item Let $S_0$ be the tree generated by $\{s\}$.
\item Suppose $S_i$ has been defined for $i<\kappa$. Let $\Term(S_i)$ be the collection of terminal branches of $S_i$ (i.e., those $\sigma \in S_i$ such that $\Succ_{S_i}(\sigma) = \varnothing)$, and for each $\sigma \in \Term(S_i)$ and $\alpha < \kappa$, let $\tau_{\sigma,\alpha}$ be an extension of $\sigma \cc \left<\alpha\right>$ such that $[\tau_{\sigma,\alpha}] \cap X_i = \varnothing$. Now let $S_{i+1}$ be the tree generated by $\{\tau_{\sigma,\alpha} \mid \sigma \in \Term(S_i)$ and $\alpha<\kappa\}$.
\item For limits $\lambda < \kappa$, let $S_\lambda$ be the tree generated by cofinal branches through $\bigcup_{\alpha<\lambda}S_\alpha$.\end{itemize}
By construction, $S := \bigcup_{i<\kappa} S_i$ is a $\kappa$-Miller tree (all splitting nodes of $S$ are in fact fully splitting). Moreover $[S] \subseteq [s]$ and $[S] \cap X_i = \varnothing$ for all $i<\kappa$. In particular, $[S] \subseteq A'$. But now it follows easily that $\varphi$``$S$ generates a $\kappa$-Miller tree below $T$, whose branches are completely contained in $A$.

\item This follows a similar strategy as above, but using the topology generated by $\IL_\kappa$ instead of the standard topology. Let $A \in \kuk$ be in $\G$, $T \in \IM_\kappa$, $\varphi$ and $\varphi^*$ be as above, and let  $A' := (\varphi^*)^{-1}[A]$. As $A'$ is $\IL_\kappa$-measurable, there is a $\kappa$-Laver tree $R$ such that $[R] \subseteq^* A'$ or $[R] \cap A' =^* \varnothing$, where $\subseteq^*$ and $=^*$ means ``modulo $\I_{\IL_\kappa}$''. Without loss of generality assume the former and let $\{X_i \mid i < \kappa\}$ be in $\N_{\IL_\kappa}$  such that $[R] \setminus A' = \bigcup_{i<\kappa}X_i$. Again we will construct a $\kappa$-Miller tree $S$ such that $[S] \subseteq  A'$ and $[S] \cap X_i = \varnothing$ for all $i<\kappa$. 

 We will need to perform a fusion argument on $\IM_\kappa$, so we introduce some terminology. For a $\kappa$-Miller tree $S$, a node $s \in S$ is called an \emph{$i$-th splitting node} iff $s \in \Split(S)$ and the set $\{j < i \mid s \till j \in \Split(S)\}$ has order-type $i$. $\Split_i(S)$ denotes the set of $i$-th splitting nodes of $S$. The standard fusion for $\IM_\kappa$ (cf. Fact \ref{FactusCactus} (2)) is defined by $S' \leq_i S$ iff $S' \leq S$ and $\Split_i(S') = \Split_i(S)$. We will build a fusion sequence $\{S_i \mid i < \kappa\}$ of $\kappa$-Miller trees, but with the following additional property
 $$(*) \;\;\;\;\;\;\; \forall i \: \forall j \geq i \: \forall s \in \Split_j(S_i) \: (S_i \till s \in \IL_\kappa).$$

\begin{itemize}
\item Let $S_0 := R$. 

\item Suppose $S_i$ has been defined for $i<\kappa$. For each $\sigma \in \bigcup\{\Succ_{S_i}(\rho) \mid \rho \in \Split_i(S_i)\}$, we know by the inductive assumption that $ S_i {\thru} \sigma$ is a $\kappa$-Laver tree. So, let $S_{\sigma} \leq S_i {\thru} \sigma$ be a $\kappa$-Laver tree such that $[S_\sigma] \cap X_i = \varnothing$, and then let $$S_{i+1} := \bigcup \{S_\sigma \mid \sigma \in \bigcup\{\Succ_{S_i}(\rho) \mid \rho \in \Split_i(S_i)\}\}.$$ By construction $S_{i+1}$ is a $\kappa$-Miller tree, $S_{i+1} \leq_i S_i$, and condition $(*)$ is satisfied.

\item For limits $\lambda < \kappa$, let $S_\lambda := \bigcap_{i<\lambda}S_i$. By a standard fusion argument, $S_\lambda$ is a $\kappa$-Miller tree and $S_\lambda \leq_i S_i$ for all $i<\lambda$. Also, given any $\sigma \in \Split_{j}(S_\lambda)$, for any $j \geq \lambda$, by condition $(*)$ we inductively know that $S_i \thru \sigma$ is a $\kappa$-Laver tree for all $i< \lambda$. Then $S_\lambda \thru \sigma = \bigcap_{i<\lambda} (S_i \thru \sigma)$, which is a $\kappa$-Laver tree by the ${<}\kappa$-closure of $\IL_\kappa$. Hence $S_\lambda$ satisfies condition $(*)$.

\end{itemize}
By construction, $S := \bigcap_{i<\kappa} S_i$ is a $\kappa$-Miller tree, $[S] \subseteq [R]$, and $[S] \cap X_i = \varnothing$ for all $i<\kappa$. In particular, $[S] \subseteq A'$. Now it follows that $\varphi$``$S$ generates a $\kappa$-Miller tree below $T$, whose branches are completely contained in $A$.

\item This part is completely analogous to 4. Note that $\kappa$-Mathias conditions are special kinds of $\kappa$-Laver trees, and $\IR_\kappa$ is also ${<}\kappa$-closed. 

\item  Here it is easier to consider $\IC_\kappa$  on $\dk$ as opposed to $\kk$. It is not hard to see that the two properties are equivalent for $\G$. Let $A \subseteq \dk$ be in $\G$, let $T \in \IV_\kappa$, let $\varphi$ be the natural order-preserving bijection between $\dk$ and the splitnodes of $T$, and let $\varphi^*$ be the induced homeomorphism between $\dk$ and $[T]$. Let $A' := (\varphi^*)^{-1}[A]$, and using $\G(\IC_\kappa)$ let $s \in \dlk$ be such that $[s] \subseteq^* A'$ or $[s] \cap A' =^* \varnothing$, without loss of generality the former. Let $X_i$ be nowhere dense such that $[s] \setminus A' = \bigcup_{i<\kappa} X_i$. As before, we will inductively construct a $\kappa$-Silver tree $S$ such that $[S] \subseteq [s]$ and $[S] \cap X_i = \varnothing$ for all $i$. 

\p In this construction, it will be easier to view $\kappa$-Silver conditions as functions from $\kappa$ to $\{0,1,\{0,1\}\}$. We will use the following notation: for $f: \alpha \to \{0,1, \{0,1\}\}$ let $$[f] := \{x \in 2^{\alpha} \mid \forall i \:(f(i)\ \in \{0,1\} \to x(i)=f(i))\}.$$ Notice that if $f: \kappa \to \{0,1,\{0,1\}\}$ and $f(i) = \{0,1\}$ for club-many $i$, then the corresponding $\kappa$-Silver tree can be defined as $S_f := \{\sigma \in \dlk \mid \sigma \in [f \till |\sigma|]\}$, and we have $[S_f] = [f]$. We will construct a function $f$ as the limit of $f_\alpha$'s, defined as follows:

\begin{itemize}
\item $f_0 := s$.
\item Since $X_0$ is nowhere dense, let $\tau_1$ be such that $[s \cc \left<0\right> \cc \tau_1] \cap X_0 = \varnothing$. Then let $\tau_2 \supseteq \tau_1$ be such that $[s \cc \left<0\right> \cc \tau_2] \cap X_0 = \varnothing$. Now set 
$$f_1 := s \cc \left<\{0,1\}\right> \cc \tau_1.$$ Notice that for any $x \in \dk$ extending any $\sigma \in [f_1]$ we have $x \notin X_0$.

\item Suppose $f_i$ is defined for $i<\kappa$. Let $\{\sigma_\alpha \mid \alpha < 2^i\}$ enumerate all sequences in $[f_i \cc \left<\{0,1\}\right>]$ and define $\{\tau_\alpha \mid \alpha<2^i\}$ by induction as follows:
\begin{itemize}
\item $\tau_0 = \varnothing$.
\item If $\tau_\alpha$ is defined let $\tau_{\alpha+1} \supseteq \tau_\alpha$ be such that $[\sigma_\alpha \cc \tau_{\alpha+1}] \cap X_i = \varnothing$.
\item For limits $\lambda$ let $\tau_{\lambda} := \bigcup_{\alpha<\lambda} \tau_\alpha$. 
\end{itemize}
Then define $\tau_{2^i} := \bigcup_{\alpha<2^i}\tau_\alpha$ and notice that $\tau_{2^i} \in 2^{\delta}$ for $\delta<\kappa$ since $\kappa$ was inaccessible. Now let
$$f_{i+1} := f_i \cc \left<\{0,1\}\right> \cc \tau_{2^i}.$$ It is clear that any $x \in \dk$ extending any $\sigma \in [f_{i+1}]$ is not in $X_i$.

\item For $\gamma$ limit, let $f_\gamma := \bigcup_{i<\gamma}f_i$.
\end{itemize}

Finally, we let $f := \bigcup_{i<\kappa} f_i$. By construction  $f(i) = \{0,1\}$ for club-many $i < \kappa$, and clearly every $x \in [f]$ is not in $X_i$ for any $i < \kappa$. Hence  $S_f := \{\sigma \in \dlk \mid \sigma \in [f \till |\sigma|]\}$ is a $\kappa$-Silver tree with $[S_f] \subseteq A'$. Then $\varphi$``$S_f$ generates a $\kappa$-Silver subtree of $T$ which is completely contained in $A$, as had to be shown. \qedhere
 \end{enumerate}
\end{proof}

Focusing on $\G = \DELTA^1_1$, we can summarize the contents of the above results in Figure \ref{Cichon}.\footnote{We arrange the diagram in this particular way in order to be consistent with previous presentations of similar diagrams, e.g. in \cite{CichonPaper}.} Of particular interest are two implications which are present in the classical setting but still seem open in the general setting:

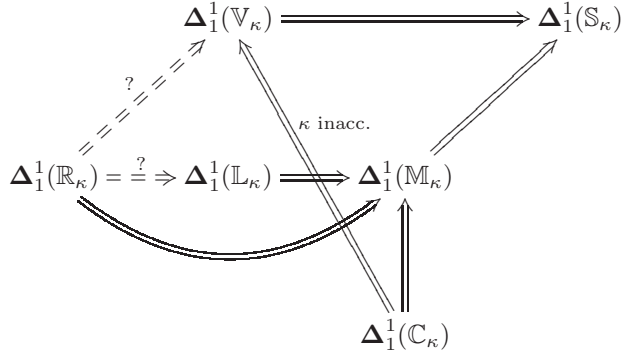
\begin{figure}[here] 
$$
\xymatrix@C=0.9cm@R=1.5cm{
    &  \DELTA^1_1(\IV_\kappa)\ar@{=>}[rr] & & \DELTA^1_1(\IS_\kappa) \\
   \DELTA^1_1(\IR_\kappa)  \ar@{==>}^{?}[r] \ar@{==>}^{?}[ur] \ar@{=>}@/_30pt/[rr]& \DELTA^1_1(\IL_\kappa) \ar@{=>}[r] & \DELTA^1_1(\IM_\kappa) \ar@{=>}[ru] \\
    & & \DELTA^1_1(\IC_\kappa) \ar@{=>}_(.65){\kappa \text{ inacc.}}[luu] \ar@{=>}[u] 
 }
$$\caption{Diagram of implications for $\DELTA^1_1$ .}
\label{Cichon}\end{figure}

\begin{Question} Is $\DELTA^1_1(\IR_\kappa) \Rightarrow \DELTA^1_1(\IL_\kappa)$ true? Is $\DELTA^1_1(\IR_\kappa) \Rightarrow \DELTA^1_1(\IV_\kappa)$ $($at least for $\kappa$ inaccessible$)$ true? \end{Question}
As mentioned, currently we can prove only the following separation theorem.

\begin{Thm} Suppose $\kappa$ is inaccessible. Then it is consistent that $\DELTA^1_1(\IV_\kappa)$ and $\DELTA^1_1(\IS_\kappa)$ hold  whereas $\DELTA^1_1(\IR_\kappa)$, $\DELTA^1_1(\IL_\kappa)$, $\DELTA^1_1(\IC_\kappa)$ and $\DELTA^1_1(\IM_\kappa)$ fail. \end{Thm}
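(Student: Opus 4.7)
The plan is to force over $V = L$ with $\IP := \IV_\kappa^{\kappa^+}$, the $\kappa^+$-length iteration of $\kappa$-Silver forcing with supports of size ${\leq}\kappa$. By the Corollary following Theorem \ref{theorem} (which relies on the adaptation of \cite[Theorem~6.1]{KanamoriSacks} to $\IV_\kappa$), this iteration is $\kappa$-proper, so in particular every new element of $\kk$ appears at some initial stage $\IP_\alpha$ with $\alpha < \kappa^+$. Since $\IV_\kappa$ is ${<}\kappa$-closed and satisfies Axiom A$^*$ for inaccessible $\kappa$, Theorem \ref{theorem}(2) gives $L^\IP \models \DELTA^1_1(\IV_\kappa)$, and Lemma \ref{Implications}(2) then yields $L^\IP \models \DELTA^1_1(\IS_\kappa)$ as well.

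For the failures, Lemma \ref{Implications}(4) and (5) reduce $\lnot \DELTA^1_1(\IL_\kappa)$ and $\lnot \DELTA^1_1(\IR_\kappa)$ to $\lnot \DELTA^1_1(\IM_\kappa)$, so only $\DELTA^1_1(\IC_\kappa)$ and $\DELTA^1_1(\IM_\kappa)$ need to be refuted in $L^\IP$. By the contrapositives of the two Solovay-style lemmas in Section \ref{4Solovay} (applied with $r = \varnothing$), this reduces to establishing the following preservation properties of $\IP$:
\begin{enumerate}
\item[(i)] $\IP$ adds no $\kappa$-Cohen real over $L$;
\item[(ii)] $\IP$ is $\kk$-bounding, i.e.\ every $x \in \kk \cap L^\IP$ is dominated by some $y \in \kk \cap L$.
\end{enumerate}

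At the single-step level, both properties come from standard fusion arguments on $\IV_\kappa$, adapted from the classical Silver case. Viewing a condition $T \in \IV_\kappa$ as a function $f \colon \kappa \to \{0,1,\{0,1\}\}$ with $f^{-1}(\{0,1\})$ containing a club, a fusion of length $\kappa$ along the free coordinates of $f$ produces, for any name $\dot{x}$ for an element of $\dk$, a condition $T' \leq T$, a club $C \in L$, and $v \in L$ such that $T' \Vdash \dot{x} \upharpoonright \check C = \check v$. The closed set $\{y \in \dk \mid y \upharpoonright C = v\}$ is nowhere dense and coded in $L$, so no interpretation of $\dot{x}$ can be $\kappa$-Cohen over $L$. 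For (ii), the analogous fusion uses the inaccessibility of $\kappa$ to keep the ${<}\kappa$-many candidate values of $\dot{g}(i)$ at the $i$-th splitting level bounded below $\kappa$; amalgamating these bounds yields $T' \leq T$ and $h \in \kk \cap L$ with $T' \Vdash \forall i\; \dot{g}(i) < \check h(i)$.

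The main obstacle is lifting (i) and (ii) to the ${\leq}\kappa$-supported iteration $\IP$. What is needed is the exact analogue of Kanamori's preservation theorem for iterated $\kappa$-Sacks forcing (\cite[\S 6]{KanamoriSacks}; compare Fact \ref{FactusCactus}(1)(c)), adapted to $\kappa$-Silver in the same way as the $\kappa$-properness result invoked in the first paragraph. Once this fusion-and-amalgamation machinery is in place, both ``no new $\kappa$-Cohens over $L$'' and ``$\kk$-bounding'' survive through $\IP$, yielding the failures of $\DELTA^1_1(\IC_\kappa)$ and $\DELTA^1_1(\IM_\kappa)$ in $L^\IP$, and the theorem follows.
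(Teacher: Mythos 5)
Your argument is correct and is essentially the paper's own proof: force with the $\kappa^+$-iteration of $\IV_\kappa$ over $L$, obtain $\DELTA^1_1(\IV_\kappa)$ (hence $\DELTA^1_1(\IS_\kappa)$) from Theorem \ref{theorem} together with Lemma \ref{Implications}, and refute $\DELTA^1_1(\IM_\kappa)$ by showing the iteration is $\kk$-bounding (via the adaptation of Kanamori's fusion argument) and applying the contrapositive of Lemma \ref{Miller}. The only difference is that your separate treatment of $\IC_\kappa$ through ``no $\kappa$-Cohen reals over $L$'' is redundant: Lemma \ref{Implications}(3) gives $\DELTA^1_1(\IC_\kappa) \Rightarrow \DELTA^1_1(\IM_\kappa)$, so $\lnot\DELTA^1_1(\IM_\kappa)$ already disposes of the Cohen case (and in any event a $\kappa$-Cohen real over $L$ is unbounded over $\kk \cap L$, so $\kk$-bounding alone would rule it out without a second fusion argument).
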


\begin{proof} It is sufficient to establish $\DELTA^1_1(\IV_\kappa) + \lnot \DELTA^1_1(\IM_\kappa)$. Perform a $\kappa^+$-iteration of $\kappa$-Silver forcing, starting in $L$, with supports of size $\kappa$. By Theorem \ref{theorem} we know that $\DELTA^1_1(\IV_\kappa)$ holds in the extension. Also, an argument completely analogous to \cite[Theorem 6.1]{KanamoriSacks} shows that this iteration is  $\kk$-\emph{bounding}, i.e., every function $f \in \kk$ in the extension is dominated by a $g \in \kk$ in the ground model. As a result, the generic extension does not satisfy the statement ``$\forall r \: \exists x \:(x $ is dominating over $\kk \cap L[r])$'', so by Lemma \ref{Miller} $\DELTA^1_1(\IM_\kappa)$ fails. \end{proof}

Notice that by Remark \ref{RemarkMixing} and Lemma \ref{Implications} we can obtain $\DELTA^1_1(\IP)$ for all $\IP \in \{\IC_\kappa, \IS_\kappa, \IM_\kappa, \IL_\kappa, \IR_\kappa\}$, and also for $\IP = \IV_\kappa$ if $\kappa$ is inaccessible, simultaneously in one model, namely $L^{(\IC_\kappa * \IL_\kappa * \IR_\kappa)_{\omega_1}}$.

\section{Open Questions} \label{5}

We have carried out an initial study of regularity properties related to forcing notions on the generalized reals; but many questions remain open, particularly with regard to the specific examples presented in Section \ref{4}.

\begin{Question} $\;$ 

\begin{enumerate} 
\item Can Lemma \ref{Miller} be proved without assuming that $\kappa$ is inaccessible?
\item Does $\DELTA^1_1(\IL_\kappa)$ imply that for every $r \in \kk$, there is an $x$ which is {dominating} over $L[r]$? 
\item  Can the hypotheses $\DELTA^1_1(\IV_\kappa)$ and $\DELTA^1_1(\IR_\kappa)$ be linked  to the existence of $($a suitable generalization of$)$ splitting/unsplit reals? 
\end{enumerate} \end{Question}

A more long-term goal would be to find a complete diagram of implications for generalized $\DELTA^1_1$ sets.
\begin{Question} Which additional implications from Figure \ref{Cichon} can be proved in ZFC? Which are consistently false? Specifically, does $\DELTA^1_1(\IR_\kappa) \Rightarrow \DELTA^1_1(\IL_\kappa)$ and $\DELTA^1_1(\IR_\kappa) \Rightarrow \DELTA^1_1(\IV_\kappa)$ $($at least for $\kappa$ inaccessible$)$ hold?\end{Question}

%
%

In a more conceptual direction, one should try to better understand the exact role of the club filter, which provides counterexamples for $\SIGMA^1_1$-regularity. For example, perhaps one could prove that the club filter, up to some adequate notion of equivalence, is the \emph{only} $\SIGMA^1_1$-counterexample. Alternatively, one could try to focus on regularity properties such as the ones considered in \cite{SchlichtPSP, LaguzziGeneral}, and try to gain a better understanding why the club filter is a counterexample for some regularity properties but not for others. For example, by recent results of Laguzzi and the first author, projective measurability is consistent for a version of Silver forcing  in which the splitting levels occur on a normal measure on $\kappa$ as opposed to the club filter.

\section*{References}

\bibliography{../../10_Bibliography/Khomskii_Master_Bibliography}{}

\begin{thebibliography}{10}
\expandafter\ifx\csname url\endcsname\relax
  \def\url#1{\texttt{#1}}\fi
\expandafter\ifx\csname urlprefix\endcsname\relax\def\urlprefix{URL }\fi
\expandafter\ifx\csname href\endcsname\relax
  \def\href#1#2{#2} \def\path#1{#1}\fi

\bibitem{VaananenTrees}
A.~Mekler, J.~V{\"a}{\"a}n{\"a}nen,
  \href{http://dx.doi.org/10.2307/2275112}{Trees and {$\Pi^1_1$}-subsets of
  {$^{\omega_1}\omega_1$}}, J. Symbolic Logic 58~(3) (1993) 1052--1070.
\newblock \href {http://dx.doi.org/10.2307/2275112}
  {\path{doi:10.2307/2275112}}.
\newline\urlprefix\url{http://dx.doi.org/10.2307/2275112}

\bibitem{VaananenModelsGames}
J.~V{\"a}{\"a}n{\"a}nen, Models and Games, Vol. 132 of Cambridge Studies in
  Advanced Mathematics, 2011.

\bibitem{Ik10}
D.~Ikegami, \href{http://dx.doi.org/10.1016/j.apal.2009.10.005}{Forcing
  absoluteness and regularity properties}, Ann. Pure Appl. Logic 161~(7) (2010)
  879--894.
\newblock \href {http://dx.doi.org/10.1016/j.apal.2009.10.005}
  {\path{doi:10.1016/j.apal.2009.10.005}}.
\newline\urlprefix\url{http://dx.doi.org/10.1016/j.apal.2009.10.005}

\bibitem{JSDelta12}
J.~I. Ihoda, S.~Shelah,
  \href{http://dx.doi.org/10.1016/0168-0072(89)90016-X}{{$\Delta^1_2$}-sets of
  reals}, Ann. Pure Appl. Logic 42~(3) (1989) 207--223.
\newblock \href {http://dx.doi.org/10.1016/0168-0072(89)90016-X}
  {\path{doi:10.1016/0168-0072(89)90016-X}}.
\newline\urlprefix\url{http://dx.doi.org/10.1016/0168-0072(89)90016-X}

\bibitem{BrLo99}
J.~Brendle, B.~L{\"o}we, \href{http://dx.doi.org/10.2307/2586632}{Solovay-type
  characterizations for forcing-algebras}, J. Symbolic Logic 64~(3) (1999)
  1307--1323.
\newblock \href {http://dx.doi.org/10.2307/2586632}
  {\path{doi:10.2307/2586632}}.
\newline\urlprefix\url{http://dx.doi.org/10.2307/2586632}

\bibitem{BrHaLo}
J.~Brendle, L.~Halbeisen, B.~L{\"o}we,
  \href{http://dx.doi.org/10.1017/S0305004104008187}{Silver measurability and
  its relation to other regularity properties}, Math. Proc. Cambridge Philos.
  Soc. 138~(1) (2005) 135--149.
\newblock \href {http://dx.doi.org/10.1017/S0305004104008187}
  {\path{doi:10.1017/S0305004104008187}}.
\newline\urlprefix\url{http://dx.doi.org/10.1017/S0305004104008187}

\bibitem{KhomskiiThesis}
Y.~Khomskii, Regularity properties and definability in the real number
  continuum., Ph.D. thesis, University of Amsterdam, {ILLC} Dissertations
  DS-2012-04 (2012).

\bibitem{SolovayAModel}
R.~M. Solovay, A model of set-theory in which every set of reals is {L}ebesgue
  measurable, Ann. of Math. (2) 92 (1970) 1--56.

\bibitem{BaJu95}
T.~Bartoszy\'nski, H.~Judah, Set Theory, On the Structure of the Real Line, A\
  K\ Peters, 1995.

\bibitem{FriedmanSchrittesser}
S.-D. Friedman, D.~Schrittesser, Projective measure without projective {B}aire,
  preprint (March 2013).

\bibitem{CichonPaper}
V.~Fischer, S.~D. Friedman, Y.~Khomskii, Cicho\'{n}'s diagram, regularity
  properties and {$\DELTA^1_3$} sets of reals, {Arch. Math. Logic}, \emph{to
  appear}.

\bibitem{HalkoShelah}
A.~Halko, S.~Shelah, \href{http://dx.doi.org/10.4064/fm170-3-1}{On strong
  measure zero subsets of {$^\kappa2$}}, Fund. Math. 170~(3) (2001) 219--229.
\newblock \href {http://dx.doi.org/10.4064/fm170-3-1}
  {\path{doi:10.4064/fm170-3-1}}.
\newline\urlprefix\url{http://dx.doi.org/10.4064/fm170-3-1}

\bibitem{SchlichtPSP}
P.~Schlicht, Perfect subsets of generalized {B}aire spaces and {B}anach-{M}azur
  games, preprint (2013).

\bibitem{LaguzziGeneral}
G.~Laguzzi, Generalized {S}ilver and {M}iller measurability, {Arch. Math.
  Logic}, \emph{to appear}.

\bibitem{FriedmanHyttinenKulikov}
S.~D. Friedman, T.~Hyttinen, V.~Kulikov, Generalized descriptive set theory and
  classification theory, Mem. Amer. Math. Soc. 230~(1081) (July 2014) v--80.
\newblock \href {http://dx.doi.org/10.1090/memo/1081}
  {\path{doi:10.1090/memo/1081}}.

\bibitem{CanaryTree}
A.~H. Mekler, S.~Shelah, \href{http://dx.doi.org/10.4153/CMB-1993-030-6}{The
  canary tree}, Canad. Math. Bull. 36~(2) (1993) 209--215.
\newblock \href {http://dx.doi.org/10.4153/CMB-1993-030-6}
  {\path{doi:10.4153/CMB-1993-030-6}}.
\newline\urlprefix\url{http://dx.doi.org/10.4153/CMB-1993-030-6}

\bibitem{ClubDelta}
S.~D. Friedman, L.~Wu, L.~Zdomskyy, ${\Delta}_1$-definability of the
  non-stationary ideal at successor cardinals, {Fund. Math.}, \emph{to appear}.

\bibitem{ShelahNotCollapsing}
S.~Shelah, \href{http://dx.doi.org/10.1007/BF02807192}{Not collapsing cardinals
  {$\leq\kappa$} in {$(<\kappa)$}-support iterations}, Israel J. Math. 136
  (2003) 29--115.
\newblock \href {http://dx.doi.org/10.1007/BF02807192}
  {\path{doi:10.1007/BF02807192}}.
\newline\urlprefix\url{http://dx.doi.org/10.1007/BF02807192}

\bibitem{RoslanowskiShelahMore}
A.~Ros{\l}anowski, S.~Shelah,
  \href{http://dx.doi.org/10.1007/s00153-013-0334-y}{More about
  {$\lambda$}-support iterations of {$(<\lambda)$}-complete forcing notions},
  Arch. Math. Logic 52~(5-6) (2013) 603--629.
\newblock \href {http://dx.doi.org/10.1007/s00153-013-0334-y}
  {\path{doi:10.1007/s00153-013-0334-y}}.
\newline\urlprefix\url{http://dx.doi.org/10.1007/s00153-013-0334-y}

\bibitem{FriedmanHonzikZdomskyy}
S.-D. Friedman, R.~Honzik, L.~Zdomskyy,
  \href{http://dx.doi.org/10.1016/j.apal.2013.06.011}{Fusion and large cardinal
  preservation}, Ann. Pure Appl. Logic 164~(12) (2013) 1247--1273.
\newblock \href {http://dx.doi.org/10.1016/j.apal.2013.06.011}
  {\path{doi:10.1016/j.apal.2013.06.011}}.
\newline\urlprefix\url{http://dx.doi.org/10.1016/j.apal.2013.06.011}

\bibitem{RoslanowskiSearch}
A.~Ros{\l}anowski, Shelah's search for properness for iterations with
  uncountable supports, talk at MAMLS at RU. Available at
  \texttt{http://www.unomaha.edu/logic/papers/essay.pdf}.

\bibitem{KanamoriSacks}
A.~Kanamori, \href{http://dx.doi.org/10.1016/0003-4843(80)90021-2}{Perfect-set
  forcing for uncountable cardinals}, Ann. Math. Logic 19~(1-2) (1980) 97--114.
\newblock \href {http://dx.doi.org/10.1016/0003-4843(80)90021-2}
  {\path{doi:10.1016/0003-4843(80)90021-2}}.
\newline\urlprefix\url{http://dx.doi.org/10.1016/0003-4843(80)90021-2}

\bibitem{FriedmanZdomskyyMiller}
S.-D. Friedman, L.~Zdomskyy,
  \href{http://dx.doi.org/10.4064/fm207-2-1}{Measurable cardinals and the
  cofinality of the symmetric group}, Fund. Math. 207~(2) (2010) 101--122.
\newblock \href {http://dx.doi.org/10.4064/fm207-2-1}
  {\path{doi:10.4064/fm207-2-1}}.
\newline\urlprefix\url{http://dx.doi.org/10.4064/fm207-2-1}

\bibitem{OtherMiller}
E.~T. Brown, M.~J. Groszek,
  \href{http://dx.doi.org/10.1016/j.apal.2006.05.012}{Uncountable superperfect
  forcing and minimality}, Ann. Pure Appl. Logic 144~(1-3) (2006) 73--82.
\newblock \href {http://dx.doi.org/10.1016/j.apal.2006.05.012}
  {\path{doi:10.1016/j.apal.2006.05.012}}.
\newline\urlprefix\url{http://dx.doi.org/10.1016/j.apal.2006.05.012}

\bibitem{LaguzziThesis}
G.~Laguzzi, Arboreal forcing notions and regularity properties of the real
  line, Ph.D. thesis, University of Vienna (2012).

\bibitem{Kechris}
A.~S. Kechris, Classical descriptive set theory, Vol. 156 of Graduate Texts in
  Mathematics, Springer-Verlag, New York, 1995.

\end{thebibliography}

\end{document}